\newtheorem{trm}{Theorem}[section]
\newtheorem{lem}[trm]{Lemma}
\newtheorem{prop}[trm]{Proposition}
\newtheorem{cor}[trm]{Corollary}
\theoremstyle{definition}
\newtheorem{de}[trm]{Definition}
\newtheorem{rem}[trm]{Remark}
\newcommand{\LL}{\mathcal A}
\newcommand{\A}{\mathcal{L}}
\newcommand{\ran}{\operatorname{ran}}
\newcommand{\la}{\langle}
\newcommand{\ra}{\rangle}
\renewcommand{\d}{\,\mathrm{d}}
\newcommand{\e}{\mathrm{e}}
\renewcommand{\theta}{\vartheta}
\newcommand{\R}{\mathbb{R}}
\newcommand{\aq}{\Leftrightarrow}
\newcommand{\N}{\mathbb{N}}
\newcommand{\HH}{\mathcal H}
\newcommand{\inj}{\hookrightarrow}
\newcommand{\beq}{\begin{equation}}
\newcommand{\eeq}{\end{equation}}
\newcommand{\rom}[1]{\textrm{\tiny{\uppercase\expandafter{\romannumeral #1}}}}
\newcommand{\hess}{\operatorname{Hess}}
\begin{document}

\author[M. Mileti\'c]{Maja Mileti\'c} \address{Institute for Analysis and
 Scientific Computing, Technical University Vienna, Wiedner
 Hauptstra\ss{}e 8, 1040 Vienna, Austria}

\author[D. St\"urzer]{Dominik St\"urzer} \address{Institute for Analysis and
 Scientific Computing, Technical University Vienna, Wiedner
 Hauptstra\ss{}e 8, 1040 Vienna, Austria}

\author[A. Arnold]{Anton Arnold} \address{Institute for Analysis and
 Scientific Computing, Technical University Vienna, Wiedner
 Hauptstra\ss{}e 8, 1040 Vienna, Austria}
 
 \author[A. Kugi]{Andreas Kugi} \address{Automation and Control Institute, Technical University Vienna , Gu\ss{}hausstra\ss{}e 27-29, 1040 Wien, Austria}

\title[Stability of an EBB with a nonlinear dynamic feedback system]{Stability of an Euler-Bernoulli beam with a nonlinear dynamic feedback system}

\begin{abstract}
This paper is concerned with the stability analysis of a lossless Euler-Bernoulli beam that carries a tip payload which is coupled to a nonlinear dynamic feedback system. This setup comprises nonlinear dynamic boundary controllers satisfying the nonlinear KYP lemma as well as the interaction with a nonlinear passive environment. Global-in-time wellposedness and asymptotic stability is rigorously proven for the resulting closed-loop PDE--ODE system. The analysis is based on semigroup theory for the corresponding first order evolution problem. For the large-time analysis, precompactness of the trajectories is shown by deriving uniform-in-time bounds on the solution and its time derivatives.
\end{abstract}

\maketitle

\section{Introduction}

Let us consider a linear homogeneous Euler-Bernoulli beam, clamped at one end and with tip mass at the other free end. The state of the beam at time $t$ is described by its transverse deflection $u(t,x)$ from the zero-state, where
$x\in\lbrack0,L]$ is the longitudinal coordinate of the beam, see Figure~\ref{beam}. The well known PDE for the motion of the beam reads as
\begin{equation} \label{beam_PDE}
\rho u_{tt}(t,x) + \Lambda u^{\rom{4}}(t,x)  = 0,
\end{equation}
with the mass per unit length $\rho$ and the flexural rigidity $\Lambda$. The boundary conditions for the clamped end at $x=0$ are given by
\begin{equation} \label{beam_BC1}
u(t,0)=u'(t,0)=0,
\end{equation}
and for the free end at $x=L$, we have
\begin{subequations} \label{beam_BC2}
\begin{align}
Ju_{tt}'(t,L)+\Lambda u^{\prime\prime}(t,L) & =-\tau_{e}  \label{beam_BC2_a}\\
Mu_{tt}(t,L)-\Lambda u^{\prime\prime\prime}(t,L) & =-f_{e},  \label{beam_BC2_b}
\end{align}
\end{subequations}
where $J$ and $M$ denote the mass moment of inertia and the mass of the tip mass, respectively, and $-\tau_{e}$ and $-f_{e}$ describe the external torque and force acting on the tip mass. Here and in the following, the notation $u_{t}$ is used for the derivative with respect to the time variable $t$, and $u'$ for the $x$-derivative.

In literature, there exists a number of contributions dealing with the design of boundary controllers to stabilize this type of system. To mention but a few, in \cite{littman} the asymptotic stability was shown using semigroup formulation and applying the La Salle Invariance principle. To obtain stronger, exponential stability, frequency domain criteria \cite{Morgul2001}, Riesz basis property \cite{guo2002riesz}, \cite{guo2006riesz} or energy multiplier methods \cite{rao1995uniform}, \cite{conrad1998stabilization} were employed. In contrast to these works, which are mainly based on linear static and dynamic boundary controllers, this paper is concerned with the interaction of the Euler-Bernoulli beam (\ref{beam_PDE}) - (\ref{beam_BC2})  with a finite-dimensional nonlinear dynamic system. In particular, it is assumed that this system generates a reaction torque $\tau_{e}=\tau_{e,1}+\tau_{e,2}$ and a reaction force $f_{e}=f_{e,1}+f_{e,2}$, respectively. The reaction torque and force is composed of the response of a nonlinear spring-damper system
\begin{subequations} \label{env_BC_SD}
\begin{align}
\tau_{e,1}  & =d_{1}(u_{t}'(t,L))+k_{1}(u'(t,L)) \label{env_BC_SD1}\\
f_{e,1}  & =d_{2}(u_{t}(t,L))+k_{2}(u(t,L))\label{env_BC_SD2}
\end{align}
\end{subequations}
and the response of a finite-dimensional nonlinear system with state $z_{j} \in\mathbb{R}^{n_{j}},$ $j=1,2$,
\begin{subequations} \label{env_BC1}
\begin{align}
(z_{1})_{t}  & =a_{1}(z_{1})+b_{1}(z_{1})u_{t}'(t,L) \label{EBB_4}\\
\tau_{e,2}  & =c_{1}(z_{1})\end{align}
\end{subequations}
and
\begin{subequations} \label{env_BC2}
\begin{align}
(z_{2})_{t}  & =a_{2}(z_{2})+b_{2}(z_{2})u_{t}(t,L) \label{EBB_5}\\
f_{e,2}  & =c_{2}(z_{2}),
\end{align}
\end{subequations}
which constitutes a strictly passive map from the time derivative of the tip angle $u_{t}'(t,L)$ to the reaction torque $\tau_{e,2}$ and from the velocity of the tip position $u_{t}(t,L)$ to the reaction force $f_{e,2}$,
respectively. The functions $a_{j}$, $b_{j}$, $c_{j}$, $d_{j}$, and $k_{j}$, $j=1,2$ as well as their mathematical properties will be specified in detail in the next section. 

The motivation for the setup (\ref{beam_PDE}) - (\ref{env_BC2}) is as follows: In literature, when designing a boundary controller for the system (\ref{beam_PDE}) - (\ref{beam_BC2}), it is usually assumed that the external torque $\tau_{e}$ and force $f_{e}$ directly serve as
control inputs. In this case, it is well known that the system (\ref{beam_PDE}) - (\ref{beam_BC2}) can be stabilized (even exponentially) by a simple (strictly) positive linear static feedback, see, e.g., \cite{lgm}, \cite{JakobZwart2012}. However, in real practical applications the external torque $\tau_{e}$ and force $f_{e}$ must be generated by some (electromagnetic, hydraulic or pneumatic) actuators whose dynamics cannot be neglected in general. In contrast to the usual approach in literature, it is therefore assumed in this work that these actuators are not ideally controlled, meaning that they are not serving as ideal torque and force sources, respectively, but that they are controlled in such a way that the subordinate closed-loop systems of the actuators comprising the actuator dynamics and a corresponding feedback controller constitute finite-dimensional passive dynamical systems according to (\ref{env_BC1}) and (\ref{env_BC2}). In summary, the system  (\ref{beam_PDE}) - (\ref{env_BC2}) may be interpreted as a feedback interconnected system with the lossless Euler Bernoulli beam (\ref{beam_PDE}) - (\ref{beam_BC2}) in the forward path and the passive spring-damper system (\ref{env_BC_SD}) as well as the strictly passive system (\ref{env_BC1}), (\ref{env_BC2}) in the feedback path, see Figure~\ref{fb_inter}. It is well known that the feedback interconnection of passive systems preserves the passivity, see, e.g., \cite{schaft2000}. This fact is often exploited in the controller design, see, e.g., \cite{ortega2002interconnection}, \cite{ott2008passivity}, for the finite-dimensional case.
However, in the infinite-dimensional case the analysis is typically confined to linear systems, see, e.g., \cite{lgm}, \cite{KT05}, \cite{Villegas2009}, or very recently \cite{Ramirez2014}. Thus, with this work we want to take a first step towards an extension of the state of the art to the nonlinear case by still considering a linear PDE but allowing for a nonlinear ODE at the boundary.
%
\begin{figure}[ht]
	\includegraphics[trim = 49mm 211mm 44mm 40mm, clip, scale=0.9]{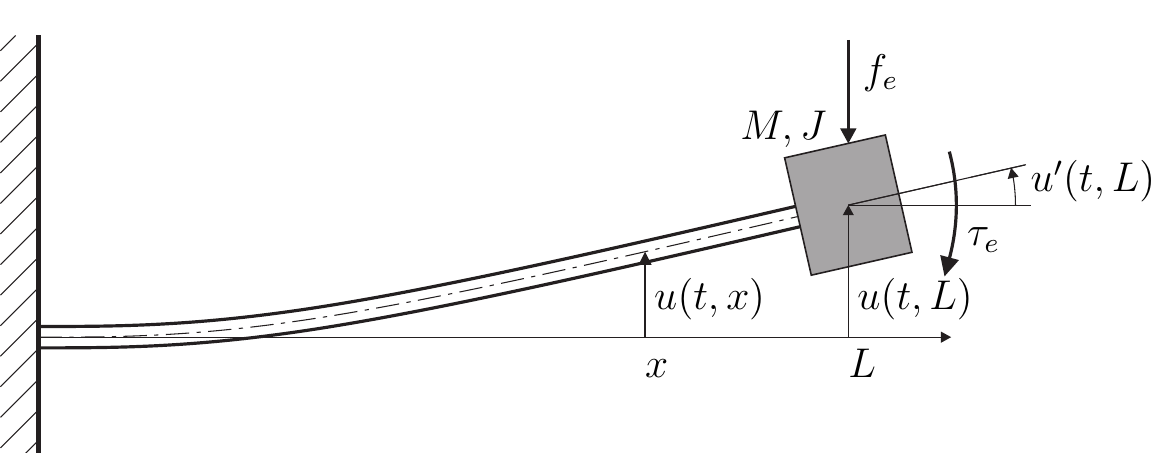}
		\caption{Euler-Bernoulli beam with tip mass.}
		\label{beam}
\end{figure}
%
%
%

\begin{figure}[ht]
	\includegraphics[trim = 20mm 130mm 30mm 78mm, clip, scale=0.9]{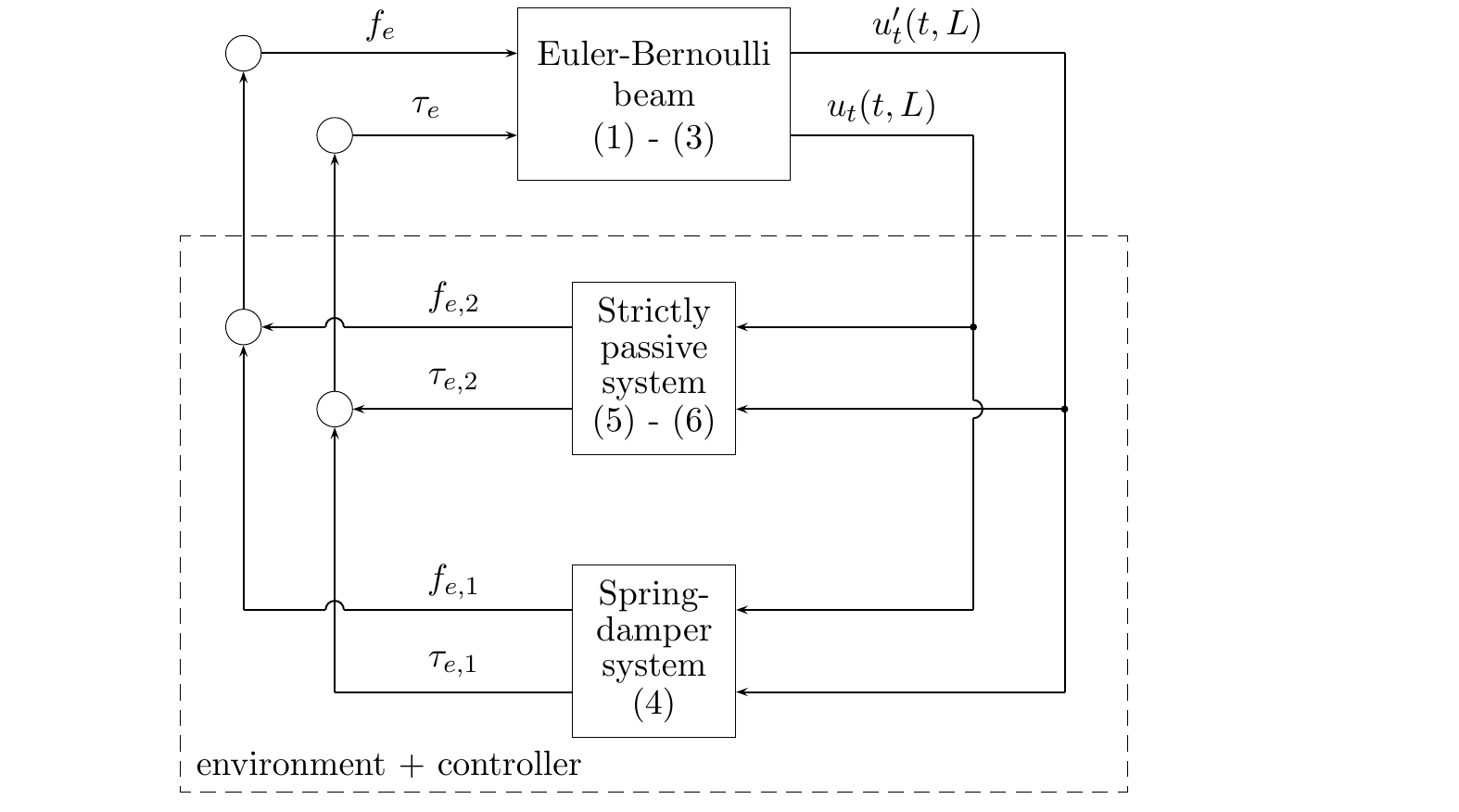}
		\caption{Interconnection of the Euler-Bernoulli beam system to a passive spring-damper system and a strictly passive system in the feedback path.}
		\label{fb_inter}
\end{figure}

The goal of this paper is to prove the global-in-time wellposedness and, most of all, the asymptotic stability of the feedback interconnected system (\ref{beam_PDE}) - (\ref{env_BC2}) according to Figure~\ref{fb_inter}. For both aspects, we have to deviate from the strategy employed in the analogous linear model (introduced and analyzed in \cite{KT05, MA}): In the linear case, the generator of the evolution semigroup is dissipative, which readily yields large-time solutions. The nonlinear semigroup for (\ref{beam_PDE}) - (\ref{env_BC2}) is \emph{not} dissipative (in the sense of \cite{cra_pa}). Hence, standard semigroup theory will first only yield local-in-time solutions, and the construction of an appropriate Lyapunov functional for (\ref{beam_PDE}) - (\ref{env_BC2}) then shows their global existence.

Asymptotic stability of the linear counterpart model is based on the precompactness of the trajectories, which can be obtained from the compactness of the resolvent for the generator. {For nonlinear evolution equations there exist different criteria for the precompactness of trajectories: They all split the generator of the nonlinear semigroup into the sum of a linear part $\mathcal L$ and a nonlinear part $\mathcal N$. In \cite{daf_sle}  $\mathcal A$ has to be maximal dissipative, and $\mathcal N$ has to be integrable along the solutions.  See \cite{cor_nov} for an application of this result, and also Section~\ref{S6} in this article below.  Another approach is due to \cite{Pazy3}, where only local integrability of $\mathcal N$ along trajectories is needed, however, the semigroup generated by $\mathcal L$ needs to be compact. Finally, in \cite{Webb} it is shown that the trajectories are precompact if the nonlinearity $\mathcal N$ is compact, and the semigroup generated by $\mathcal L$ is exponentially stable. Furthermore, we refer to \cite{zuyev} for further results regarding precompactness of trajectories. Unfortunately, none of the above results apply to the problem discussed in this paper, except for the special case $k_j=0$ discussed in Section~\ref{S6}. The reason for this is that the semigroup generated by the linear part is neither exponentially stable nor compact, and the nonlinearity $\mathcal N$ is generally not integrable along solutions. Hence,} for (\ref{beam_PDE}) - (\ref{env_BC2}), we shall follow a {different strategy, which was} devised for a simpler system in \cite{MSA} (it consists of a beam with a nonlinear spring and damper at the free end). For the precompactness of the trajectories of (\ref{beam_PDE}) - (\ref{env_BC2}), we shall here prove uniform $C^1$--bounds (w.r.t.\ time) on the solution, combined with compact Sobolev embeddings.

Note that the beam in (\ref{beam_PDE}) - (\ref{env_BC2}) (and in its linear counterpart) is undamped. Damping of the complete feedback system is only introduced via the damper of (\ref{env_BC_SD}) and the strictly passive systems (\ref{env_BC1}), (\ref{env_BC2}). This motivates that the linear model from \cite{MA} is asymptotically stable, but \emph{not} exponentially stable. Hence, exponential stability also cannot be expected for our nonlinear system (\ref{beam_PDE}) - (\ref{env_BC2}). Of course, exponential stability could be enforced by including damping terms into (\ref{beam_PDE}) (either a viscous damping of the form $+\alpha u_t$ or a Kelvin-Voigt damping of the form $+\alpha u_t^{\rom{4}}$). While viscous damping would lead to a simple extension of the subsequent analysis, the higher order derivatives in the Kelvin-Voigt damping would require a rather different mathematical setup. Hence, we shall not elaborate on such dampings here.

This paper is organized as follows: In Section \ref{secc5}, the technical assumptions made for the coefficients and functions of the system (\ref{beam_PDE}) - (\ref{env_BC2}) are specified, and in Section \ref{S3} the problem is formulated as a first order evolution equation. Using semigroup theory we prove in Section \ref{secc6} that it has a unique global-in-time solution. In Section \ref{S5}, the possible $\omega$--limit set of this evolution is derived and analyzed. For proving the asymptotic stability of (\ref{beam_PDE}) - (\ref{env_BC2}), we have to distinguish between two cases. For linear functions $k_j$, it is shown in Section \ref{S6} that asymptotic stability can be achieved for all mild solutions. For nonlinear $k_j$, it is much more involved to prove precompactness of the trajectories. In this case, asymptotic stability of classical solutions is shown in Section \ref{S7}.

\section{Preliminaries}\label{secc5}

{In the following sections, we will give a rigorous mathematical analysis of the feedback interconnected system (\ref{beam_PDE}) - (\ref{env_BC2}) according to Figure~\ref{fb_inter}. For this, the assumptions on the parameters and functions appearing in (\ref{beam_PDE}) - (\ref{env_BC2}) have to be specified. First of all, let us assume that the mass per unit length $\rho$, the flexural rigidity $\Lambda$, the mass moment of inertia $J$, and the mass $M$ of the tip mass are constant and positive.
For the spring-damper system (\ref{env_BC_SD}) we make the following assumptions for $j=1,2$:
\begin{description}
\item[\textbf{(A1)}] There holds $d_j\in C^2(\R;\R)$ (i.e. the space of two times continuously differentiable functions, see \cite{adams}), and\footnote{Here, $d_j'$ and $k_j'$ denote the derivative with respect to the variable $s$.}
\begin{subequations}\label{d:c}
\begin{align}
d_j(0)&=0,\\
d_j'(s)s &\ge 0,\quad\forall s\in\R,\label{d:c_1}\\
d_j'(0) &>0.
\end{align}
\end{subequations}
Note that this implies $d_j(s)s>0$ for all $s\neq 0$.
\item[\textbf{(A2)}] We require $k_j\in C^2(\R;\R)$, with $k_j'(0)>0$ and 
\begin{align}
V_{k_j}(s):=\int_0^s k_j(\sigma)\d \sigma& > 0,\quad\forall s\in\R\setminus\{0\}.\label{k:c_2}
\end{align}
\end{description}
Based on the assumptions (A1) - (A2), it can be easily shown that the spring-damper system (\ref{env_BC_SD}) is strictly passive from the inputs $u_{t}'(t,L)$ and $u_{t}(t,L)$ to the outputs $\tau_{e,1}$ and $f_{e,1}$, respectively, with the positive definite storage functions $V_{k_j},\ j=1,2$, according to (\ref{k:c_2}).}

{As a further consequence, we find uniquely determined constants $D_j,K_j>0$ and functions $\delta_j,\kappa_j\in  C^2(\R;\R)$ with $\delta_j(s)=\mathcal O (s^2)$ and $\kappa_j(s)=\mathcal O (s^2)$ for $s\to 0$ such that
\begin{align}
d_j(s)&=D_js+\delta_j(s),\quad\forall s\in\R,\label{d:c_2}\\
      k_j(s)&=K_js+\kappa_j(s),\quad\forall s\in\R.\label{k:c_1}
\end{align}
Hence, $D_js$ is the linearization of $d_j(s)$, and $K_js$ is the linearization of $k_j(s)$ around $s=0$.}

{
\begin{description}
\item[\textbf{(A3)}] Furthermore, we assume that there there exist (storage) functions $V_j\in C^2(\R^{n_j};\R)$, for $j=1,2$, such that for all $z_j\in\R^{n_j}$:
\begin{align}
V_j(0)=0,\quad V_j(z_j)&> 0,\quad (z_j\neq 0),\label{v:c_1}\\
\nabla V_j(z_j)\cdot a_j(z_j)&<0,\quad (z_j\neq 0),\label{a:c_3}\\
\nabla V_j(z_j)\cdot b_j(z_j) &= c_j(z_j)\label{c:c_3}.
\end{align}
\end{description}
According to the Kalman-Yakubovich-Popov (KYP) lemma for nonlinear systems with affine input, see {Lemma~4.4} in \cite{lozano2000}, this implies the strict passivity of the systems (\ref{env_BC1}) and (\ref{env_BC2}).}

{
For the mathematical analysis we furthermore require for ${j=1,2}$:
\begin{description}
\item[\textbf{(A4)}] Assume there holds \footnote{Note that condition \eqref{v:c_2} can be relaxed, see Remark~\ref{mark}.}
\begin{align}
\lim_{|z_j|\to\infty}V_j(z_j)&=\infty,\label{v:c_2}\\
P_j:=\hess(V_j)(0)&> 0.\label{v:c_3}
\end{align}
\item[\textbf{(A5)}]${a_j, \,b_j\in C^2(\R^{n_j}; \R^{n_j})}$ and
\begin{subequations}\label{14_10_1}
\begin{align}
a_j(0) &= 0,\label{14_10_1a}\\
\det A_j &\neq 0,\label{14_10_1b}
\end{align}
\end{subequations}
where $A_j= J_{a_j}(0)$ is the Jacobian of $a_j$ at $z_j=0$.
\item[\textbf{(A6)}] ${c_j\in C^2(\R^{n_j}; \R)}$, and
\begin{align*}
c_j(0) &=0.
\end{align*}
\end{description}
The assumptions (A3) - (A6) have the following implications, for $j=1,2$:
\begin{itemize}
 \item There exists a unique {\em regular} matrix $A_j\in\R^{n_j\times n_j}$ and a function $\alpha_j\in C^2(\R^{n_j}; \R^{n_j})$ such that for all $z_j\in\R^{n_j}$
\begin{subequations}\label{a:c}
 \begin{align}
      a_j(z_j)&=A_jz_j+\alpha_j(z_j),\label{a:c_1}\\
      |\alpha_j(z_j)| &= \mathcal O(|z_j|^2)\quad\text{ as }z_j\to 0,\label{a:c_2}
\end{align}
\end{subequations}
hence $A_j z_j$ is the linearization of $a_j(z_j)$ around the origin. By using the first order Taylor expansion of $\nabla V_j$ around the origin we conclude from \eqref{a:c_3} and \eqref{v:c_1} that
\begin{align}
z_j^\top(P_jA_j)z_j&\le 0,\quad \forall z_j\in\R^{n_j},\label{a:c_5}\\
\intertext{and from \eqref{v:c_3} and \eqref{14_10_1} we find}
|\nabla V_j(z_j)\cdot a_j(z_j)|&\ge C |z_j|^2\quad\text{ as }z_j\to 0,\label{a:c_4}
\end{align}
for some positive constant $C$.
\item There exists a unique vector $B_j\in\R^{n_j}$ and a function $\beta_j\in C^2(\R^{n_j}; \R^{n_j})$ such that for all $z_j\in\R^{n_j}$
\begin{subequations}\label{b:c}
 \begin{align}
      b_j(z_j)&=B_j+\beta_j(z_j),\label{b:c_1}\\
       |\beta_j(z_j) |&= \mathcal O(|z_j|)\quad\text{as }z_j\to 0.\label{b:c_2}
\end{align}
\end{subequations}
\item There exists a unique vector $C_j\in\R^{n_j}$ and a function $\gamma_j\in C^2(\R^{n_j}; \R)$ such that for all $z_j\in\R^{n_j}$
\begin{subequations}\label{c:c}
 \begin{align}
      c_j(z_j)&=C_j \cdot z_j+\gamma_j(z_j),\label{c:c_1}\\
      |\gamma_j(z_j)| &= \mathcal O(|z_j|^2)\quad\text{ as }z_j\to 0.\label{c:c_2}
      \intertext{Note that \eqref{c:c_3} implies}
       P_j B_j&=C_j.\label{c:c_4}
\end{align}
\end{subequations}
\end{itemize}
To illustrate that the above assumptions may be satisfied we just mention the linear model from \cite{KT05,MA} with $\alpha_j=\beta_j=\gamma_j=\delta_j=\kappa_j=0$. There (and in many nonlinear perturbations of it) assumptions (A1) - (A6) hold. }

{
\begin{rem}
For this paper it would even be possible to only make the weaker assumptions $a_j, b_j\in W_{\mathrm{loc}}^{2,\infty}(\R^{n_j};\R^{n_j})$, $c_j\in W_{\mathrm{loc}}^{2,\infty}(\R^{n_j};\R)$ and $d_j,k_j\in W_{\mathrm{loc}}^{2,\infty}(\R;\R)$, for $j=1,2$. Here, $W_{\mathrm{loc}}^{2,\infty}$ is the space of all $C^1$-functions whose first order derivatives are locally Lipschitz continuous functions (cf.~\cite{adams}). In particular, the local behavior of the functions $\alpha_j$, $\beta_j$, $\gamma_j$, $\delta_j$ and $\kappa_j$ around the origin also stays the same, which can be seen by using the integral form of the remainder in the respective Taylor expansions.
\end{rem}
For the rest of this paper the conditions (A1) - (A6) on the system \eqref{beam_PDE} - \eqref{env_BC2} are tacitly always assumed to hold.}

\section{Formulation as an Evolution Equation}\label{S3}

System (\ref{beam_PDE}) - (\ref{env_BC2}) is reformulated as an evolution equation in the Hilbert space
\begin{align*}
\HH=\{y=[u,v,z_1,z_2,\xi,\psi]^\top&: u\in\tilde H_{0}^2(0,L), v\in L^2(0,L),z_j\in\R^{n_j}, \xi,\psi\in\R\},
\end{align*}
where for $n\ge 2$ \[\tilde H_{0}^n(0,L):=\{f \in H^n(0,L) : f(0)=f'(0)=0\}.\]
{Note that we impose the function value and its first derivative only at the left boundary, i.e.~$x=0$. Hence, $\tilde H_0^n$ differs from the standard Sobolev spaces $H^n$ and $H_0^n$. We refer to \cite{adams} for the Lebesgue space $L^2(I)$ and the Sobolev spaces $H^n(I)$ on some interval $I$.} The inner product is defined by
\begin{align}
\begin{split}\label{ip_II}
 \la y,\tilde y\ra_\HH =\,& \Lambda \int_0^L u^{\prime\prime}\tilde u^{\prime\prime} \d x +\rho\int_0^L v\tilde v \d x+ \frac 1J \xi\tilde\xi+\frac 1M\psi\tilde\psi \\
      &+ K_1 u'(L)\tilde u'(L)+K_2 u(L)\tilde u(L)
      +  z_1^\top P_1\tilde z_1+z_2^\top P_2\tilde z_2 ,
\end{split}\end{align}
where the positive definite matrices $P_j$, for $j=1,2$, are due to \eqref{v:c_3}. For the following, the operator
\[\LL:\begin{bmatrix}
              u \\ v\\ z_1 \\ z_2 \\ \xi \\ \psi
             \end{bmatrix}\mapsto
\begin{bmatrix}
 v \\
-\frac{\Lambda}\rho u^\rom{4} \\
a_1(z_1)+\frac 1J b_1(z_1)\xi\\
a_2(z_2)+\frac 1M b_2(z_2)\psi\\
-\Lambda u^{\prime\prime}(L)-[c_1(z_1)+ d_1(\frac{\xi}{J}) +k_1 (u'(L))]\\
\Lambda u^{\prime\prime\prime}(L)-[c_2(z_2)+ d_2(\frac{\psi}{M}) +k_2 (u(L))]
\end{bmatrix}
\]
is introduced on the domain
\begin{align}
 D(\LL)=\{y\in\HH&:u\in \tilde H_{0}^4(0,L), v\in\tilde H_{0}^2(0,L),\label{da3}\\
 &\quad \xi=Jv'(L),\psi=Mv(L)\}.\nonumber
\end{align}
Based on the formulation of the coefficient functions, the operator $\LL$ is decomposed into a linear and a nonlinear part:

{\bf Linear part:} The linear part is denoted by ${\A}$, which is the linearization of $\LL$ around the origin:
      \[{\A}:\begin{bmatrix}
              u \\ v\\ z_1 \\ z_2 \\ \xi \\ \psi
             \end{bmatrix}\mapsto
\begin{bmatrix}
 v \\
-\frac{\Lambda}\rho u^\rom{4} \\
A_1z_1+\frac 1J B_1\xi\\
A_2z_2+\frac 1M B_2\psi\\
-\Lambda u^{\prime\prime}(L)-[C_1z_1+\frac 1J D_1\xi +K_1 u'(L)]\\
\Lambda u^{\prime\prime\prime}(L)-[C_2z_2+\frac 1M D_2\psi +K_2 u(L)]
\end{bmatrix},\]
and the domain is $D({\A})=D(\LL)$.

{\bf Nonlinear part:} {The nonlinear part $\mathcal N$ is defined as the following continuous operator on all of $\HH$:}
      \[\mathcal N:\begin{bmatrix}
              u \\ v\\ z_1 \\ z_2 \\ \xi \\ \psi
             \end{bmatrix}\mapsto
\begin{bmatrix}
 0 \\
0 \\
\alpha_1(z_1)+\frac 1J \beta_1(z_1)\xi\\
\alpha_2(z_2)+\frac 1M \beta_2(z_2)\psi\\
-\gamma_1(z_1)- \delta_1(\frac{\xi}{J})-\kappa_1(u'(L))\\
-\gamma_2(z_2)- \delta_2(\frac{\psi}{M})-\kappa_2(u(L))
\end{bmatrix}.\]
{On $D(\LL)$ there holds $\LL = {\A}+\mathcal N$.}
\bigskip

\begin{trm}\label{teo73}
The linear operator ${\A}$ with domain $D({\A})$ generates a $C_0$-semigroup of contractions in $\HH$, denoted by $(\e^{t{\A}})_{t\ge 0}$.
\end{trm}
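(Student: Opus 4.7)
The plan is to invoke the Lumer--Phillips theorem: I verify that $\A$ is densely defined in $\HH$, dissipative, and satisfies the range condition $\ran(\lambda\id - \A) = \HH$ for some $\lambda > 0$. Density of $D(\A)$ is straightforward: given $y = (u, v, z_1, z_2, \xi, \psi)^\top \in \HH$, one approximates $u$ by a sequence $u_n \in \tilde H_0^4(0,L)$ in the $H^2$-norm, and then picks $v_n \in \tilde H_0^2(0,L)$ with prescribed boundary values $v_n(L) = \psi/M$, $v_n'(L) = \xi/J$ converging to $v$ in $L^2$; this is possible since the trace map $\tilde H_0^2(0,L) \to \R^2$ is surjective and its kernel contains $C_c^\infty(0,L)$, which is dense in $L^2(0,L)$. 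Keeping the finite-dimensional components fixed then yields a sequence in $D(\A)$ converging to $y$ in $\HH$.

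Dissipativity follows from a direct computation of $\la \A y, y\ra_\HH$ for $y \in D(\A)$. Integrating $\int_0^L u^{\rom{4}} v\,\d x$ by parts twice and using $u(0) = u'(0) = v(0) = v'(0) = 0$ yields boundary terms $-\Lambda[u'''(L)v(L) - u''(L)v'(L)]$, which via $\xi = Jv'(L)$ and $\psi = Mv(L)$ exactly cancel the $\Lambda u''(L)$ and $\Lambda u'''(L)$ contributions in the last two components of $\A y$. Similarly, the spring entries $K_1 v'(L)u'(L) + K_2 v(L)u(L)$ coming from the inner product cancel the $K_j$ contributions in those boundary components. Finally, the cross terms $\tfrac{1}{J}B_1\xi$ and $\tfrac{1}{M}B_2\psi$ inside the $z_j$-components, after pairing with $P_j z_j$, produce $\tfrac{1}{J}\xi\, C_1^\top z_1$ and $\tfrac{1}{M}\psi\, C_2^\top z_2$ via the KYP identity $P_j B_j = C_j$ from \eqref{c:c_4}, which precisely cancel the $-C_j \cdot z_j$ contributions coming from the boundary components. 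Using the symmetry $z_j^\top A_j^\top P_j z_j = z_j^\top P_j A_j z_j$, only
\[
\la \A y, y\ra_\HH \;=\; z_1^\top P_1 A_1 z_1 + z_2^\top P_2 A_2 z_2 - \frac{D_1}{J^2}\xi^2 - \frac{D_2}{M^2}\psi^2
\]
survives, which is $\le 0$ by \eqref{a:c_5} and by $D_j > 0$ (from (A1)).

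For the range condition I fix $\lambda > 0$ (to be taken large if necessary) and solve $(\lambda\id - \A) y = f$ for arbitrary $f = (f_1, \ldots, f_6)^\top \in \HH$. The first component gives $v = \lambda u - f_1$, which automatically lies in $\tilde H_0^2(0,L)$ once $u \in \tilde H_0^4(0,L)$. By \eqref{a:c_5} and \eqref{14_10_1b}, every eigenvalue of $A_j$ has non-positive real part and $A_j$ is invertible, so $\lambda\id - A_j$ is invertible for every $\lambda > 0$, and the third and fourth components let me express $z_j$ as an explicit affine function of $u'(L)$ (respectively $u(L)$) and the data. Substituting into the remaining equations reduces the problem to the fourth-order elliptic BVP
\[
\tfrac{\Lambda}{\rho}\, u^{\rom{4}} + \lambda^2 u = f_2 + \lambda f_1 \quad\text{on } (0,L),
\]
with $u(0) = u'(0) = 0$ and two linear, coupled conditions at $x = L$ that incorporate the eliminated $\xi, \psi, z_1, z_2$. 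Testing against $\phi \in \tilde H_0^2(0,L)$ and integrating by parts produces a symmetric bilinear form
\[
a_\lambda(u,\phi) = \tfrac{\Lambda}{\rho}\!\int_0^L u''\phi''\,\d x + \lambda^2\!\int_0^L u\phi\,\d x + c_1(\lambda) u'(L)\phi'(L) + c_2(\lambda) u(L)\phi(L),
\]
to which I apply Lax--Milgram. The main obstacle is the coercivity check: the coefficients $c_j(\lambda)$ contain indefinite contributions of the form $C_j^\top (\lambda\id - A_j)^{-1} B_j$, but they grow like $\lambda^2$ times the positive constants $J,M$, while the $\lambda^2\|u\|_{L^2}^2$ and $\|u''\|_{L^2}^2$ terms dominate the $H^2$-norm via a Poincar\'e inequality on $\tilde H_0^2(0,L)$. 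For $\lambda$ sufficiently large $a_\lambda$ is therefore coercive; Lax--Milgram yields a unique $u \in \tilde H_0^2(0,L)$, and elliptic regularity upgrades it to $u \in \tilde H_0^4(0,L)$. The remaining components of $y$ are then reconstructed, and Lumer--Phillips concludes.
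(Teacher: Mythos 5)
Your proof is correct, and its skeleton coincides with the paper's: both rest on the Lumer--Phillips theorem, and your dissipativity computation (integration by parts, cancellation of the boundary terms via $\xi=Jv'(L)$, $\psi=Mv(L)$, and the KYP identity $P_jB_j=C_j$ from \eqref{c:c_4}) is exactly the computation in the paper, yielding $\la\A y,y\ra_\HH=z_1^\top P_1A_1z_1+z_2^\top P_2A_2z_2-D_1|v'(L)|^2-D_2|v(L)|^2\le0$. The genuine difference lies in the range condition: the paper (following Section~4.2 of \cite{KT05}) exhibits a bounded --- even compact --- inverse $\A^{-1}$, i.e.\ it solves the stationary problem at $\lambda=0$ and then applies Lumer--Phillips, and this compactness of the resolvent is reused later in the proof of Theorem~\ref{fin_trm}; you instead solve $(\lambda-\A)y=f$ for large $\lambda$ by eliminating $z_j,\xi,\psi$ and applying Lax--Milgram to the reduced fourth-order form. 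Your route is sound; note only that the indefinite boundary contribution $\lambda C_j^\top(\lambda-A_j)^{-1}B_j$ does not ``grow like $\lambda^2$'' --- it stays bounded as $\lambda\to\infty$ (indeed it tends to $C_j^\top B_j=B_j^\top P_jB_j\ge0$), which is precisely why the positive parts $J\lambda^2$, $M\lambda^2$ dominate and $c_j(\lambda)>0$ for $\lambda$ large, so coercivity holds as you claim. Comparing what each buys: your argument additionally makes the density of $D(\A)$ explicit (left implicit in the paper), but it avoids $\lambda=0$ and therefore does not deliver the compactness of $(\lambda-\A)^{-1}$ that the paper's construction of $\A^{-1}$ provides and later exploits in Section~\ref{S6}.
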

\begin{proof}
This result has been shown for {the same operator} in Section 4.2 of \cite{KT05}. For convenience of the reader we briefly sketch the main steps of the proof. A brief calculation yields for $y\in D({\A})$, using  \eqref{a:c_5}:
\begin{align*}
\la {\A}y,y\ra_\HH = &z_1^\top(P_1A_1)z_1+z_2^\top(P_2A_2)z_2\\
&\quad -D_1|v'(L)|^2-D_2|v(L)|^2\le 0.
\end{align*}
Hence the operator ${\A}$ is dissipative in $\HH$ with respect to the inner product \eqref{ip_II}. Furthermore, the inverse ${\A}^{-1}$ exists and is bounded (even compact). Now the statement immediately follows from the Lumer-Phillips theorem.
\end{proof}

\begin{rem}\label{hypoer}
Since ${\A}$ is the infinitesimal generator of a $C_0$-semigroup of contractions, ${\A}$ is dissipative and {$\ran (\lambda-{\A})=\HH$ for all $\lambda>0$. In particular $\ran (I-{\A})=\HH$. So ${\A}$ is {\em hyper-dissipative} according to Definition~2.1 in \cite{cra_pa}. And Theorem~2.2 in \cite{cra_pa} shows that ${\A}$ is maximal dissipative, i.e.~${\A}$ is not contained in a strictly larger dissipative operator (in the sense of graphs). This property is needed for the proof of Theorem~\ref{fin_trm}.}
\end{rem}


\section{Existence of Solutions}\label{secc6}

We are interested in solutions of the following initial value problem in $\HH$:
\begin{subequations}\label{ivp}
 \begin{align}
   y_t(t)&=\LL y(t)={\A}y(t)+\mathcal Ny(t),\label{iv_1}\\
  y(0)&=y_0\in \HH\label{ic_1}.
 \end{align}
\end{subequations}
Any (mild) solution $y(t)\in C([0,T];\HH)$, for $T>0$, is known to satisfy Duhamel's formula:
      \begin{equation}\label{milde}
       y(t)=\e^{t {\A}}y_0+\int_0^t \e^{(t-s){\A}}\mathcal N y(s)\d s,\quad 0\le t< T.
      \end{equation}
%

\begin{prop}\label{pro_61}
For every $y_0\in \HH$, there exists some maximal $0<T_{\mathrm{max}}(y_0)\le\infty$ such that \eqref{ivp} has a unique mild solution $y(t)$ on $[0,T_{\mathrm{max}}(y_0))$. If $y_0\in D(\LL)$, the corresponding mild solution $y(t)$ is a classical solution. If $T_{\mathrm{max}}(y_0)<\infty$, then $\lim_{t\nearrow T_{\mathrm{max}}(y_0)} \|y(t)\|_\HH=\infty$.
\end{prop}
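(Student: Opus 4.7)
The plan is to invoke the standard semilinear existence theory of Pazy (Chapter~6, \cite{Pazy3} or a similar text), since the evolution equation \eqref{iv_1} is of the form $y_t = \A y + \mathcal N y$ with $\A$ generating a $C_0$-semigroup of contractions (Theorem~\ref{teo73}) and $\mathcal N:\HH\to\HH$ a continuous nonlinear perturbation. The workhorse is a Banach fixed point argument applied to the right-hand side of Duhamel's formula \eqref{milde} on a small time interval $[0,T]$ inside a closed ball of $C([0,T];\HH)$.

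The central technical point to verify is that $\mathcal N$ is \emph{locally Lipschitz continuous} as a map $\HH\to\HH$. The components involving only the finite-dimensional variables $z_1,z_2,\xi,\psi$ are locally Lipschitz because $\alpha_j,\beta_j,\gamma_j,\delta_j\in C^2$ (by assumptions (A1), (A3), (A5), (A6)), and products of bounded $C^1$ functions with linear factors are locally Lipschitz on bounded sets of finite-dimensional spaces. The only genuinely infinite-dimensional contributions are the boundary terms $\kappa_1(u'(L))$ and $\kappa_2(u(L))$ appearing in the last two components. Here I would exploit the continuous Sobolev embedding $\tilde H_0^2(0,L)\hookrightarrow C^1([0,L])$, which yields $|u'(L)|+|u(L)|\le C\|u\|_{\tilde H_0^2}\le C'\|y\|_\HH$; combined with $\kappa_j\in C^2$ and the mean value theorem, one obtains local Lipschitz bounds on bounded sets of $\HH$. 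Once this is established, Picard iteration on
\[
(\Phi y)(t)=\e^{t\A}y_0+\int_0^t\e^{(t-s)\A}\mathcal N y(s)\d s
\]
converges in $C([0,T];\HH)$ for $T$ sufficiently small (using $\|\e^{t\A}\|\le 1$), producing a unique local mild solution.

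The maximal existence time $T_{\mathrm{max}}(y_0)\in(0,\infty]$ is obtained by the standard extension procedure, and the blow-up alternative follows by contradiction: if $T_{\mathrm{max}}<\infty$ and $\|y(t)\|_\HH$ were to remain bounded up to $T_{\mathrm{max}}$, then the local Lipschitz constant of $\mathcal N$ on the corresponding ball would be uniform, so the local existence time from any starting point on the trajectory would be bounded below by a fixed positive number, allowing an extension past $T_{\mathrm{max}}$, a contradiction.

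Finally, to upgrade a mild solution with $y_0\in D(\LL)=D(\A)$ to a classical solution, I would invoke Theorem~6.1.5 of \cite{Pazy3}: it suffices that $\mathcal N$ is continuously (Fr\'echet) differentiable from $\HH$ into $\HH$, which holds here since each component of $\mathcal N$ is a composition of $C^2$ functions with continuous linear functionals on $\HH$ (again using the Sobolev trace/embedding for the terms $\kappa_j$). Under these hypotheses, the mild solution satisfies $y\in C([0,T_{\mathrm{max}});D(\LL))\cap C^1([0,T_{\mathrm{max}});\HH)$ and solves \eqref{iv_1} pointwise in $\HH$. The only delicate point I anticipate is bookkeeping the Lipschitz (resp.\ $C^1$) dependence of the boundary-trace terms; this is handled uniformly by the Sobolev embedding $\tilde H_0^2(0,L)\hookrightarrow C^1([0,L])$, which also shows that the graph norm of $\LL$ controls the higher traces needed to check that $y(t)$ remains in $D(\LL)$.
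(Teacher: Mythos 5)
Your proposal follows essentially the same route as the paper: verify that $\mathcal N:\HH\to\HH$ is locally Lipschitz continuous and continuously differentiable (using the embedding $\tilde H_0^2(0,L)\hookrightarrow C^1([0,L])$ for the boundary-trace terms $\kappa_j$), and then invoke Pazy's semilinear theory (Theorems~6.1.4 and 6.1.5 of the semigroup book) for local existence, the blow-up alternative, and the upgrade to classical solutions when $y_0\in D(\LL)$; your explicit Picard iteration and extension argument merely unpack what those cited theorems already contain. The argument is correct, modulo the harmless slip of citing Pazy's 1975 paper where the book is meant.
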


\begin{proof}
By assumption, the functions $\alpha_j,\beta_j,\gamma_j,\delta_j$ and $\kappa_j$ are continuously differentiable and {locally lipschitz continuous, so the nonlinear map $\mathcal N:\HH\to\HH$ has the same properties}. Furthermore, ${\A}$ is the generator of a $C_0$-semigroup, see Theorem~\ref{teo73}. Now we may apply Theorem~6.1.4 in \cite{pazy} to the autonomous problem \eqref{ivp}, which yields the existence of a unique mild solution on the {maximal} time interval $[0,T_{\mathrm{max}}(y_0))$. {If  $T_{\mathrm{max}}(y_0)<\infty $, then a blowup of $y(t)$ occurs.} Moreover, Theorem~6.1.5 in \cite{pazy} implies that for $y_0\in D(\LL)$, any mild solution is a classical solution.
\end{proof}

Next we introduce the functional $H:\HH\to\R$, given by
\begin{align*}
 H(y)&:=\frac 12\int_0^L \big(\Lambda |u^{\prime\prime}|^2+\rho |v|^2\big)\d x+\frac {|\xi|^2}{2J}+\frac{|\psi|^2} {2M}\\
 &\phantom{:=}+\underset{V_{k_{1}}\left(u^{\prime}(L)\right)}{\underbrace{\int_0^{u^{\prime}(L)}\!\!\!\!\!\!\!\!\!k_1(s)\d s}}+
\underset{V_{k_{2}}\left(u(L)\right)}{\underbrace{\int_0^{u(L)}\!\!\!\!\!\!\!\!\!k_2(s)\d s}}+V_1(z_1)+V_2(z_2).
\end{align*}
Note that the first integral term in $H(y)$ corresponds to the strain energy and kinetic energy of the Euler-Bernoulli beam, the next two summands are the translational and rotational part of the kinetic energy of the tip mass, $V_{k_{j}}$, for $j=1,2$, is the potential energy stored in the nonlinear spring elements, see (\ref{env_BC_SD}) and (\ref{k:c_2}), and $V_{j}$, for $j=1,2$, are the non-negative storage functions of the strictly passive systems (\ref{env_BC1}) and (\ref{env_BC2}), respectively. Obviously $H(y)\ge 0$ for all $y\in\HH$. Note that $H(y)$ is exactly the sum of the storage functions of the lossless Euler-Bernoulli beam \eqref{beam_PDE} - \eqref{beam_BC2}, the nonlinear spring-damper systems \eqref{env_BC_SD} and the strictly passive nonlinear dynamic feedback systems \eqref{env_BC1} and \eqref{env_BC2}, cf.~Figure~\ref{fb_inter}. In the following, it will be shown that $H$ qualifies as a Lyapunov function for the system \eqref{ivp}.

\begin{lem}\label{le_81}
 The function $H$ is continuous in $\HH$.
\end{lem}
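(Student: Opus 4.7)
The idea is to decompose $H$ into its summands and verify continuity of each on $\HH$, then conclude by the continuity of finite sums. The plan has three essentially independent ingredients: the quadratic terms coming from the beam/tip-mass energies, the potential terms $V_{k_j}$ at the boundary $x=L$, and the storage terms $V_j(z_j)$.

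First, I would handle the quadratic part. The terms $\tfrac{\Lambda}{2}\int_0^L|u''|^2\d x$, $\tfrac{\rho}{2}\int_0^L|v|^2\d x$, $\tfrac{|\xi|^2}{2J}$, $\tfrac{|\psi|^2}{2M}$ are (up to constants) pieces of the squared norm \eqref{ip_II} on $\HH$. Since $y\mapsto\|y\|_\HH^2$ is continuous and each of these is a nonnegative component bounded by $\|y\|_\HH^2$, continuity follows directly from the fact that the relevant coordinate projections are bounded linear maps from $\HH$ into $L^2(0,L)$ or $\R$.

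Second, for the boundary potential terms $V_{k_j}(u^{(j-1)}(L))$, $j=1,2$, the key observation is the Sobolev embedding $\tilde H_0^2(0,L)\hookrightarrow C^1([0,L])$ in one dimension, which makes the trace maps $u\mapsto u(L)$ and $u\mapsto u'(L)$ continuous from $\tilde H_0^2(0,L)$ to $\R$. Since convergence in $\HH$ implies convergence of the first component in $\tilde H_0^2(0,L)$, the maps $y\mapsto u(L)$ and $y\mapsto u'(L)$ are continuous from $\HH$ to $\R$. Now $V_{k_j}(s)=\int_0^s k_j(\sigma)\d\sigma$ with $k_j\in C^2(\R;\R)$ by assumption (A2), so $V_{k_j}\in C^3(\R;\R)$ and in particular continuous. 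Composing, $y\mapsto V_{k_j}(u^{(j-1)}(L))$ is continuous on $\HH$.

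Third, for the storage functions $V_j(z_j)$, assumption (A3) gives $V_j\in C^2(\R^{n_j};\R)$, hence continuous, and the projection $y\mapsto z_j$ is a bounded linear map $\HH\to\R^{n_j}$ (bounded by $\|y\|_\HH$ using that $P_j$ is positive definite). Composition yields continuity of $y\mapsto V_j(z_j)$. Summing the finitely many continuous contributions, $H$ is continuous on $\HH$. The only nontrivial point worth emphasizing is the one-dimensional Sobolev embedding used for the boundary evaluations; everything else reduces to continuity of continuous functions composed with bounded linear projections.
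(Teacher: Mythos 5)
Your argument is correct and follows the same route as the paper: continuity of the quadratic and $V_j$-terms is immediate via the (bounded) coordinate projections, and the $V_{k_j}$-terms are handled exactly as in the paper through the embedding $H^2(0,L)\hookrightarrow C^1([0,L])$, which makes the traces $u(L)$, $u'(L)$ continuous. Your write-up merely spells out in more detail what the paper's two-line proof leaves implicit.
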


\begin{proof}
The continuity of the terms in $H$ except for the $k_j$-terms is immediate. Due to the continuous embedding $H^2(0,L)\inj C^1([0,L])$ the continuity of the remaining $k_j$-terms follows as well.
\end{proof}

\begin{lem}\label{lem83}
Due to assumption \eqref{v:c_2} we have for any sequence $\{y_n\}_{n\in\N}\subset\HH$:
\[\sup_{n\in\N}H(y_n)<\infty\quad\aq\quad\sup_{n\in\N}\|y_n\|_{\HH}<\infty.\]
\end{lem}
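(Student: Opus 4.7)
The plan is to prove the two implications separately and exploit the fact that every term in both $H(y)$ and $\|y\|_{\HH}^2$ is non-negative, so that a bound on the total gives a bound on each summand.

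For the direction $\sup_n \|y_n\|_\HH < \infty \Rightarrow \sup_n H(y_n)<\infty$, I would note that the inner product \eqref{ip_II} directly contains $\Lambda\|u_n''\|_{L^2}^2$, $\rho\|v_n\|_{L^2}^2$, $|\xi_n|^2/J$, $|\psi_n|^2/M$, $K_j|u_n'(L)|^2$, $K_j|u_n(L)|^2$ and $z_{j,n}^\top P_j z_{j,n}$ (with $K_j>0$ and $P_j>0$), so a uniform norm bound gives uniform bounds on all of $\|u_n''\|_{L^2}$, $\|v_n\|_{L^2}$, $|\xi_n|$, $|\psi_n|$, $|u_n'(L)|$, $|u_n(L)|$ and $|z_{j,n}|$. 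Each of the remaining terms in $H$ is a composition of a $C^2$ function ($V_{k_j}$ or $V_j$) with one of these bounded quantities, hence bounded by continuity on bounded sets.

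For the reverse direction I would use the positivity of each summand in $H$ (by (A2) and (A3)) to immediately bound $\|u_n''\|_{L^2}^2$, $\|v_n\|_{L^2}^2$, $|\xi_n|^2$, $|\psi_n|^2$, $V_{k_j}(u_n'(L))$, $V_{k_j}(u_n(L))$, and $V_j(z_{j,n})$ individually. The coercivity hypothesis \eqref{v:c_2} is precisely what is needed to convert the bound on $V_j(z_{j,n})$ into a bound on $|z_{j,n}|$; indeed, if $|z_{j,n}|\to\infty$ along a subsequence, then $V_j(z_{j,n})\to\infty$, contradicting $\sup_n H(y_n)<\infty$. The only pieces of $\|y_n\|_\HH^2$ not yet controlled are the boundary terms $K_j|u_n'(L)|^2$ and $K_j|u_n(L)|^2$; here I would invoke the boundary conditions $u_n(0)=u_n'(0)=0$ encoded in $\tilde H_0^2(0,L)$, writing $u_n'(L)=\int_0^L u_n''(x)\d x$ and $u_n(L)=\int_0^L u_n'(x)\d x$, and applying Cauchy--Schwarz to control both by $\|u_n''\|_{L^2}$ (equivalently, via the continuous embedding $\tilde H_0^2(0,L)\hookrightarrow C^1([0,L])$ from Lemma~\ref{le_81}).

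The main subtlety, and the reason assumption \eqref{v:c_2} is essential, is that $H$ only controls $V_j(z_j)$ pointwise; without a growth condition on $V_j$ at infinity, $z_{j,n}$ could escape to infinity while $H(y_n)$ stays bounded. By contrast, for the boundary values $u_n'(L), u_n(L)$ no analogous coercivity is assumed on $V_{k_j}$, but this gap is harmless: they are automatically controlled by $\|u_n''\|_{L^2}$ thanks to the clamped-end conditions at $x=0$. Once this observation is in place, the proof reduces to bookkeeping of non-negative quantities, with no genuine analytical obstacle.
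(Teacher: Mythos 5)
Your proof is correct and follows essentially the same route as the paper: the only genuinely nontrivial point is that boundedness of $V_j(z_{j,n})$ is equivalent to boundedness of $z_{j,n}$, which is exactly where \eqref{v:c_2} enters, and this is all the paper's (one-line) proof records. Your additional bookkeeping—positivity of the summands of $H$, continuity of $V_{k_j},V_j$ on bounded sets, and control of $u_n(L),u_n'(L)$ by $\|u_n''\|_{L^2}$ via the clamped-end conditions (the embedding $\tilde H_0^2(0,L)\inj C^1([0,L])$)—correctly fills in the details the paper leaves implicit.
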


\begin{proof}
 It suffices to notice that $\{V_j(z_{j,n})\}_{n \in \N}$ is unbounded iff $\{z_{j,n}\}_{n \in \N}$ is unbounded.
\end{proof}

We now define the generalized time derivative along the mild solution $y(t)$ of \eqref{ivp}, i.e.~for any initial value $y_0\in\HH$:
\[\dot H(y_0):=\limsup_{t\searrow0}\frac{H(y(t))-H(y_0)}t,\]
which may take the value $-\infty$.

\begin{lem}\label{lem82}
 For $y_0\in D(\LL)$ we have $\dot H (y_0) = \frac{\d^{+}}{\d t} H(y(t))|_{t=0} \le 0$, i.e.~$H$ is non-increasing along classical solutions. Here, $\frac{\d^+}{\d t}$ denotes the right derivative.
\end{lem}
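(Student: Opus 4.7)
Since $y_0\in D(\mathcal L)$, Proposition~\ref{pro_61} gives a classical solution $y\in C^1([0,T_{\max});\HH)$ with $y(t)\in D(\mathcal L)$ and $y_t=\mathcal L y$ for all $t\ge 0$. In particular, every component of $y(t)$ is strongly differentiable in $t$, so the right derivative in the definition of $\dot H(y_0)$ is an honest derivative obtained by the chain rule applied to the smooth functional $H$. The plan is therefore to compute $\tfrac{d^+}{dt}H(y(t))|_{t=0}$ explicitly, show that all the ``lossless'' couplings cancel out through integration by parts and the passivity identities, and collect what remains, which will manifestly be $\le 0$.

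I would carry out the computation term by term. The bending energy $\tfrac{\Lambda}{2}\int_0^L|u''|^2\,dx$ contributes $\Lambda\int_0^L u''v''\,dx$; integrating by parts twice and using $v(0)=v'(0)=0$ (which hold since $v\in\tilde H_0^2$) yields boundary contributions $\Lambda v'(L)u''(L)-\Lambda v(L)u'''(L)$ together with $-\Lambda\int_0^L v\,u^{\rom{4}}\,dx$, and the latter is exactly $+\rho\int_0^L v\,v_t\,dx$ by the PDE, so it cancels the kinetic energy contribution. For the tip-mass terms I would substitute $\xi=Jv'(L)$, $\psi=Mv(L)$ (valid on $D(\mathcal L)$) and the formulas for $\xi_t,\psi_t$ read off from $\mathcal L$; this produces $-\Lambda v'(L)u''(L)+\Lambda v(L)u'''(L)$, cancelling the boundary contributions from the bending energy, plus the additional terms $-v'(L)[c_1(z_1)+d_1(v'(L))+k_1(u'(L))]-v(L)[c_2(z_2)+d_2(v(L))+k_2(u(L))]$. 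The derivatives of $V_{k_j}(u'(L))$ and $V_{k_j}(u(L))$ precisely cancel the $k_j$-terms above. Finally, $\tfrac{d}{dt}V_j(z_j)=\nabla V_j(z_j)\cdot a_j(z_j)+\nabla V_j(z_j)\cdot b_j(z_j)\cdot(v'(L)$ or $v(L))$, and by the KYP identity \eqref{c:c_3} the second summand equals $c_j(z_j)\cdot(v'(L)$ or $v(L))$, cancelling the $c_j$-terms left over from the tip-mass step.

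After all these cancellations only the genuinely dissipative contributions remain, namely
\[
\dot H(y_0)=\nabla V_1(z_1)\!\cdot\! a_1(z_1)+\nabla V_2(z_2)\!\cdot\! a_2(z_2)-v'(L)\,d_1(v'(L))-v(L)\,d_2(v(L)).
\]
By assumption \eqref{a:c_3} the first two terms are $\le 0$, and by \eqref{d:c_1} together with $d_j(0)=0$ the last two terms are $\le 0$ as well. This yields $\dot H(y_0)\le 0$, which is the claim.

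There is no serious obstacle: the structure of $H$ was chosen exactly so that the ``lossless'' couplings between beam, tip mass, springs, and passive dynamic controllers telescope. The only care required is bookkeeping of the boundary terms in the double integration by parts (using $\tilde H^2_0$ at $x=0$) and checking that the passivity identity \eqref{c:c_3} is what eliminates the $c_j$-terms; once this is set up, the final inequality is immediate from (A1) and (A3).
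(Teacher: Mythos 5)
Your proposal is correct and follows essentially the same route as the paper: the paper's proof likewise uses that classical solutions admit a right derivative of $H(y(t))$, states the resulting expression $\dot H(y_0)=a_1(z_1)\cdot\nabla V_1(z_1)+a_2(z_2)\cdot\nabla V_2(z_2)-d_1(v'(L))v'(L)-d_2(v(L))v(L)$ (obtained via \eqref{c:c_3}), and concludes non-positivity from \eqref{a:c_3} and \eqref{d:c}. You merely spell out the intermediate integration-by-parts and cancellation steps that the paper leaves implicit, and your bookkeeping is accurate.
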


\begin{proof}
 For $y_0\in D(\LL)$ the corresponding solution $y(t)$ of \eqref{ivp} is classical, and therefore has a continuous right derivative on $[0,T_{\mathrm{max}}(y_0))$. So we can directly compute
\begin{align*}
 \dot H(y_0)&=\frac{\mathrm{d}^+}{\mathrm{d}t}H(y(t))\big|_{t=0}\\
      &=a_1(z_1)\cdot\nabla V_1(z_1)+a_2(z_2)\cdot\nabla V_2(z_2)-d_1(v'(L))v'(L)-d_2(v(L))v(L).
\end{align*}
Thereby we have used \eqref{c:c_3}. The non-positivity of the generalized time derivative of the storage function $H$ can be directly concluded from \eqref{a:c_3} and \eqref{d:c}. Clearly, this is also a consequence of the passivity property of the feedback interconnected system according to Figure~\ref{fb_inter}. This concludes the proof.
\end{proof}

\begin{cor}
 For $y_0\in D(\LL)$ the corresponding classical solution $y(t)$ of \eqref{ivp} is global, i.e.~it exists for all  $t\in [0,\infty)$.
\end{cor}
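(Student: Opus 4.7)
The plan is to combine the blow-up alternative from Proposition~\ref{pro_61} with the Lyapunov estimate from Lemma~\ref{lem82} and the equivalence in Lemma~\ref{lem83}. Suppose, for contradiction, that $y_0\in D(\LL)$ but $T_{\mathrm{max}}(y_0)<\infty$. By Proposition~\ref{pro_61}, the classical solution $y(t)$ then satisfies $\|y(t)\|_\HH\to\infty$ as $t\nearrow T_{\mathrm{max}}(y_0)$.

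On the other hand, since $y_0\in D(\LL)$, the solution $y(t)$ lies in $D(\LL)$ for all $t\in[0,T_{\mathrm{max}}(y_0))$ (semigroup invariance for classical solutions), so Lemma~\ref{lem82} applies at every point $t$ along the trajectory after a time shift. Because $H\circ y$ is continuous on $[0,T_{\mathrm{max}}(y_0))$ (by Lemma~\ref{le_81}) and its right derivative is non-positive everywhere, $H(y(t))$ is a non-increasing function of $t$; hence $H(y(t))\le H(y_0)$ for all $t\in[0,T_{\mathrm{max}}(y_0))$.

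Now Lemma~\ref{lem83} converts this uniform bound on $H$ into a uniform bound on $\|y(t)\|_\HH$, contradicting the blow-up $\|y(t)\|_\HH\to\infty$. Therefore $T_{\mathrm{max}}(y_0)=\infty$.

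The only mildly delicate point is justifying that the pointwise non-positivity of the right derivative (Lemma~\ref{lem82} at $t=0$, applied along the orbit) actually yields monotonicity of $t\mapsto H(y(t))$ on the whole existence interval. This follows from the autonomy of \eqref{ivp}: for any $t_0\in[0,T_{\mathrm{max}}(y_0))$, the shifted function $\tilde y(t):=y(t_0+t)$ is the classical solution with initial data $y(t_0)\in D(\LL)$, so Lemma~\ref{lem82} gives $\frac{\d^+}{\d t}H(y(t))|_{t=t_0}\le 0$. Combined with the continuity of $H\circ y$, a standard one-sided derivative criterion then ensures $H(y(\cdot))$ is non-increasing, which is all that the argument requires.
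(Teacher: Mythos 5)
Your proposal is correct and follows essentially the same route as the paper: monotonicity of $H$ along classical trajectories (Lemma~\ref{lem82}), the equivalence of bounded $H$ and bounded norm (Lemma~\ref{lem83}), and the blow-up alternative of Proposition~\ref{pro_61}. The extra care you take in propagating the $t=0$ statement of Lemma~\ref{lem82} along the orbit via autonomy and a one-sided derivative criterion is a detail the paper subsumes in the phrase ``$H$ is non-increasing along classical solutions,'' so there is no substantive difference.
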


\begin{proof}
 According to Lemma~\ref{lem82}, $H$ is non-increasing along $y(t)$. Thus, according to Lemma~\ref{lem83} no blowup occurs in $y(t)$, and we have according to Proposition~\ref{pro_61} that $T_{\mathrm{max}}(y_0)=\infty$.
\end{proof}

Since $\mathcal N$ is locally Lipschitz continuous and $D(\LL)\subset\HH$ is dense, we can apply Proposition~4.3.7 (ii) in \cite{Cazenave:Haraux} for the approximation of mild (non-classical) solutions:

\begin{prop}\label{prp_2}
 Let $y_0\in \HH$ and $\{y_{0,n}\}_{n\in\N}\subset D(\LL)$  be such that $y_{0,n}\to y_0$ in $\HH$. Denote by $y_n(t)$ the global classical solution of \eqref{ivp} to the initial value $y_{0,n}$ and by $y(t)$ the mild solution corresponding to the initial value $y_0$. Then $y_n(t)\to y(t)$ in $C([0,T];\HH)$ for any $T \in (0,T_{\mathrm{max}}(y_0))$.
\end{prop}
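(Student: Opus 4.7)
The plan is to apply Duhamel's formula \eqref{milde} to both $y_n(t)$ and $y(t)$, subtract, and exploit the contractivity of $(\e^{t{\A}})_{t\ge 0}$ from Theorem~\ref{teo73} together with the local Lipschitz continuity of $\mathcal N$ on $\HH$ (which follows from the $C^2$-regularity of $\alpha_j,\beta_j,\gamma_j,\delta_j,\kappa_j$ and the continuous embedding $\tilde H^2_0(0,L)\inj C^1([0,L])$). The key analytic ingredient is then a Gronwall argument on a suitable bootstrap interval.

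First, I would fix $T\in(0,T_{\mathrm{max}}(y_0))$ and set $M:=\sup_{t\in[0,T]}\|y(t)\|_\HH$, which is finite since $y\in C([0,T];\HH)$. Let $L_0=L_0(M+1)$ denote a Lipschitz constant for $\mathcal N$ on the closed ball $\{w\in\HH:\|w\|_\HH\le M+1\}$. For each $n$, define the bootstrap time
\[
t_n^\star:=\sup\bigl\{t\in[0,\min(T,T_{\mathrm{max}}(y_{0,n}))):\|y_n(s)\|_\HH\le M+1\text{ for all }s\in[0,t]\bigr\}.
\]
Since $\|y_{0,n}\|_\HH\to\|y_0\|_\HH\le M$, continuity of $y_n$ ensures $t_n^\star>0$ for all sufficiently large $n$. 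On $[0,t_n^\star]$ both $y_n$ and $y$ lie in the $(M+1)$-ball, so Duhamel \eqref{milde} and $\|\e^{s{\A}}\|_{\mathcal L(\HH)}\le1$ yield
\[
\|y_n(t)-y(t)\|_\HH\le\|y_{0,n}-y_0\|_\HH+L_0\int_0^t\|y_n(s)-y(s)\|_\HH\d s,
\]
and Gronwall's lemma gives $\|y_n(t)-y(t)\|_\HH\le\|y_{0,n}-y_0\|_\HH\e^{L_0 T}$ throughout $[0,t_n^\star]$.

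The main obstacle, and the only nontrivial point, is closing the bootstrap to conclude $t_n^\star=T$. For $n$ large the right-hand side above is smaller than $1/2$, so $\|y_n(t)\|_\HH\le M+1/2<M+1$ on $[0,t_n^\star]$. By continuity of $y_n$ this \emph{strict} inequality prevents $t_n^\star$ from being a proper interior endpoint of $[0,\min(T,T_{\mathrm{max}}(y_{0,n})))$; hence $t_n^\star=\min(T,T_{\mathrm{max}}(y_{0,n}))$. It remains to exclude the possibility $T_{\mathrm{max}}(y_{0,n})\le T$: in that case $\|y_n(t)\|_\HH$ would remain bounded by $M+1$ as $t\nearrow T_{\mathrm{max}}(y_{0,n})<\infty$, contradicting the blow-up alternative of Proposition~\ref{pro_61}. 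Therefore $T_{\mathrm{max}}(y_{0,n})>T$ and $t_n^\star=T$ for all $n$ large enough, and the Gronwall bound then immediately gives
\[
\sup_{t\in[0,T]}\|y_n(t)-y(t)\|_\HH\le\|y_{0,n}-y_0\|_\HH\,\e^{L_0T}\xrightarrow{n\to\infty}0,
\]
which is the claimed convergence in $C([0,T];\HH)$.
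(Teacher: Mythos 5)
Your argument is correct: it is the standard continuous-dependence proof via Duhamel's formula, contractivity of $\e^{t\A}$, local Lipschitz continuity of $\mathcal N$, and a Gronwall--bootstrap argument, which is exactly the content of Proposition~4.3.7(ii) in the reference \cite{Cazenave:Haraux} that the paper invokes for this statement. The only remark is that the exclusion of $T_{\mathrm{max}}(y_{0,n})\le T$ is not needed here, since the classical solutions $y_n$ are already known to be global, but including it is harmless and makes the argument self-contained.
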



\begin{trm}\label{trm_1}
 For any $y_0\in\HH$ the corresponding solution $y(t)$ of the initial value problem \eqref{ivp} is global in time. Furthermore, $t\mapsto H(y(t))$ is non-increasing on $\R^+$ and $y(t)$ is uniformly bounded in $\HH$ on $[0, \infty)$.
\end{trm}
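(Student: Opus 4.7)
The plan is to first establish the conclusion for classical solutions and then extend to general mild solutions by density, using the continuity of $H$ together with the blow-up alternative of Proposition~\ref{pro_61}.

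For $y_0\in D(\LL)$ we already have global existence by the preceding Corollary and $\dot H(y_0)\le 0$ at $t=0$ by Lemma~\ref{lem82}. Since for semilinear equations of the form $y_t=\A y+\mathcal N(y)$ with $\A$ generating a $C_0$-semigroup and $\mathcal N$ locally Lipschitz, the classical solution starting in $D(\LL)$ stays in $D(\LL)$ for all times (this is the content of Theorem~6.1.5 in \cite{pazy}, which is already used in Proposition~\ref{pro_61}), Lemma~\ref{lem82} can be applied at every $t\ge 0$ to give
\[
\frac{\mathrm d^+}{\mathrm d t}H(y(t))\le 0,\qquad t\in[0,\infty).
\]
Since $t\mapsto H(y(t))$ is continuous and has non-positive right derivative on $[0,\infty)$, it is non-increasing; in particular $H(y(t))\le H(y_0)$ for all $t\ge 0$.

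For an arbitrary $y_0\in\HH$, choose a sequence $\{y_{0,n}\}_{n\in\N}\subset D(\LL)$ with $y_{0,n}\to y_0$ in $\HH$ (such a sequence exists since $D(\LL)$ is dense in $\HH$). Let $y_n(t)$ denote the corresponding global classical solutions and $y(t)$ the mild solution on $[0,T_{\mathrm{max}}(y_0))$. Fix any $T\in(0,T_{\mathrm{max}}(y_0))$. By Proposition~\ref{prp_2}, $y_n\to y$ in $C([0,T];\HH)$, and by Lemma~\ref{le_81} the functional $H$ is continuous, so $H(y_n(t))\to H(y(t))$ uniformly on $[0,T]$. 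Since each $t\mapsto H(y_n(t))$ is non-increasing by the first step, the limit $t\mapsto H(y(t))$ is non-increasing on $[0,T]$, and letting $T\nearrow T_{\mathrm{max}}(y_0)$ shows that it is non-increasing on the whole interval of existence, with $H(y(t))\le H(y_0)$ throughout.

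It remains to rule out blow-up. From $H(y(t))\le H(y_0)<\infty$ on $[0,T_{\mathrm{max}}(y_0))$ and the equivalence in Lemma~\ref{lem83}, the norm $\|y(t)\|_\HH$ stays uniformly bounded on $[0,T_{\mathrm{max}}(y_0))$. The blow-up alternative in Proposition~\ref{pro_61} then forces $T_{\mathrm{max}}(y_0)=\infty$, yielding both global-in-time existence and the uniform $\HH$-bound on $[0,\infty)$, while the non-increase of $H$ just established completes the proof. The main subtlety, and the only step requiring some care, is the passage from the classical to the mild setting: without the density of $D(\LL)$, the continuity of $H$ (which is delicate because of the pointwise boundary terms $k_j(u(L))$, $k_j(u'(L))$ and is precisely what Lemma~\ref{le_81} provides via the embedding $H^2(0,L)\hookrightarrow C^1([0,L])$), and the convergence in $C([0,T];\HH)$ from Proposition~\ref{prp_2}, one could not transfer the Lyapunov inequality from $y_n$ to $y$.
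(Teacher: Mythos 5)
Your proposal is correct and follows essentially the same route as the paper: non-increase of $H$ along classical solutions (Lemma~\ref{lem82}), transfer to mild solutions via the density of $D(\LL)$, the approximation result of Proposition~\ref{prp_2} and the continuity of $H$ (Lemma~\ref{le_81}), and then exclusion of blow-up through Lemma~\ref{lem83} and the alternative in Proposition~\ref{pro_61}. Your only addition is to spell out explicitly why Lemma~\ref{lem82} applies at every $t\ge 0$ (invariance of $D(\LL)$ along classical solutions, Pazy Theorem~6.1.5), which the paper leaves implicit.
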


\begin{proof}
Consider $y_0\in \HH$ and a sequence $\{y_{0,n}\}\subset D(\LL)$ with $y_{0,n}\to y_0$ in $\HH$. Due to the convergence $y_n(t)\to y(t)$ for all  $ t \in [0,T_{\mathrm{max}}(y_0))$ shown in Proposition~\ref{prp_2} and the continuity of $H$, we get $H(y_n(t))\to H(y(t))$ for all $0\le t < T_{\mathrm{max}}(y_0)$. Since $H$ is non-increasing along every $y_n(t)$, this implies also that $t\mapsto H(y(t))$ is non-increasing on $[0,T_{\mathrm{max}}(y_0))$.
Thus, according to Lemma~\ref{lem83} no blowup of $y(t)$ can occur at $t = T_{\mathrm{max}}(y_0)$. So, according to Proposition~\ref{pro_61} the solution is global in time. Uniform boundedness of $y(t)$ now follows from Lemma~\ref{lem83}.
\end{proof}

\begin{cor}
 The function $H$ is a Lyapunov function for the initial value problem \eqref{ivp}.
\end{cor}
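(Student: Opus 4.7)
The plan is to observe that this corollary is essentially a bookkeeping step that collects several facts already established in the preceding lemmas and the theorem. A function $H : \HH \to \R$ qualifies as a Lyapunov function for \eqref{ivp} if it is continuous, positive definite (i.e.\ $H(y) \geq 0$ with $H(y) = 0 \iff y = 0$), and non-increasing along the semiflow generated by $\LL$. I would verify these three properties in order.

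First, I would invoke Lemma~\ref{le_81} directly for continuity. Next, for positive definiteness, I would examine the definition of $H$ term by term. Each summand is manifestly non-negative: the two integrals over $[0,L]$, the kinetic tip-mass terms $|\xi|^2/(2J)$ and $|\psi|^2/(2M)$, the potentials $V_{k_j}(\cdot)$ are non-negative by assumption \eqref{k:c_2}, and $V_j(z_j) \geq 0$ by \eqref{v:c_1}. Hence $H(y) \geq 0$, and clearly $H(0) = 0$. For the converse implication, $H(y) = 0$ forces every summand to vanish: from $\int_0^L |u''|^2\,dx = 0$ together with the boundary conditions $u(0) = u'(0) = 0$ built into the space $\tilde H_0^2(0,L)$ we obtain $u \equiv 0$; from $\int_0^L |v|^2\,dx = 0$ we get $v \equiv 0$; $\xi = \psi = 0$ is immediate; $V_{k_j}(u^{(j-1)}(L)) = 0$ combined with \eqref{k:c_2} yields the trace conditions (automatically satisfied here since $u \equiv 0$); and $V_j(z_j) = 0$ with \eqref{v:c_1} gives $z_j = 0$. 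Thus $y = 0$.

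Finally, the monotonicity along trajectories is precisely the content of Theorem~\ref{trm_1}, which states that $t \mapsto H(y(t))$ is non-increasing on $\R^+$ for every mild solution of \eqref{ivp}, regardless of whether $y_0 \in D(\LL)$ or only $y_0 \in \HH$. Combining these three observations completes the proof.

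There is no real obstacle here; the work has been done in Lemmas~\ref{le_81}--\ref{lem82} and Theorem~\ref{trm_1}. The only point worth spelling out is the positive-definiteness argument, since the excerpt only asserts $H \geq 0$ explicitly and leaves the stronger statement $H(y) = 0 \Rightarrow y = 0$ implicit; this is the single place where one genuinely uses the boundary conditions encoded in $\tilde H_0^2(0,L)$ together with assumptions (A2) and (A3).
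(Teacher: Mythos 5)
Your proposal is correct and matches the paper's (implicit) justification: the corollary is stated without proof precisely because it is the collection of facts you cite — continuity from Lemma~\ref{le_81}, non-negativity of each summand of $H$, and the monotonicity of $t\mapsto H(y(t))$ along all mild solutions from Theorem~\ref{trm_1}. Your explicit verification that $H(y)=0$ forces $y=0$ (using $u(0)=u'(0)=0$ in $\tilde H_0^2$, \eqref{k:c_2} and \eqref{v:c_1}) is a harmless and correct elaboration of what the paper leaves implicit.
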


\begin{rem}\label{rem48}
Since all mild solutions are global, Proposition~\ref{prp_2} holds for any $T \in (0,\infty)$.
\end{rem}

For every $y_0\in\HH$ and the corresponding mild solution $y(t)$ we define $S(t)y_0:=y(t)$ for all $t\ge 0$. The family $S\equiv (S(t))_{t\ge 0}$ is a strongly continuous semigroup of nonlinear (bounded, continuous) operators in $\HH$, cf.~Theorem~9.3.2 in \cite{Cazenave:Haraux}.

\begin{rem}\label{mark}
Since \eqref{v:c_2} is only needed to show that no blowup of the solution occurs, we may replace it by the weaker assumption
\[\lim_{|z_j|\to\infty}V_j(z_j)>H(y_0)\tag{\ref{v:c_2}'},\]
depending on the initial condition $y_0$ for the problem \eqref{ivp}. Thereby we argue as follows: According to Theorem~\ref{trm_1} the function $t\mapsto H(y(t))$ is non-increasing (this is independent of \eqref{v:c_2}), which ensures that no blowup can occur in any component of $y(t)$ except for $z_j$. If now $z_1(t)$ or $z_2(t)$ would blowup, we would get $\lim_{t\to\infty}H(y(t))>H(y_0)$ according to (\ref{v:c_2}'). So $H(y(t))$ could not be non-increasing, which is a contradiction. So (\ref{v:c_2}') is sufficient to show that no blowup occurs and that the solution is global in time.
\end{rem}

\section{$\omega$-limit Set}\label{S5}

In the following, $S$ is the strongly continuous (nonlinear) semigroup generated by $\mathcal{A}$ on $\HH$, defined at the end of the previous section. In this section, we investigate possible $\omega$-limit sets of $S$. However, non-emptiness of the $\omega$-limit sets will only be discussed in the subsequent sections. For $y_0\in\HH$ we define the trajectory $\gamma(y_0)$ by
\[\gamma(y_0):=\bigcup_{t\ge 0}S(t)y_0.\]

\begin{de}[$\omega$-limit set]
Given the semigroup $S$, the $\omega$-limit set for $y_0\in \HH$ is denoted by $\omega(y_0)$, and is the following set:
\begin{align*}
\omega(y_0):=\{y\in \HH&: \exists \{t_n\}_{n\in\N}\subset\R^+, \lim_{n\to\infty}t_n =+\infty\wedge \, \lim_{n\to\infty}S(t_n)y_0 = y\}.
\end{align*}
It is possible that $\omega(y_0)=\emptyset$.
\end{de}

According to Proposition~9.1.7 in \cite{Cazenave:Haraux} we have:
\begin{lem}\label{lem72}
For $y_0\in\HH$ the set $\omega(y_0)$ is $S$-invariant, i.e.~$S(t)\omega(y_0)\subseteq \omega(y_0)$ for all $t\ge 0$.
\end{lem}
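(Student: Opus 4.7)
The plan is to argue directly from the semigroup property and the continuity of each nonlinear map $S(t)$, which has just been recorded at the end of Section~\ref{secc6} (citing Theorem~9.3.2 in \cite{Cazenave:Haraux}). Pick $y\in\omega(y_0)$, so by definition there is a sequence $t_n\to\infty$ with $S(t_n)y_0\to y$ in $\HH$. Fix an arbitrary $t\ge 0$ and set $s_n:=t+t_n$; clearly $s_n\to\infty$ as well.

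Next I would apply the semigroup identity $S(s_n)y_0=S(t+t_n)y_0=S(t)\bigl(S(t_n)y_0\bigr)$, which is legitimate since all solutions are global in time by Theorem~\ref{trm_1} and the family $(S(t))_{t\ge 0}$ is a genuine semigroup of operators on $\HH$. Using the fact that the nonlinear operator $S(t):\HH\to\HH$ is continuous (this is exactly the property quoted just before the definition of $\omega(y_0)$), the convergence $S(t_n)y_0\to y$ is transported to
\[
S(s_n)y_0=S(t)\bigl(S(t_n)y_0\bigr)\longrightarrow S(t)y\quad\text{in }\HH.
\]
Since $s_n\to\infty$, this exhibits $S(t)y$ as a limit of the required type, so $S(t)y\in\omega(y_0)$. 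As $y\in\omega(y_0)$ and $t\ge 0$ were arbitrary, we conclude $S(t)\omega(y_0)\subseteq\omega(y_0)$ for every $t\ge 0$.

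There is no real obstacle here: the only non-routine input is the continuity of each nonlinear map $S(t)$, which is not automatic for a mild semigroup associated with a locally Lipschitz nonlinearity on a maximal interval, but which is available for the global flow constructed in the previous section. If desired, one could shorten the argument to a single sentence by invoking Proposition~9.1.7 in \cite{Cazenave:Haraux} directly, since the hypotheses of that result (global existence, continuous dependence on initial data in $C([0,T];\HH)$ as in Proposition~\ref{prp_2} and Remark~\ref{rem48}) have all been verified.
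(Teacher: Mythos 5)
Your argument is correct and is essentially the same as the paper's: the paper simply invokes Proposition~9.1.7 in \cite{Cazenave:Haraux}, whose proof is exactly your computation ($S(t+t_n)y_0=S(t)S(t_n)y_0\to S(t)y$ by continuity of the globally defined nonlinear operators $S(t)$). You correctly identify the only substantive inputs, namely global existence (Theorem~\ref{trm_1}) and continuity of $S(t)$ on $\HH$, both of which were established in Section~\ref{secc6}.
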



Let us consider now some fixed $y_0\in\HH$. According to the results of Section \ref{secc6}, the function $t\mapsto H(S(t)y_0)$ is non-increasing, and bounded from below by $0$. Therefore, the following limit exists:
\begin{equation}\label{frak_h}
 \mathfrak H(y_0):=\lim_{t\to\infty} H(S(t)y_0)\ge 0.
\end{equation}

\begin{lem}\label{lem73}
 Suppose $\omega(y_0)\neq \emptyset$. Then there holds
\[\forall y\in\omega(y_0):\quad H(y)=\mathfrak H(y_0).\]
In particular, $\dot H(y)=0$ for all $y\in\omega(y_0)$.
\end{lem}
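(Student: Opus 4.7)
The plan is to exploit three ingredients already available: the definition of $\omega(y_0)$, the continuity of $H$ on $\HH$ (Lemma~\ref{le_81}), and the $S$-invariance of $\omega(y_0)$ (Lemma~\ref{lem72}).

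First I would fix $y\in\omega(y_0)$ and pick a sequence $t_n\to\infty$ with $S(t_n)y_0\to y$ in $\HH$. Since $H$ is continuous on $\HH$, $H(S(t_n)y_0)\to H(y)$. On the other hand the whole function $t\mapsto H(S(t)y_0)$ is non-increasing by Theorem~\ref{trm_1} and bounded below by $0$, hence the full limit exists and equals $\mathfrak H(y_0)$ by the definition \eqref{frak_h}. Along the subsequence $\{t_n\}$ the limit must agree, so $H(y)=\mathfrak H(y_0)$.

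For the second assertion I would invoke Lemma~\ref{lem72}: $S(t)y\in\omega(y_0)$ for every $t\ge 0$. Applying the identity just proved to $S(t)y$ in place of $y$ gives $H(S(t)y)=\mathfrak H(y_0)$ for all $t\ge 0$. Thus $t\mapsto H(S(t)y)$ is the constant function $\mathfrak H(y_0)$, and plugging this into the definition
\[
\dot H(y)=\limsup_{t\searrow 0}\frac{H(S(t)y)-H(y)}{t}
\]
yields $\dot H(y)=0$.

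There is no real obstacle here; the only subtlety is that $\dot H$ is defined as a $\limsup$ of difference quotients of the mild solution starting at $y$, not as a classical derivative, but since the function is literally constant on $[0,\infty)$ this makes no difference. All the work is in the preparatory results already established (continuity of $H$, monotonicity along mild solutions via the approximation Proposition~\ref{prp_2}, and $S$-invariance of $\omega(y_0)$); the present lemma is essentially a one-line consequence of them.
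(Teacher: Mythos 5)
Your argument is correct and follows essentially the same route as the paper: continuity of $H$ along a sequence $S(t_n)y_0\to y$ combined with the existence of the monotone limit \eqref{frak_h} gives $H(y)=\mathfrak H(y_0)$. You additionally spell out the ``in particular'' claim via the $S$-invariance of $\omega(y_0)$ from Lemma~\ref{lem72}, which the paper leaves implicit; that is exactly the intended justification.
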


\begin{proof}
For every $y\in\omega(y_0)$ there exists a sequence $\{t_n\}\subset\R^+$ such that $S(t_n)y_0\to y$. Since $H$ is continuous, cf.~Section \ref{secc6}, this implies that $H(y)=\lim_{n\to\infty}H(S(t_n)y_0)$. Due to \eqref{frak_h} the right hand side equals $\mathfrak H(y_0)$, and the result follows.
\end{proof}

We can use this lemma to identify the possible $\omega$-limit sets by investigating trajectories along which the Lyapunov function $H$ is constant.
\begin{lem}\label{srddx}
Let $y\in \HH$ such that $H(S(t)y)=\mathfrak H(y)$ for all $t\ge0$, i.e.~$H$ is constant along $\gamma(y)$. Then $\gamma(y)\subset \{y\in\HH:y=[u,v,0,0,0,0]^\top\}$.
\end{lem}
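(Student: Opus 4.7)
The strategy is to exploit the pointwise formula for $\dot H$ from Lemma~\ref{lem82} in an integrated form that survives passage to a limit of classical solutions. On $D(\LL)$ the boundary traces $v'(L)$ and $v(L)$ appearing in that formula coincide with $\xi/J$ and $\psi/M$, so I introduce the functional
$$F(y):=\nabla V_1(z_1)\cdot a_1(z_1)+\nabla V_2(z_2)\cdot a_2(z_2)-d_1(\xi/J)\,\xi/J-d_2(\psi/M)\,\psi/M,$$
which is continuous on all of $\HH$ because $V_j,a_j,d_j\in C^2$ and the coordinate projections $y\mapsto z_j,\xi,\psi$ are bounded linear maps. On $D(\LL)$, $F$ agrees with the expression from Lemma~\ref{lem82}, and since the right derivative of $H\circ y_n$ is continuous for any classical solution $y_n(\cdot):=S(\cdot)y_{0,n}$ with $y_{0,n}\in D(\LL)$, one has
$$H(y_n(t))-H(y_{0,n})=\int_0^t F(y_n(s))\,\d s,\qquad t\ge 0.$$

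Next, pick a sequence $\{y_{0,n}\}\subset D(\LL)$ with $y_{0,n}\to y$ in $\HH$ (possible by density of $D(\LL)$, which is a consequence of Theorem~\ref{teo73}). Proposition~\ref{prp_2} together with Remark~\ref{rem48} yields $y_n\to S(\cdot)y$ uniformly on $[0,T]$ for every $T>0$; these trajectories therefore lie in a bounded set of $\HH$, and uniform continuity of $F$ on bounded sets gives $F(y_n(s))\to F(S(s)y)$ uniformly on $[0,T]$. Combined with continuity of $H$ (Lemma~\ref{le_81}), passing $n\to\infty$ produces
$$H(S(t)y)=H(y)+\int_0^t F(S(s)y)\,\d s,\qquad\forall t\ge 0.$$

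By hypothesis the left-hand side equals $H(y)$ for every $t$, so the continuous function $s\mapsto F(S(s)y)$ integrates to zero on every $[0,t]$ and is therefore identically zero. Each of the four summands in $F$ is non-positive: the first two by assumption \eqref{a:c_3}, the latter two because (A1) implies $d_j(s)s>0$ for $s\ne 0$. Thus each summand vanishes separately along $\gamma(y)$. Strict negativity in \eqref{a:c_3} forces $z_1(s)=z_2(s)=0$, and the strict sign of $d_j(s)s$ forces $\xi(s)=\psi(s)=0$, so $\gamma(y)\subset\{[u,v,0,0,0,0]^\top\}$ as claimed.

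The principal subtlety, and the reason the formula of Lemma~\ref{lem82} cannot be used verbatim, is that a mild solution has only $v\in L^2(0,L)$, so the point values $v(L)$ and $v'(L)$ are not meaningful. Replacing them by $\psi/M$ and $\xi/J$ -- which is legitimate on $D(\LL)$ and yields the $\HH$-continuous functional $F$ -- is precisely what allows the density/approximation argument to go through.
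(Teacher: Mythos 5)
Your proof is correct and follows essentially the same route as the paper: approximate the mild solution by classical ones, pass the dissipation formula of Lemma~\ref{lem82} (which only involves the finite-dimensional components $z_j,\xi,\psi$) to the limit, and use the strict signs in \eqref{a:c_3} and (A1) to force these components to vanish. The only difference is cosmetic: you work with the time-integrated identity and the continuity of your functional $F$ on $\HH$, whereas the paper establishes $C^1([0,T])$-convergence of $t\mapsto H(S(t)y_n)$ and lets the derivative tend to zero before identifying the limits of the components.
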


\begin{proof}
First, let $y\in D(\LL)$. We know from Lemma~\ref{lem82} and the corresponding proof that for all $t\ge0$
\begin{align}
 \frac{\d}{\d t} H(S(t)y)&= a_1(z_1)\cdot\nabla V_1(z_1)+a_2(z_2)\cdot\nabla V_2(z_2)\label{hdot}\\
 &\quad -d_1(v'(L))v'(L)-d_2(v(L))v(L),\nonumber
\end{align}
where we omitted the dependence on $t$ on the right hand side, i.e.~$[u, v, z_1,z_2,Jv'(L),Mv(L)]^\top\equiv S(t)y$. Now \eqref{hdot} is required to be zero, and according to \eqref{a:c_3} and \eqref{d:c} this holds iff $\xi=\psi=z_1=z_2=0$.

Now let $y\in \HH\setminus D(\LL)$. Then there is a sequence $\{y_n\}_{n \in \N}\subset D(\LL)$ such that $y_n\to y$ as $n\to \infty$. According to Remark~\ref{rem48} we have $S(t)y_n\to S(t)y$ uniformly on $[0, T]$ for any  $T>0$. Therefore, we have also for the components
\begin{align}
 z_{j,n}(t)\to  z_j(t),&\quad \text{ in }C([0, T];\R^{n_j}),\label{sd_1}\\
 Mv_n(t,L)\to  \psi(t),&\quad \text{ in }C([0, T];\R),\label{sd_2}\\
 Jv_n'(t,L)\to  \xi(t),&\quad \text{ in }C([0, T];\R).\label{sd_3}
\end{align}
Together with \eqref{hdot} this implies
\[\Big\{\frac{\d}{\d t} H(S(t)y_n)\Big\}_{n\in\N}\]
is a Cauchy sequence in $C([0,T];\R)$. Since $H$ is locally Lipschitz continuous in $\HH$, we also have that $\{H(S(t)y_n)\}_{n\in\N}$ is a Cauchy sequence in $C([0,T];\R)$, so altogether $\{H(S(t)y_n)\}_{n\in\N}$ is a Cauchy sequence in $C^1([0,T];\R)$. So  there exists a unique $h(t)\in C^1([0,T];\R)$ such that
\begin{equation}\label{daf2}
 H(S(t)y_n)\to h(t)\quad\text{in }C^1([0,T];\R).
\end{equation}
On the other hand we know that $\lim_{n\to\infty}H(S(t)y_n)= H(S(t)y)=\mathfrak H(y)$ for every $t\ge 0$, and hence $h(t)\equiv \mathfrak H(y)$. Together with \eqref{daf2} this implies $\frac{\d}{\d t} H(S(t)y_n)\to 0$ uniformly on $[0,T]$. By using \eqref{hdot} for every $y_n$ this now yields that in \eqref{sd_1} - \eqref{sd_3} we obtain the limits $z_j(t)=\xi(t)=\psi(t)=0$. So $S(t)y$ has to be of the form $S(t)y=[u(t),v(t),0,0,0,0]^\top$.
\end{proof}

Before we prove that the $\omega$-limit set consists only of the zero solution, we need the following technical lemma:{
\begin{lem} \label{aux}
Let $S$ be the nonlinear semigroup generated by $\mathcal A$. For every $ y_0 \in \HH$ and for all $t>0$ there holds:
\begin{equation} \label{A1}
        \int_0^t{S(s) y_0 \d s} \in D({\LL}),
\end{equation}
and
\begin{equation} \label{A2}
         S(t)y_0 - y_0={\A}\int_0^t S(s)y_0\d s + \int_0^t \mathcal N S(s)y_0\d s.
\end{equation}
\end{lem}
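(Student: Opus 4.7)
The plan is to combine Duhamel's formula \eqref{milde} with the standard identity for $C_0$-semigroups: for any $w\in\HH$ and $r\ge 0$,
\begin{equation*}
\int_0^r \e^{\sigma\A}w\d\sigma \in D(\A) \quad\text{and}\quad \A\int_0^r \e^{\sigma\A}w\d\sigma = \e^{r\A}w - w
\end{equation*}
(Theorem~1.2.4 in \cite{pazy}). Note that $D(\LL)=D(\A)$ and $\mathcal N$ is defined on all of $\HH$, so \eqref{A1} reduces to showing $\int_0^t S(s)y_0\d s\in D(\A)$.

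First I would write out Duhamel's formula $S(\tau)y_0 = \e^{\tau\A}y_0 + \int_0^\tau \e^{(\tau-s)\A}\mathcal N S(s)y_0 \d s$ and integrate with respect to $\tau$ from $0$ to $t$, which is legitimate since $\tau\mapsto S(\tau)y_0$ is continuous on $[0,t]$ (and hence Bochner integrable). For the double integral arising from the nonlinear term, I would apply Fubini's theorem to rewrite
\begin{equation*}
\int_0^t\int_0^\tau \e^{(\tau-s)\A}\mathcal N S(s)y_0\d s\d\tau = \int_0^t\left(\int_0^{t-s}\e^{\sigma\A}\mathcal N S(s)y_0\d\sigma\right)\d s.
\end{equation*}
Both the first summand $\int_0^t \e^{\tau\A}y_0\d\tau$ and the inner integral (for each fixed $s$) lie in $D(\A)$ by the semigroup identity quoted above, with $\A$-images equal to $\e^{t\A}y_0-y_0$ and $\e^{(t-s)\A}\mathcal N S(s)y_0-\mathcal N S(s)y_0$, respectively. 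Continuity of $s\mapsto \mathcal N S(s)y_0$ (via continuity of $\mathcal N$ and of the mild solution) ensures that the outer integrand, together with its $\A$-image, is continuous in $s$ and hence Bochner integrable.

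Invoking the closedness of $\A$, I can pull $\A$ out of the outer integral to obtain $\int_0^t S(\tau)y_0\d\tau\in D(\A)$, which establishes \eqref{A1}, together with
\begin{equation*}
\A\int_0^t S(\tau)y_0\d\tau = \e^{t\A}y_0 - y_0 + \int_0^t\bigl(\e^{(t-s)\A}\mathcal N S(s)y_0 - \mathcal N S(s)y_0\bigr)\d s.
\end{equation*}
Recognizing on the right-hand side the Duhamel representation $\e^{t\A}y_0 + \int_0^t\e^{(t-s)\A}\mathcal N S(s)y_0\d s = S(t)y_0$ immediately yields \eqref{A2}. The only delicate point is the interplay between Fubini and closedness of $\A$, which is handled by working throughout with the continuous integrand $s\mapsto \int_0^{t-s}\e^{\sigma\A}\mathcal N S(s)y_0\d\sigma$; no further structural properties of $\A$ or $\mathcal N$ beyond those already established (Theorem~\ref{teo73} and continuity of $\mathcal N$) are needed.
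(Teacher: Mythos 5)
Your proof is correct and is essentially the argument the paper delegates to Lemma~5.4 of \cite{MSA} (see also \cite{thesis_DS}): integrate Duhamel's formula \eqref{milde} in time, apply Fubini, use the identity $\A\int_0^r \e^{\sigma\A}w\d\sigma=\e^{r\A}w-w$ from Theorem~1.2.4 in \cite{pazy}, and pull $\A$ out of the outer integral by closedness, which hinges exactly on the continuity of $s\mapsto\mathcal N S(s)y_0$ that you verify (recall $D(\A)=D(\LL)$, so \eqref{A1} follows). Your argument uses only that $\A$ generates a $C_0$-semigroup and that $\mathcal N$ is continuous along the global mild solution, consistent with the paper's remark on which properties the proof requires.
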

For the proof we only need the fact that $\mathcal L$ generates a $C_0$-semigroup, and that $\mathcal N$ is differentiable and locally Lipschitz continuous. Hence, the above result still holds true for more general operators $\mathcal L$ and $\mathcal N$, which satisfy the mentioned properties. The proof of Lemma~\ref{aux} is analogous to the proof of Lemma~5.4 in \cite{MSA}, see also \cite{thesis_DS} for a general version of this lemma.}

\begin{trm}\label{om_lise_0}
 Let $\emptyset\neq\Omega\subset\HH$ be an $S$-invariant set such that $H|_\Omega$ is constant. Then $\Omega=\{0\}$. In particular, for any $y_0\in\HH$ either $\omega(y_0)=\emptyset$ or $\omega(y_0)=\{0\}$.
\end{trm}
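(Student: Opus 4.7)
Take any $y \in \Omega$. Since $\Omega$ is $S$-invariant and $H|_\Omega$ is constant, $H$ is constant along the trajectory $\gamma(y)$, and Lemma~\ref{srddx} gives $S(t)y = [u(t,\cdot), v(t,\cdot), 0, 0, 0, 0]^\top$, killing the $z_j, \xi, \psi$ components. My goal is to show $u \equiv v \equiv 0$.

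I would first apply Lemma~\ref{aux} and read off components of $S(t)y - y = \A\int_0^t S(s)y\,\d s + \int_0^t \mathcal N S(s)y\,\d s$. The tip identities $\xi = Jv'(L)$, $\psi = Mv(L)$ encoded in $D(\LL)$, together with $\xi \equiv \psi \equiv 0$, yield $v(t,L) = v'(t,L) = 0$, while the 5th and 6th components of the identity, upon $t$-differentiation, produce the pointwise boundary relations
\[
\Lambda u''(t,L) + k_1(u'(t,L)) = 0, \qquad \Lambda u'''(t,L) - k_2(u(t,L)) = 0.
\]
Combined with $u_t = v$, this forces $u(t,L) \equiv a$ and $u'(t,L) \equiv b$ to be constants in $t$, whence $u''(t,L)$ and $u'''(t,L)$ are $t$-constants as well.

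The central step is $v \equiv 0$. The velocity $v = u_t$ solves the biharmonic wave equation $\rho v_{tt} + \Lambda v^{(4)} = 0$ with clamped-clamped data $v = v' = 0$ at $x \in \{0, L\}$ plus the over-determined conditions $v''(t,L) = v'''(t,L) = 0$, obtained by differentiating the $t$-constants $u''(t,L), u'''(t,L)$ in $t$ (after a regularity bootstrap). Expanding $v(t, \cdot) = \sum_n (\alpha_n\cos(\omega_n t) + \beta_n\sin(\omega_n t))\phi_n(\cdot)$ in the orthonormal eigenbasis $\{\phi_n\}$ of the self-adjoint clamped-clamped biharmonic operator $\tfrac{\Lambda}{\rho}\pd_x^4$ and matching the extra data, linear independence of $\{\cos(\omega_n t),\sin(\omega_n t)\}_n$ forces $\alpha_n\phi_n''(L) = \beta_n\phi_n''(L) = 0$ and similarly for $\phi_n'''(L)$. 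But $\phi_n(L) = \phi_n'(L) = \phi_n''(L) = \phi_n'''(L) = 0$ combined with the 4th-order ODE $\phi_n^{(4)} = (\rho/\Lambda)\omega_n^2 \phi_n$ would force $\phi_n \equiv 0$ by uniqueness; hence $\alpha_n = \beta_n = 0$ for all $n$ and $v \equiv 0$.

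With $v \equiv 0$, $u$ is stationary, $(u^*)^{(4)} \equiv 0$, so $u^*$ is a cubic satisfying the clamped BCs at $x=0$ together with the tip conditions above. Multiplying $(u^*)^{(4)} = 0$ by $u^*$ and integrating by parts twice yields
\[
\Lambda \int_0^L |u^{*''}|^2 \d x + k_1(b) b + k_2(a) a = 0.
\]
Each summand is non-negative (the first trivially; the others via the sign condition $k_j(s) s \geq 0$ embedded in the passivity assumptions (A1)--(A2)), so each vanishes; in particular $\int |u^{*''}|^2 = 0$, forcing $u^{*''} \equiv 0$ and hence $u^* \equiv 0$ by the clamped conditions at $x=0$. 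Thus $y = 0$, and $\Omega \neq \emptyset$ gives $\Omega = \{0\}$. The main obstacle I anticipate is the rigorous justification of the eigenfunction step, because the over-determined BCs $v''(L) = v'''(L) = 0$ are not pointwise defined for $v \in \tilde H^2_0$ alone; the remedy is to restrict first to $y_0 \in D(\LL^2)$---where $v \in \tilde H^4_0$ makes the traces meaningful and the ODE uniqueness argument sharp---and then extend to arbitrary mild solutions via the approximation of Proposition~\ref{prp_2}.
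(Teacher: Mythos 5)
Your overall route is genuinely different from the paper's (which, after Lemma~\ref{srddx} and Lemma~\ref{aux}, linearizes the time-integrated system via the constants $\tilde K_j$, forces $u(t,L)=u'(t,L)=0$ through the unitarity of $\e^{t\A_p}$ plus Gagliardo--Nirenberg growth estimates, bootstraps regularity by iterated time integration, and finishes with Holmgren's uniqueness theorem), but two of your steps have genuine gaps. The first is the regularity fix you propose at the end. An element $y$ of $\Omega$ is only known to lie in $\HH$, so the traces $u''(t,L)$, $u'''(t,L)$, $v''(t,L)$, $v'''(t,L)$ and even $v(t,L)$, $v'(t,L)$ that your over-determined boundary conditions require are not defined; and your remedy --- prove the statement for $y_0\in D(\LL^2)$ and then ``extend to arbitrary mild solutions via Proposition~\ref{prp_2}'' --- does not work, because the approximating initial data taken from the dense set $D(\LL^2)$ will in general not lie in $\Omega$ and will not have $H$ constant along their trajectories. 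Everything you use (vanishing of $z_j,\xi,\psi$, constancy of the tip traces, the extra boundary conditions for $v$) is a consequence of $H$ being constant along the trajectory, and none of it is inherited by the approximants. This is exactly why the paper does not approximate the initial datum but instead gains regularity along the \emph{same} trajectory: the integrated identity of Lemma~\ref{aux} turns the projected system into the linear problem \eqref{ivp_p}, and the iterated antiderivatives $y_n(t)=\int_0^t y_{n-1}\,\mathrm ds+\A_p^{-n}[u_0,v_0]^\top$ are classical (eventually $C^4$) solutions of that linear problem to which Holmgren can be applied; injectivity of $\A_p^{-1}$ then transfers the conclusion back to $[u_0,v_0]$.

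The second gap is your final energy identity. You invoke a pointwise sign condition $k_j(s)s\ge 0$ as ``embedded in (A1)--(A2)'', but it is not: assumption (A2) only requires $k_j'(0)>0$ and positivity of the potentials $V_{k_j}(s)=\int_0^s k_j$, and a $C^2$ spring law can take values of the opposite sign at individual points while keeping $V_{k_j}>0$. Consequently, from $\Lambda\int_0^L|u^{*\prime\prime}|^2\,\mathrm dx+k_1(b)b+k_2(a)a=0$ you cannot conclude that each summand vanishes, and the stationary cubic is not eliminated by this argument under the stated hypotheses. The paper's proof is structured to avoid ever needing this sign information: it establishes $u(t,L)=u'(t,L)=0$ \emph{before} any stationarity argument (Step 2), so the nonlinearities $\kappa_j$ drop out and the endgame is purely a uniqueness statement for the linear beam equation. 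Finally, even granting the regularity, your spectral step needs more work than indicated: you must justify that $[v,v_t]$ evolves under the clamped--clamped group, that the trace series $\sum_n c_n(t)\phi_n''(L)$ may be evaluated term by term, and that coefficients can be isolated in time despite possible eigenvalue multiplicities; the paper's use of Holmgren's theorem bypasses all of this. So while the skeleton (kill $z_j,\xi,\psi$; exploit constancy of the tip traces; reduce to an over-determined linear beam problem) matches the paper, the proposal as written does not close.
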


\begin{proof}
Take a fixed $y_0\in\Omega$, and let $y(t)$ be the corresponding mild solution of \eqref{ivp}. Clearly, $\gamma(y_0)\subset \Omega$, and according to Lemma~\ref{srddx} $y(t)$ is of the form $y(t)=[u(t),v(t),0,0,0,0]^\top$.

\medskip

\underline{Step 1  (linear system for $u(t)$, $v(t)$):} First we note that, according to \eqref{A1}, there holds for all $t\ge 0$:
\begin{align*}
0&=\int_0^t\psi(s)\d s=M\int_0^t v(s,L)\d s= M(u(t,L)-u_0(L)),\\
0&=\int_0^t\xi(s)\d s=J\Big(\int_0^t v(s,x)\d s\Big)'\Big|_{x=L}\\
&= J(u'(t,L)-u_0'(L)).
\end{align*}
Thus $u(t,L)$ and $u'(t,L)$ are constant in time. According to \eqref{A2} the (projected) mild solution $y_p(t)=[u(t),v(t)]^\top$ satisfies the following system (i.e.~the first, second, fifth, and sixth component of \eqref{A2}):
\begin{samepage}
\begin{subequations}\label{nl_ap_system}
\begin{align}
u(t)- u_0 &= \int_0^t v(s,x)\d s,\\
v(t)- v_0 &= -\frac\Lambda\rho \Big(\int_0^t u(s,x)\d s\Big)^\rom{4}\\
0 &= \Lambda\Big(\int_0^t u(s,x)\d s\Big)^{\prime\prime}\Big|_{x=L}\label{5.9c}\\
&\quad+K_1\Big(\int_0^t\!\! u(s,x)\d s\Big)'\Big|_{x=L}\!\!+\int_0^t\!\!\kappa_1(u'(s,L))\d s,\nonumber\\
0 &= -\Lambda\Big(\int_0^t u(s,x)\d s\Big)^{\prime\prime\prime}\Big|_{x=L}\\
&\quad+K_2\Big(\int_0^t \!\!u(s,x)\d s\Big)\Big|_{x=L}+\int_0^t\!\!\kappa_2(u(s,L))\d s.\nonumber
\end{align}
\end{subequations}
\end{samepage}
\noindent Mild solutions satisfy $u\in C(\R^+;\tilde H_0^2(0,L))$. Hence, we can interchange the integration and differentiation in the last term of \eqref{5.9c}. Using the fact that $u^{\prime}(t,L)$ is constant, we have (for $u_0'(L)\neq 0$):
\begin{align*}
\int_0^t \kappa_1(u'(s,L))\d s& = t \kappa_1(u_0'(L))\\
&= \frac{\kappa_1(u_0'(L))}{u_0'(L)}\Big(\int_0^t u(s,x)\d s\Big)'\Big|_{x=L}.
\end{align*}
Next we define the constants (since $\kappa_j(0)=0$):
\begin{align}
\begin{split}\label{5_star}
\tilde K_1&:=K_1+\frac{\kappa_1(u_0'(L))}{u_0'(L)},\quad \text{if }u_0'(L)\neq 0, \text{ else }\tilde K_1:=K_1 ,\\
\tilde K_2&:=K_2+\frac{\kappa_2(u_0(L))}{u_0(L)},\quad \text{if }u_0(L)\neq 0,\text{ else }\tilde K_2:=K_2.
\end{split}
\end{align}
With this we may rewrite \eqref{nl_ap_system} as
\begin{subequations}\label{int_verss}
\begin{align}
u(t)&- u_0 = \int_0^t v(s)\d s,\\
v(t)&- v_0 = -\frac\Lambda\rho \Big(\int_0^t u(s)\d s\Big)^\rom{4}\label{410b}\\
0&= \Lambda\Big(\int_0^t\!\! u(s,x)\d s\Big)^{\prime\prime}\Big|_{x=L}\!\!\!
+\tilde K_1\Big(\int_0^t \!\!u(s,x)\d s\Big)'\Big|_{x=L},\label{5.10c}\\
0&= -\Lambda\Big(\int_0^t \!\!u(s,x)\d s\Big)^{\prime\prime\prime}\Big|_{x=L}\!\!\!
+\tilde K_2\int_0^t \!\!u(s,x)\d s\Big|_{x=L},\label{5.10d}
\end{align}
\end{subequations}
making this system linear. Thus, the projected vector $y_p(t)=[u(t),v(t)]^\top$ is the unique mild solution of
\begin{subequations}\label{ivp_p}
 \begin{align}
   (y_p)_t &= {\A}_p y_p,\label{ivp_p_1}\\
  y_p(0)&= [u_0,v_0]^\top,
 \end{align}
\end{subequations}
with the operator
\[{\A}_p:\begin{bmatrix}
u\\v
\end{bmatrix} \mapsto\begin{bmatrix}
v\\
-\frac\Lambda\rho u^\rom{4}
\end{bmatrix}.\]
The equations \eqref{5.10c} and \eqref{5.10d} are incorporated into the domain $D({\A}_p)$. For further details on the operator ${\A}_p$ in the space $\HH_p$ see the Appendix.
\medskip

\underline{Step 2 (proof of $u(t,L)=u'(t,L)=0$):}
We now investigate solutions of the projected problem \eqref{ivp_p} with the additional property that $u(t,L)$ and $u'(t,L)$ are constant in time. Since the semigroup $\e^{t{\A}_p}$ is unitary in $\HH_p$, we know in particular that $\|v(t)\|_{L^2}\le C= \frac{1}{\sqrt{\rho}} \|y_p(0)\|_{\HH_p}$ for all $t\ge 0$ (cf.~\eqref{App1}). Applying the norm to \eqref{410b} this yields\begin{equation}\label{dot_h_4}
\sup_{t \ge 0}{\Big\|\Big( \int_0^t{u(s) \d{} s}\Big)^{\rom{4}}\Big\|_{L^2(0,L)}} < \infty.
\end{equation} Next we apply the following Gagliardo-Nirenberg inequalities (cf.~\cite{nirenberg}), which guarantee the existence of a $C>0$ such that there holds for all $t\ge 0$:
\begin{equation}\label{gag_nir}
\begin{split}
\Big\|\int_0^t u(s) \d s\Big\|_{L^\infty(0,L)} & \le C \Big\|\Big( \int_0^t u(s) \d{} s \Big)^{\rom{4}}\Big\|_{L^2(0,L)}^{\frac 18}
\cdot\Big\|\int_0^t u(s) \d{} s\Big\|_{L^2(0,L)}^{\frac 78}, \\
\Big\|\int_0^t u'(s) \d s\Big\|_{L^\infty(0,L)}& \le C \Big\|\Big( \int_0^t u(s) \d{} s \Big)^{\rom{4}}\Big\|_{L^2(0,L)}^{\frac 38}
\cdot\Big\|\int_0^t u(s) \d{} s\Big\|_{L^2(0,L)}^{\frac 58}.
\end{split}
\end{equation}
The first factor on the right hand side in both inequalities is uniformly bounded (with respect to $t$) due to \eqref{dot_h_4}. For the second factor we observe that, according to Theorem~\ref{trm_1}, $t\mapsto\|u(t)\|_{L^2(0,L)}$ is uniformly bounded, and therefore $t\mapsto \|\int_0^t u(s)\d s\|_{L^2(0,L)}$ grows at most linearly. Hence, \eqref{gag_nir} implies that $t\mapsto\|\int_0^t u(s,L) \d s\|_{L^\infty(0,L)}$ grows at most like $t^{\frac 78}$ and $t\mapsto\|\int_0^t u'(s,L) \d s\|_{L^\infty(0,L)}$ at most like $t^{\frac 58}$ as $t\to\infty$. But this contradicts the fact that $u(t,L)$ and $u'(t,L)$ are constant, unless $u_0(L)= u_0'(L)=0$. This shows that $u(t,L)=u'(t,L)=0$ for all $t\ge 0$.

\medskip

\underline{Step 3 (Holmgren's Theorem):}
By iterated $t$-integration we shall now construct $C^4$-solutions of \eqref{ivp_p_1}, for which we can apply the Holmgren Uniqueness Theorem~\cite[Section 3.5]{john}. So we define $y_1(t)\equiv[u_1(t),v_1(t)]^\top:=\int_0^t y_p(s)\d s+{\A}_p^{-1}[u_0,v_0]^\top$. Due to Theorem~1.2.4 in \cite{pazy} and Lemma~\ref{invertdf} we have $y_1(t)\in D({\A}_p)$ for all $t\ge 0$. So $y_1$ is a classical solution of \eqref{ivp_p_1} to the initial condition $y_1(0)={\A}_p^{-1}[u_0,v_0]^\top$. Furthermore, because of $u(t,L)=u'(t,L)=0$, again $u_1(t,L),u_1'(t,L)$ are constant in time. Completely analogous to the previous step we can show again that $u_1(t,L)=u_1'(t,L)=0$.

Next we shall construct solutions of higher regularity. We iterate the previous step and define recursively  $y_n(t)\equiv[u_n(t),v_n(t)]^\top:=\int_0^t y_{n-1}(s)\d s+{\A}_p^{-n}[u_0,v_0]^\top$, which solves \eqref{ivp_p_1} classically with the initial condition $y_n(0)={\A}_p^{-n}[u_0,v_0]^\top$. Again we have  $u_n(t,L)=u_n'(t,L)=0$. Furthermore, by definition we have on the one hand ${\A}_py_n(t)=y_{n-1}(t)$. And on the other hand ${\A}_p[u_n,v_n]^\top=[v_n,-\Lambda/\rho \,u_n^\rom{4}]^\top$, so we can show inductively that $y_n\in C(\R^+; \tilde H_0^{2n+2}(0,L)\times \tilde H_0^{2n}(0,L))$. Now we consider the solution $u_n$ for $n\ge 2$. It satisfies the following partial differential equation with boundary conditions:
\begin{subequations}\label{clas}
\begin{align}
 (u_n)_{tt} & = -\frac\Lambda\rho u_n^{\rom{4}},\label{clasa}\\
[u_n(0,x),(u_n)_t(0,x)]^\top&={\A}_p^{-n}[u_0,v_0]^\top,\\
 u_n(t,0)&=u_n'(t,0)=0,\\
 u_n(t,L)&=\ldots=u_n^{\prime\prime\prime}(t,L)=0.\label{ctf}
\end{align}
\end{subequations}
By using \eqref{clasa}, $u_n\in  C(\R^+;\tilde H_0^{2n+2}(0,L))$, and the fact that $(u_n)_t=v_n\in C(\R^+;\tilde H_0^{2n}(0,L))$, we obtain the following properties for the mixed fourth order space-time derivatives of $u_n$:
\begin{align*}
 u_n^{\rom{4}} &\in C(\R^+;\tilde H_0^{2n-2}(0,L)),\\
 (u_n)_t^{\prime\prime\prime}&\in C(\R^+;\tilde H_0^{2n-3}(0,L)),\\
 ({u}_n)_{tt}^{\prime\prime}&=-\frac\Lambda\rho u_n^{\rom{6}} \in C(\R^+;\tilde H_0^{2n-4}(0,L)),\\
 (u_n)_{ttt}'&=- \frac\Lambda\rho v_n^{\rom{5}}\in C(\R^+;\tilde H_0^{2n-5}(0,L)),\\
(u_n)_{tttt}&=\frac{\Lambda^2}{\rho^2} u_n^{\rom{8}}\in C(\R^+;\tilde H_0^{2n-6}(0,L)).
\end{align*}
So for $n\ge4$, all mixed derivatives of $u_n$ of order four lie in $C(\R^+;\tilde H_0^2(0,L))\subset C(\R^+\times [0,L])$. Thus $u_n(t,x)$ is a $C^4$-solution of \eqref{clas}.

Now we can apply the Holmgren Uniqueness Theorem~\cite[Section 3.5]{john} on the strip $\R^+\times (0,L)$. Due to \eqref{ctf} all partial derivatives up to order $3$ of $u_4$ vanish on the line $\R^+\times \{L\}$. Therefore, Holmgren's Uniqueness Theorem implies that $u_4=0$ has to hold everywhere in this strip. (See also the proof of Lemma~3 in \cite{littman} for a similar result -- but without a detailed proof.) Therefore ${\A}_p^{-4}[u_0,v_0]^\top=0$ has to hold, and since ${\A}_p^{-1}$ is injective, this yields $[u_0,v_0]^\top=0$. Since $y_p(t)=\e^{t{\A}_p}[u_0,v_0]^\top$, we conclude that $u(t)=v(t)=0$ for all $t\ge 0$, and hence $\Omega=\{0\}$.

{For the final statement of the theorem, let $\omega(y_0) \ne \emptyset$. Then, by  Lemma~\ref{lem72} $\omega(y_0)$ is $S$-invariant, and by Lemma~\ref{lem73} $H$ is constant on $\omega(y_0)$. Hence, by the first statement of Theorem~\ref{om_lise_0}, $\omega(y_0) = \{0\}$.}
\end{proof}

As a consequence we obtain convergence to zero for trajectories with $\omega(y_0)\neq\emptyset$:

\begin{cor}\label{i_r}
 If $\omega(y_0)\neq\emptyset$ for some $y_0\in\HH$, then
 \[\lim_{t\to\infty}\|S(t)y_0\|_{\HH}=0.\]
\end{cor}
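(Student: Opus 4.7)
The plan is to combine Theorem~\ref{om_lise_0} with the Lyapunov structure of $H$ to reduce the desired norm convergence to the scalar statement $H(S(t)y_0)\to 0$, and then to upgrade the latter term by term using the uniform boundedness of the trajectory. Since $\omega(y_0)\neq\emptyset$ is $S$-invariant (Lemma~\ref{lem72}) and $H$ is constant on it with value $\mathfrak H(y_0)$ (Lemma~\ref{lem73}), Theorem~\ref{om_lise_0} forces $\omega(y_0)=\{0\}$; plugging this back into Lemma~\ref{lem73} gives $\mathfrak H(y_0)=H(0)=0$, and hence by \eqref{frak_h} we obtain $H(S(t)y_0)\to 0$ as $t\to\infty$.

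Next I will read off norm convergence term by term. Writing $S(t)y_0=[u(t),v(t),z_1(t),z_2(t),\xi(t),\psi(t)]^\top$, nonnegativity of every summand of $H$ forces each of them to tend to zero along the trajectory, which immediately delivers $\|u''(t)\|_{L^2}$, $\|v(t)\|_{L^2}$, $|\xi(t)|$, $|\psi(t)|\to 0$, together with $V_{k_j}(u'(t,L))$, $V_{k_j}(u(t,L))$, $V_j(z_j(t))\to 0$. By Theorem~\ref{trm_1} the trajectory stays bounded in $\HH$, so (via $\tilde H_0^2(0,L)\hookrightarrow C^1([0,L])$) the arguments $u(t,L)$, $u'(t,L)$, $z_1(t)$, $z_2(t)$ remain in a fixed bounded set. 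Since $V_{k_j}$ and $V_j$ are continuous and strictly positive away from the origin by (A2) and (A3), a routine subsequence-compactness argument then yields $u(t,L), u'(t,L)\to 0$ in $\R$ and $z_j(t)\to 0$ in $\R^{n_j}$, so that the remaining four contributions $K_1|u'(t,L)|^2$, $K_2|u(t,L)|^2$, $z_1^\top P_1 z_1$, $z_2^\top P_2 z_2$ to $\|S(t)y_0\|_\HH^2$ also vanish. Summing all eight terms in $\|\cdot\|_\HH^2$ from \eqref{ip_II} produces $\|S(t)y_0\|_\HH\to 0$.

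No serious obstacle is expected: the argument is a clean Lyapunov-style finish in the spirit of LaSalle's invariance principle. The only mildly delicate point is the passage from $V(s_n)\to 0$ (with $V$ continuous and $V(s)>0$ for $s\ne 0$) to $s_n\to 0$ for bounded $s_n\in\R^m$, which is handled by a standard contradiction: any accumulation point of $\{s_n\}$ would lie in a compact set and vanish under $V$, hence be zero, so all subsequential limits are $0$ and $s_n\to 0$.
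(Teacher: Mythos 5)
Your proof is correct and follows essentially the same route as the paper: Theorem~\ref{om_lise_0} gives $\omega(y_0)=\{0\}$, monotonicity of $t\mapsto H(S(t)y_0)$ upgrades the sequential information to $H(S(t)y_0)\to 0$, and the norm convergence is then read off from the nonnegative summands of $H$. Your term-by-term finish (positivity of $V_{k_j}$ and $V_j$ together with boundedness of the trajectory and a subsequence argument) in fact spells out the step the paper compresses into ``due to the continuity of $H$'', which on its own would only give the reverse implication.
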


\begin{proof}\sloppy
If $\omega(y_0)\neq\emptyset$ then there exists a sequence $\{t_n\}_{n\in\N}$ with $t_n\to\infty$ such that ${\lim_{n\to\infty}S(t_n)y_0=0}$. Due to the continuity of the Lyapunov function $H$ this implies that
\[\lim_{n\to\infty}H(S(t_n)y_0)=0.\]
But since $t\mapsto H(S(t)y_0)$ is non-increasing, this implies that even
\[\lim_{t\to\infty}H(S(t)y_0)=0.\]
Due to the continuity of $H$ this implies that $\|S(t)y_0\|_{\HH}\to 0$ as $t\to \infty$.
\end{proof}


\section{Asymptotic Stability -- Linear $k_j$}\label{S6}

In the case where the $k_j$ are linear we are able to show precompactness for all trajectories, even for the mild, non-classical solutions. This will yield that the $\omega$-limit set $\omega(y_0)$ is always non-empty, and hence the asymptotic stability of the nonlinear semigroup will follow.

\begin{lem}\label{pro21}
 Let $y_0\in \HH$, and $y(t)$ be the corresponding mild solution of \eqref{ivp}. For $j=1,2$ let $\kappa_j=0$. Then $\mathcal Ny(t)\in L^1(\R^+;\HH)$.
\end{lem}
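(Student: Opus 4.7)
The plan is to bound $\|\mathcal N y(t)\|_\HH$ pointwise by elementary quadratic expressions in $z_j(t)$, $\xi(t)$, $\psi(t)$, and then to show that each such expression is time-integrable by exploiting the dissipation of the Lyapunov functional $H$.

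First I would invoke Theorem~\ref{trm_1}: the mild solution $y(t)$ stays in a bounded ball of $\HH$, so there is $R>0$ with $|z_j(t)|, |\xi(t)|, |\psi(t)| \le R$ for all $t \ge 0$. Since $\kappa_j=0$ and the $u,v$-components of $\mathcal N y$ vanish, the definition of $\mathcal N$ and of the inner product \eqref{ip_II} yield
\[
\|\mathcal N y(t)\|_\HH \le C\bigl(|\alpha_1(z_1)| + |\beta_1(z_1)|\,|\xi| + |\gamma_1(z_1)| + |\delta_1(\xi/J)| + (\text{analogous terms with index } 2)\bigr).
\]
Combining the local estimates \eqref{a:c_2}, \eqref{b:c_2}, \eqref{c:c_2} and $\delta_j(s)=\mathcal O(s^2)$ with the uniform bound $R$, these $\mathcal O$-estimates become uniform quadratic bounds on the bounded trajectory, and I obtain
\[
\|\mathcal N y(t)\|_\HH \le C\bigl(|z_1|^2 + |z_2|^2 + |z_1|\,|\xi| + |z_2|\,|\psi| + \xi^2 + \psi^2\bigr).
\]

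Next I would prove that each term on the right lies in $L^1(\R^+)$. For a classical solution, Lemma~\ref{lem82} and its proof give the dissipation identity
\[
H(y_0)-H(y(t)) = \int_0^t \bigl[-a_1(z_1)\cdot\nabla V_1(z_1)-a_2(z_2)\cdot\nabla V_2(z_2)+d_1(v'(L))v'(L)+d_2(v(L))v(L)\bigr]\d s,
\]
whose left-hand side is bounded by $H(y_0)$; the four integrands are non-negative by \eqref{a:c_3} and \eqref{d:c}. On the bounded trajectory the quotients $-\nabla V_j(z_j)\cdot a_j(z_j)/|z_j|^2$ (extending continuously to $z_j=0$ via \eqref{a:c_4}) and $d_j(s)/s$ (extending continuously to $s=0$ with value $d_j'(0)>0$) are strictly positive continuous functions on the compact set $\{|z_j|\le R\}$ resp.\ $[-R,R]$, hence bounded below by a positive constant. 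This gives the coercive estimates $-\nabla V_j(z_j)\cdot a_j(z_j)\ge c|z_j|^2$ and $d_j(s)s\ge c s^2$. Since $v'(L)=\xi/J$ and $v(L)=\psi/M$ on $D(\LL)$, the integrated identity yields $|z_j|^2,\xi^2,\psi^2\in L^1(\R^+)$ for classical solutions.

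For a general mild solution I would approximate $y_0$ by $y_{0,n}\in D(\LL)$ as in Proposition~\ref{prp_2}. The previous step applied to $y_n(t)$ gives $\int_0^T(|z_{j,n}|^2+\xi_n^2+\psi_n^2)\d s \le H(y_{0,n})/c$ uniformly in $T>0$ and $n$. Pointwise convergence of the components (from Proposition~\ref{prp_2}) together with Fatou's lemma transfers the same $L^1(\R^+)$-bounds to $|z_j|^2,\xi^2,\psi^2$ along the mild solution. A Cauchy--Schwarz estimate then gives $\int_0^\infty |z_j(t)|\,|\xi(t)|\d t \le \bigl(\int |z_j|^2\bigr)^{1/2}\bigl(\int \xi^2\bigr)^{1/2}<\infty$, analogously for $|z_2|\,|\psi|$, and combining with the pointwise bound from the first step yields $\mathcal N y\in L^1(\R^+;\HH)$. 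The main obstacle I anticipate is the passage from classical to mild solutions, because $v'(t,L)$ and $v(t,L)$ need not be pointwise defined in the non-classical case; the Fatou-based approximation argument circumvents this by formulating the coercive bound directly in terms of $\xi$ and $\psi$, which always exist as components of the state.
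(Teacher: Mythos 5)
Your proposal is correct and follows essentially the same route as the paper: integrate the dissipation identity for $H$ along classical solutions, pass to mild solutions by the approximation of Proposition~\ref{prp_2}, deduce $z_j,\xi,\psi\in L^2(\R^+)$ from the coercivity encoded in \eqref{a:c_4} and \eqref{d:c}, and conclude via the quadratic pointwise bound on $\mathcal N$ (which the paper leaves implicit). The only cosmetic difference is that you transfer the $L^1$-bounds to the mild solution via Fatou's lemma, whereas the paper passes the integrated identity \eqref{int_hdot} itself to the limit; both work, and your remark about the quotient ``extending continuously'' at $0$ should just be read as a positive lower bound near $0$, which is all \eqref{a:c_4} provides and all that is needed.
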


\begin{proof}
 First, let us assume that $y_0\in D(\LL)$, so $y(t)$ is a classical solution. We know from Theorem~\ref{trm_1} that $H(y(t))$ is non-increasing. By integrating \eqref{hdot} with respect to time we obtain
\begin{equation}\label{int_hdot}
\begin{split}
H(y(T))-H(y_0)&=\int_0^T\!\Big[ -d_1\!\left(\frac{\xi}{J}\right)\frac{\xi}{J}-d_2\!\left(\frac{\psi}{M}\right)\frac{\psi}{M}\\
&\qquad+a_1(z_1)\cdot\nabla V_1(z_1)+a_2(z_2)\cdot\nabla V_2(z_2)\Big]\d t=:I_T(y_0),
\end{split}
\end{equation}
where all terms on the right hand side include elements of the vector $y(t)$, thus depend on $t$. If we let $T\to\infty$, we know that $H(y(T))\to\mathfrak H(y_0)$, i.e.~the limit exists and the integral $I_\infty(y_0)$ is finite.

Now we consider $y_0\in \HH$, and $y(t)$ is the corresponding mild solution of \eqref{ivp}. Let $\{y_{0,n}\}_{n\in\N}\subset D(\LL)$ be a sequence with $y_{0,n}\to y_0$. According to Proposition~\ref{prp_2} and Remark~\ref{rem48} the corresponding classical solutions $y_n(t)$ converge to $y(t)$ in $C([0,T];\HH)$ for all $T>0$. Therefore $I_T(y_{0,n})\to I_T(y_0)$, cf.~\eqref{int_hdot}. Due to continuity of $H$, also $H(y_n(T))-H(y_{0,n})\to H(y(T))-H(y_0)$ as $n\to\infty$. Thus, \eqref{int_hdot} also holds for mild solutions for any $T>0$. Since $H(y(T))\to \mathfrak H(y_0)\in[0,H(y_0)]$ as $T\to\infty$, the integral $I_\infty(y_0)$ is finite.

Now we know that for any (mild) solution $y(t)$ the integral $I_\infty(y_0)$ from \eqref{int_hdot} is finite. Since all the terms in the integrand of \eqref{int_hdot} are non-positive, we conclude together with \eqref{a:c_4} and \eqref{d:c}  that
\begin{equation}\label{el2}
  z_j(t),\psi(t),\xi(t)\in L^2(\R^+).
\end{equation}
Under the assumptions we made in Section \ref{secc5} for the functions occurring in the nonlinear operator $\mathcal N$, the properties \eqref{el2} immediately imply $\mathcal Ny(t)\in L^1(\R^+;\HH)$.
\end{proof}

\begin{rem}
To obtain $\mathcal Ny(t)\in L^1(\R^+;\HH)$ in the above proof, we used in \eqref{int_hdot} that the nonlinear damping functions $d_j$ include a non-vanishing linear part (i.e. $D_j > 0$). The same assumption will also be needed in Step 3 of the proof of Lemma~\ref{precomp_2} below. However, in the nonlinear spring-damper system of \cite{MSA}, a locally quadratic growth of the damper law was sufficient. From a practical point of view, this is not restrictive at all.
\end{rem}

We note that \eqref{int_hdot} does not give any control on $u(t,L)$ and $u'(t,L)$ (in the sense of \eqref{el2}). Hence, the linearity assumption $\kappa_j=0$ was crucial for the above proof.

\begin{trm}\label{fin_trm}
{Let $\kappa_j=0$ for $j=1,2$.} For any $y_0\in \HH$ there holds $\lim_{t\to\infty} S(t)y_0=0$, i.e.~the semigroup $S$ is asymptotically stable.
\end{trm}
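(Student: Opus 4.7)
The plan is to combine the structural results already established. Theorem~\ref{om_lise_0} shows that for every $y_0 \in \HH$ the $\omega$-limit set $\omega(y_0)$ is either empty or equal to $\{0\}$, and Corollary~\ref{i_r} upgrades the nonempty case to $S(t)y_0 \to 0$ in $\HH$. Hence the statement of the theorem reduces to verifying that $\omega(y_0) \neq \emptyset$ for every $y_0 \in \HH$, which in turn follows once one knows that the trajectory $\gamma(y_0)$ is precompact in $\HH$.

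To obtain this precompactness I would invoke the Dafermos--Slemrod criterion from \cite{daf_sle}, advertised in the introduction as applicable to evolutions $y_t = \A y + \mathcal N y$ under the hypotheses that $\A$ is maximal dissipative (with compact resolvent) and that $\mathcal N y(\cdot)$ is integrable along the solution. All three ingredients are now in place: $\A$ is maximal dissipative by Remark~\ref{hypoer}; its inverse $\A^{-1}$ is compact (hence $\A$ has compact resolvent) by Theorem~\ref{teo73}; and, under the linearity assumption $\kappa_j=0$, Lemma~\ref{pro21} delivers $\mathcal N y(\cdot) \in L^1(\R^+;\HH)$ not only for classical solutions but also for mild ones. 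Combined with the uniform-in-time bound on $\|y(t)\|_\HH$ from Theorem~\ref{trm_1}, this yields precompactness of $\gamma(y_0)$, whence $\omega(y_0) \neq \emptyset$, and the two results quoted above then close the argument: $\omega(y_0)=\{0\}$ by Theorem~\ref{om_lise_0} and $\|S(t)y_0\|_\HH\to 0$ by Corollary~\ref{i_r}.

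The essential obstacle, and the reason the theorem is restricted to linear $k_j$, lies precisely in the $L^1$-integrability of $\mathcal N y(\cdot)$. The dissipation identity \eqref{hdot} controls $z_j$, $\xi$ and $\psi$ in $L^2(\R^+)$ via \eqref{a:c_4} and the condition $d_j'(0)>0$, but provides no pointwise-in-time control on $u(t,L)$ or $u'(t,L)$; a nonvanishing $\kappa_j$ would therefore leave contributions in $\mathcal N y(\cdot)$ not accounted for by $\dot H$ and would break the $L^1$-bound. This is why the genuinely nonlinear case $\kappa_j\neq 0$ must be treated by a different compactness strategy in Section~\ref{S7}, while in the present section the linearity of $k_j$ makes the elegant semigroup-theoretic route above available.
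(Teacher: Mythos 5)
Your proposal is correct and follows essentially the same route as the paper: it invokes the Dafermos--Slemrod result with $f(t):=\mathcal N y(t)$, using the maximal dissipativity of ${\A}$ (Remark~\ref{hypoer}), compactness of its resolvent (from Theorem~\ref{teo73}), and the $L^1$-integrability of $\mathcal N y(\cdot)$ from Lemma~\ref{pro21}, to get precompactness of the trajectory and hence a non-empty $\omega$-limit set, after which Theorem~\ref{om_lise_0} and Corollary~\ref{i_r} finish the argument exactly as in the paper. Your closing remark on why $\kappa_j=0$ is indispensable matches the paper's own discussion following Lemma~\ref{pro21}.
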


\begin{proof}
{Our aim is to apply a version of Theorem~4 in \cite{daf_sle}. It states that if $\mathcal L$ is a linear, maximal dissipative operator with $(\lambda-\mathcal L)^{-1}$ is compact for some $\lambda>0$, and $f\in L^1(\R^+;\HH)$, then every mild solution of the Cauchy problem $\dot y(t) = \mathcal Ly(t) +f(t)$ has a precompact trajectory.}

According to Remark~\ref{hypoer} the linear part ${\A}$ of $\LL$ is a maximal dissipative operator on $\HH$. As seen in the proof of Theorem~\ref{teo73}, ${\A}^{-1}$ exists and is compact. Since ${\A}$ generates a $C_0$-semigroup of contractions,  $(\lambda-{\A})^{-1}$ exists and is compact for all $\lambda>0$. {Finally, according to Lemma~\ref{pro21} we know that $\mathcal N y(t)\in L^1(\R^+;\HH)$ for $y(t):=S(t) y_0$.} Due to these facts, we can apply Theorem~4 in \cite{daf_sle} with $f(t):=\mathcal N y(t)$. This shows that the $\omega$-limit set $\omega(y_0)$ is non-empty. Thus, due to Corollary~\ref{i_r} and Theorem~\ref{om_lise_0}, we conclude $\omega(y_0)=\{0\}$ and that the entire solution $y(t)$ converges to zero.
\end{proof}



\section{Asymptotic Stability -- Nonlinear $k_j$}\label{S7}

According to Corollary~\ref{i_r}, any trajectory with a non-empty $\omega$-limit set already is asymptotically stable.
Thus, in order to complete the discussion we show in this section that (at least) any classical trajectory possesses a non-empty $\omega$-limit.
 We do this by proving that every classical trajectory is precompact. To this end we follow a strategy introduced in \cite{MSA}.
We begin with the following preparatory result (which would be obvious for linear semigroups):
\begin{lem} \label{regularity_2}
Let $y(t)$ be a (mild) solution of \eqref{ivp} and let $y_0 \in D({\LL}^2):=\{y\in D({\LL}): \LL y\in D(\LL)\}$. Then $y\in C^2([0,\infty); \mathcal{H})$ and $y_t(t) \in D(\LL)$ for all $t>0$.
\end{lem}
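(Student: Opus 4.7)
The plan is to lift the problem to the augmented space $\HH\times\HH$, solve a semilinear Cauchy problem there for the pair $(y,y_t)$, and then identify the second coordinate with $y_t$ via difference quotients. The starting point is that Proposition~\ref{pro_61} already gives a global classical solution $y\in C^1([0,\infty);\HH)$ with $y(t)\in D(\LL)$ and $y_t=\A y+\mathcal N y$, while the hypothesis $y_0\in D(\LL^2)$ means $y_t(0)=\LL y_0\in D(\LL)=D(\A)$. Using that $\alpha_j,\beta_j,\gamma_j,\delta_j,\kappa_j$ are $C^2$ and that the boundary evaluations $u\mapsto u(L)$ and $u\mapsto u'(L)$ are continuous on $\tilde H_0^2(0,L)\inj C^1([0,L])$, the map $\mathcal N:\HH\to\HH$ is Fr\'echet differentiable with a locally Lipschitz derivative $\mathcal N'$.

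Exploiting this regularity, I introduce on $\tilde\HH:=\HH\times\HH$ the operator $\tilde\A:=\A\oplus\A$ on $D(\A)\times D(\A)$, which generates a $C_0$-semigroup of contractions, together with the locally Lipschitz nonlinearity $\tilde{\mathcal N}(y,w):=(\mathcal N(y),\mathcal N'(y)w)$. Since the initial datum $(y_0,\LL y_0)$ lies in $D(\tilde\A)$, Theorems~6.1.4 and~6.1.5 of \cite{pazy} yield a classical solution $(y^*,w^*)\in C^1([0,\tilde T_{\max});\tilde\HH)$ of $\tilde Y_t=\tilde\A\tilde Y+\tilde{\mathcal N}(\tilde Y)$ with $(y^*(t),w^*(t))\in D(\tilde\A)$ throughout. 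Uniqueness of mild solutions to \eqref{ivp} then forces $y^*\equiv y$ on the common interval of existence.

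To see that $w^*=y_t$, I pass to difference quotients $v_h(t):=h^{-1}[y(t+h)-y(t)]$ with $h>0$. Writing
\[\mathcal N(y(s+h))-\mathcal N(y(s))=\Bigl(\int_0^1\mathcal N'\bigl(y(s)+\theta[y(s+h)-y(s)]\bigr)\d\theta\Bigr)(y(s+h)-y(s))\]
and subtracting the Duhamel identities \eqref{milde} at times $t+h$ and $t$ produces an integral equation for $v_h$ whose kernel converges uniformly on compact time intervals to the coefficient of the linear equation defining $w^*$. A Gr\"onwall estimate yields uniform bounds on $v_h$ and convergence $v_h\to w^*$ in $C([0,T];\HH)$; since $v_h(t)\to y_t(t)$ pointwise by the $C^1$-regularity of $y$, we conclude $w^*=y_t$. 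In particular $y_t(t)\in D(\A)=D(\LL)$ and $y\in C^2$. Finally $\tilde T_{\max}=\infty$: $y$ is already global, and the equation for $w^*$ is linear with coefficient $\mathcal N'(y(\cdot))$ bounded on every compact interval, so Gr\"onwall excludes blow-up of $\|w^*(t)\|_\HH$.

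The main obstacle is the difference-quotient step: one must verify uniform-in-$h$ bounds on $v_h$ on compact intervals before passing to the limit, and confirm that the linearized integral equation for $w^*$ is uniquely solvable. Once the $C^2$-regularity of $\mathcal N:\HH\to\HH$ is at hand, the remaining arguments reduce to standard semilinear semigroup theory from \cite{pazy}.
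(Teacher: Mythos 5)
Your argument is correct, and it rests on the same underlying idea as the paper's proof — realize $y_t$ as the solution of the variational (linearized) equation $Z_t=\A Z+\mathcal N'(y(t))Z$ with datum $\LL y_0\in D(\A)$ and invoke Theorem~6.1.5 of \cite{pazy} to get classical regularity — but the execution differs at two points. The paper keeps the known classical solution $y(t)$ frozen inside the nonlinearity, so the problem for $Z$ is a \emph{linear non-autonomous} one (global mild solution by Theorem~6.1.2 of \cite{pazy}), and it identifies $y_t$ with that mild solution by differentiating Duhamel's formula directly, using Corollary~4.2.5 of \cite{pazy} for the convolution term. You instead work with the \emph{autonomous} semilinear system for the pair $(y,y_t)$ on $\HH\times\HH$ with nonlinearity $(\mathcal N(y),\mathcal N'(y)w)$, and identify the second component with $y_t$ through difference quotients, the integral-form mean value theorem and a Gr\"onwall estimate; this avoids the appeal to Corollary~4.2.5 but requires the extra difference-quotient computation, a separate Gr\"onwall argument to rule out blow-up of the $w$-component (the paper gets globality for free from the linear non-autonomous theory), and genuine $C^2$-regularity of $\mathcal N:\HH\to\HH$ so that the augmented nonlinearity is $C^1$ as Theorem~6.1.5 demands — which does hold under (A1)--(A6), since the nontrivial components of $\mathcal N$ are $C^2$ functions of finitely many bounded linear functionals of $y$ (the paper's route needs essentially the same second-derivative information to make $t\mapsto G_j(t,Z)$ of class $C^1$). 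Both identification steps are sound; yours is slightly longer but self-contained, the paper's is shorter at the cost of citing the differentiation lemma for Duhamel integrals.
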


\begin{proof}
If we already knew that $y\in C^2([0,\infty);\HH)$, it would follow that $\tilde y:= y_t$ satisfies
\begin{equation}\label{ivp_2}
\tilde y_t={\A} \tilde y+\begin{bmatrix}
0\\0\\ \alpha_1'(z_1) \tilde{z_1} + \frac1J [\beta_1'(z_1)\tilde{z}_1 \xi + \beta_1(z_1) \tilde{\xi} ]\\ \alpha_2'(z_2) \tilde{z}_2 + \frac1M [\beta_2'(z_2)\tilde{z}_2 \psi + \beta_2(z_2) \tilde{\psi} ]\\[0.5mm]
- \gamma_1'(z_1)\tilde{z}_1 - \frac1J \delta_1'\big(\frac{\xi}{J}\big) \tilde{\xi}- \kappa_1'(u'(L)) \tilde{u}'(L) \\[0.5mm] - \gamma_2'(z_2)\tilde{z}_2 - \frac1M \delta_2'\big( \frac{\psi}{M}\big) \tilde{\psi} - \kappa_2'(u(L)) \tilde{u}(L)
\end{bmatrix}.
\end{equation}
However, at this point we only know that $y(t)\in C^1([0,\infty);\HH)$, see Proposition~\ref{pro_61}. Motivated by \eqref{ivp_2} we  define the following functions for this fixed $y(t)=[u, v,z_1,z_2,\xi,\psi]^\top(t)$:
\begin{align*}
G_1(t,Z)& := \alpha_1'(z_1) \zeta_1 + \frac1J [\beta_1'(z_1)\zeta_1 \xi + \beta_1(z_1) \Xi ],\\
G_2(t,Z)& := \alpha_2'(z_2) \zeta_2 + \frac1M [\beta_2'(z_2)\zeta_2 \psi + \beta_2(z_2) \Psi ],\\
G_3(t,Z)& := - \gamma_1'(z_1)\zeta_1 - \frac1J \delta_1'\Big(\frac{\xi}{J}\Big) \Xi - \kappa_1'(u'(L)) U'(L),\\
G_4(t,Z)& := - \gamma_2'(z_2)\zeta_2 - \frac1M \delta_2'\Big(\frac{\psi}{M}\Big) \Psi - \kappa_2'(u(L)) U(L),\\
\end{align*}
where $Z:=[U,V,\zeta_1,\zeta_2, \Xi, \Psi]^\top\in\HH$. Since $y(t)$ is a classical solution, it follows {from the regularity assumptions of the coefficients that $t\mapsto G_j(t,Z)$ lies in $C^1$} for all $j=1,\dots,4$. As a consequence the operator $\tilde{\mathcal{N}} :[0,T]\times \HH \to \HH$ defined by
\[\tilde{\mathcal{N}} (t,Z):=[0,0, G_1(t,Z), G_2(t,Z), G_3(t,Z), G_4(t,Z)]^{\top},\] is {Lipschitz continuous for any fixed $T>0$, and linear in $Z\in\HH$}. Now the linear, non-autonomous initial value problem
\begin{subequations}\label{ivp_z}
\begin{align}
Z_t &= {\A}Z+\tilde{\mathcal{N}}(t,Z),\\
Z(0)&=Z_0 \in \HH,
\end{align}
\end{subequations}
is considered. According to Theorem~6.1.2 in \cite{pazy} there exists a unique global mild solution $Z(t)$ of \eqref{ivp_z} for every $Z_0\in \HH$. If $Z_0\in D({\A})$ this solution is classical, see Theorem~6.1.5 in \cite{pazy}.

Our next aim is to show that for the classical solution $y(t)$ fixed in the beginning, the (continuous) function $y_t(t)$ is indeed a mild solution of \eqref{ivp_z} for $Z_0=\LL y_0$: Since $y(t)$ satisfies the Duhamel formula \eqref{milde} and is differentiable, we obtain after differentiating with respect to $t$
\begin{equation}\label{diffmild}
y_t(t)= \e^{t{\A}}{\A} y_0+\frac{\d}{\d t}\int_0^t{\e^{(t-s){\A}}\mathcal{N} y(s)\d s}.
\end{equation}
According to the proof of Corollary~4.2.5 in \cite{pazy} the following statement holds true
\begin{equation} \label{aux_duh}
\frac{\d}{\d t}\!\!\int_0^t\!\!{\e^{(t-s){\A}}\mathcal{N} y(s)\d s} = \e^{t{\A}}\mathcal{N} y_0 + \!\!\int_0^{t}\!\! \e^{(t-s){\A}}\frac{\d}{\d s}\mathcal{N} y(s)\d s.
\end{equation}
Inserting \eqref{aux_duh} in \eqref{diffmild} yields that $y_t(t)$ fulfills the Duhamel formula for \eqref{ivp_z}. As a consequence $y_t(t)$ is the unique mild solution of \eqref{ivp_z} to the initial condition $Z_0=\LL y_0$. Moreover, we know that this mild solution $Z(t)=y_t(t)$ is a classical solution of $\eqref{ivp_z}$ if $\LL y_0\in D(\LL)$, i.e.~$y_0\in D(\LL^{2})$. Hence $y_t\in C^1(\R^{+} ; \HH)$ and $y\in C^2(\R^{+} ; \HH)$.
\end{proof}

\begin{lem} \label{precomp_2}
The trajectory $\gamma(y_0)$ is precompact in $\HH$ for $y_0 \in D({\LL}^2)$. Moreover, there exists a constant $C>0$ such that
\begin{equation} \label{unif_bound}
\| y_t(t)\|_{\HH} \le C, \qquad \forall t\ge0,
\end{equation}
where $C$ depends continuously on $\|y_0\|_{\HH}$ and $\|y_t(0)\|_{\HH}$.
\end{lem}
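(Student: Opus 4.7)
My plan is to first establish the uniform bound \eqref{unif_bound}; the precompactness of $\gamma(y_0)$ in $\HH$ then follows essentially for free. Indeed, once $\|y_t(t)\|_\HH\le C$ uniformly, the identity $y_t=\LL y$ combined with the explicit form of $\LL$ and the domain definition \eqref{da3} yields uniform bounds $\|u(t)\|_{H^4(0,L)}+\|v(t)\|_{H^2(0,L)}\le C'$. The compact Sobolev embeddings $\tilde H_0^4(0,L)\hookrightarrow\hookrightarrow\tilde H_0^2(0,L)$ and $H^2(0,L)\hookrightarrow\hookrightarrow L^2(0,L)$, together with the automatic precompactness of the bounded finite-dimensional components $z_j(t),\xi(t),\psi(t)$, then give the precompactness of $\gamma(y_0)$ in $\HH$.

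For the uniform bound, I set $E(t):=\tfrac12\|y_t(t)\|_\HH^2$; by Lemma~\ref{regularity_2}, $y_t$ classically solves the linear non-autonomous problem \eqref{ivp_z}, so $\dot E=\langle\A y_t,y_t\rangle_\HH+\langle\mathcal N'(y)y_t,y_t\rangle_\HH$. Repeating the dissipativity calculation from the proof of Theorem~\ref{teo73} for $y_t$ in place of $y$ --- the beam terms integrate by parts cleanly, and \eqref{c:c_4} together with the boundary relations $\xi_t=Jv_t'(L),\,\psi_t=Mv_t(L)$ makes the ODE--boundary and $K_j$-cross couplings cancel --- yields
\[\langle\A y_t,y_t\rangle_\HH\le -c\bigl(|z_{1,t}|^2+|z_{2,t}|^2+|\xi_t|^2+|\psi_t|^2\bigr)\]
for some $c>0$; here the strict negative definiteness of $P_jA_j$ follows from \eqref{a:c_5} combined with \eqref{a:c_4} via Taylor expansion of $\nabla V_j\cdot a_j$, and $D_j>0$ from \eqref{d:c_2} is used. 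The nonlinear part is controlled using: (i) the $\mathcal O(|\cdot|)$-behavior at zero of $\alpha_j',\beta_j,\gamma_j',\delta_j',\kappa_j'$ (consequences of \eqref{a:c_2}, \eqref{b:c_2}, \eqref{c:c_2}, \eqref{d:c_2}, \eqref{k:c_1}); (ii) the a priori bound $\|y(t)\|_\HH\le C_0(\|y_0\|_\HH)$ from Theorem~\ref{trm_1}; and (iii) the $L^1(\R^+)$-integrability of $|z_1|^2,|z_2|^2,|\xi|^2,|\psi|^2$, obtained by integrating \eqref{hdot} in time (note that $k_j$ and $\kappa_j$ do not enter $\dot H$, so this integrability is insensitive to the nonlinearity of $k_j$). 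Repeated Young inequalities with an $\epsilon$-splitting --- $\epsilon$-fractions absorbed by the dissipation, $C_\epsilon$-factors becoming $L^1$-weights --- produce an estimate $\dot E\le g(t)\,E+h(t)$ with $g,h\in L^1(\R^+)$ whose norms depend continuously on $\|y_0\|_\HH$, and a Grönwall argument then gives $E(t)\le[E(0)+\|h\|_{L^1}]\exp(\|g\|_{L^1})$, i.e.\ the asserted \eqref{unif_bound} with $C$ depending continuously on $\|y_0\|_\HH$ and $\|y_t(0)\|_\HH=\|\LL y_0\|_\HH$.

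I expect the principal obstacle to be the careful bookkeeping needed to achieve $g,h\in L^1(\R^+)$, and in particular the handling of the cross-term $-J^{-2}\kappa_1'(u'(L))\,\xi\,\xi_t$ (arising in $J^{-1}G_3\,\xi_t$ once $h_u'(L)=v'(L)=\xi/J$ is substituted), together with its $\kappa_2'$-counterpart. Because neither $u(L)$ nor $u'(L)$ is known to decay, $\kappa_j'(\cdot)$ is only bounded and cannot be made small with $\|y\|$; this contribution must be split via Young into an $\epsilon|\xi_t|^2$-piece absorbed by the dissipation and a $C_\epsilon|\xi|^2$-piece integrable in time by (iii), contributing to $h(t)$. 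The same maneuver handles the quadratically-small terms such as $\alpha_1'(z_1)z_{1,t}^{\top}P_1z_{1,t}$, where $|z_1|\,|z_{1,t}|^2\le\epsilon|z_{1,t}|^2+C_\epsilon|z_1|^2|z_{1,t}|^2$ turns $|z_1|^2\in L^1(\R^+)$ into a factor of $g(t)$ (using $|z_{1,t}|^2\le C E$). The strict positivity $D_j>0$ and the quadratic decay estimates on $\nabla V_j\cdot a_j$ are essential for making enough dissipation available to close this bookkeeping.
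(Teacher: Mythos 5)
Your proposal is correct in substance, but the core estimate is carried out by a genuinely different mechanism than in the paper. The reduction is the same: uniform boundedness of $\|y_t(t)\|_\HH$ (equivalently of $\LL y(t)$) plus the compact embeddings gives precompactness, Lemma~\ref{regularity_2} justifies differentiating the system, and both arguments feed on the $L^2(\R^+)$-integrability of $z_j,\xi,\psi$ coming from integrating \eqref{hdot} (which indeed requires $D_j>0$), together with the uniform bound of Theorem~\ref{trm_1}. The paper, however, does not run a Gr\"onwall argument: it first observes that all components of $H(y_t)$ except the beam/tip-mass part $\tilde H(y_t)$ are automatically bounded, then computes $\frac{\d}{\d t}\tilde H(y_t)$, simply drops the terms $-d_j'(\cdot)(u_{tt}(L))^2$, $-d_j'(\cdot)(u'_{tt}(L))^2$ using $d_j'\ge 0$, and controls the dangerous products of $u_{tt}(L),u'_{tt}(L)$ with $k_j'(\cdot)u_t(L)$ and $(z_j)_t^\top\nabla c_j(z_j)$ by (repeated) integration by parts \emph{in time}, so that only boundary terms bounded in $L^\infty$ and integrands in $L^1(\R^+)$ (via $u_t(L),u_t'(L),(z_j)_t\in L^2(\R^+)\cap L^\infty$) remain. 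In particular the paper never absorbs anything into the dissipation and never needs pointwise coercivity of the quadratic form $z\mapsto z^\top P_jA_jz$; only $(z_j)_t\in L^2(\R^+)$ is used. Your route — Gr\"onwall for $E=\tfrac12\|y_t\|_\HH^2$ with Young-splitting, $\epsilon$-pieces absorbed by the dissipation of the linearized generator and $C_\epsilon$-pieces turned into $L^1$-weights — closes as well, but it additionally requires the strict negative definiteness of the symmetric part of $P_jA_j$ to absorb the $\epsilon|z_{j,t}|^2$-pieces (a constant-in-time leftover $\epsilon|z_{j,t}|^2\le \epsilon CE$ would not be an $L^1$-weight); your derivation of this from \eqref{a:c_5} and \eqref{a:c_4} is fine within the paper's framework, but it is an extra use of (A4)--(A5) that the paper's integration-by-parts trick avoids. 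Conversely, your bookkeeping correctly recognizes that bounded coefficients alone would not suffice for terms like $\beta_1(z_1)\xi_t^\top P_1 z_{1,t}$ or $\delta_1'(\xi/J)|\xi_t|^2$ (the constants could exceed the dissipation), and that the vanishing at the origin of $\alpha_j',\beta_j,\gamma_j',\delta_j'$ together with the $L^1$-weight trick, or the monotonicity $d_j'\ge0$, handles them; with that, and with the $\kappa_j'$-cross-terms split as you describe, your Gr\"onwall data $g,h\in L^1(\R^+)$ indeed depend only on quantities controlled by $\|y_0\|_\HH$, so the stated continuous dependence of $C$ on $\|y_0\|_\HH$ and $\|y_t(0)\|_\HH=\|\LL y_0\|_\HH$ follows as in the paper.
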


\begin{proof}
In order to prove precompactness of the trajectory, it suffices to show that
\[\sup_{t>0}\|\LL y(t)\|_\HH<\infty,\] due to the compact embeddings $H^4(0,L)\inj\inj H^2(0,L)\inj\inj L^2(0,L)$.
However, this is equivalent to showing that $y_t$ is uniformly bounded in $\HH$, since $y_t = \LL y$. Again, this is equivalent to
\begin{align*}
H(y_t) &=  \frac{\rho}2 \int_0^L u_{tt}^2 \d x + \frac\Lambda2 \int_0^L\big( u^{\prime\prime}_t\big)^2  \d x +  \frac J2  \big(u'_{tt}(L)\big) ^2+ \frac{M}{2}  \big(u_{tt}(L)\big)^2
\\
&\quad  + \int_0^{u_t'(L)}k_1(s)\d s+ \int_0^{u_t(L)}k_2(s)\d s +V_1((z_1)_t) + V_2((z_2)_t)
\end{align*}
being uniformly bounded. Since $y(t)$ is a classical solution, we have the following equalities
\[u_t(L)=\frac \psi M,\quad u_t'(L)=\frac\xi J.\]
According to Theorem~\ref{trm_1} those terms are always uniformly bounded.
Moreover, due to regularity of the functions $a_j, b_j$ and Theorem~\ref{trm_1} we see from \eqref{EBB_4} and \eqref{EBB_5} that $(z_j)_t\in L^\infty(\R^+)$ for $j=1,2$.
Therefore, the boundedness of $H(y_t)$ is equivalent to the boundedness of the functional
\begin{align*}
\tilde H(y_t)& := \frac{\rho}2 \int_0^L u_{tt}^2 \d x + \frac\Lambda2 \int_0^L\big( u^{\prime\prime}_t\big)^2  \d x 
+  \frac J2  \big(u'_{tt}(L) \big)^2
+ \frac{M}{2}   \big(u_{tt}(L)\big)^2.
\end{align*}
Hence, our aim is to derive a system of equations satisfied by $y_t(t)$, and then to show that $\tilde H(y_t)$ is uniformly bounded.

\medskip

\underline{Step 1 (Time derivative of the system)}: According to Lemma~\ref{regularity_2},  $y(t) \in C^2([0, \infty); \HH)$. Differentiating \eqref{beam_PDE} - \eqref{beam_BC2} with respect to time hence shows that $y_t$
is the classical solution of the following system
\begin{subequations}  \label{system_time_der}
\begin{align}
   \rho  u_{ttt}  + \Lambda u^{\rom{4}}_t & =  0, \\
 u_t (t,0) =  u'_t (t,0) & =  0, \label{derc}\\
 \Lambda u^{\prime\prime}_t (t, L)+ J   u'_{ttt} (t, L)  + (\tau_{e})_t(t)&  =  0, \label{derd}\\
- \Lambda u^{\prime\prime\prime}_t (t,L)+ M u_{ttt} (t, L) + (f_{e})_t(t) &  =  0,\label{dere}
\end{align}
\end{subequations}
where
\begin{align}
\begin{split}\label{theta_def}
\tau_e & :=  c_1(z_1) +d_1( u_t'(L)) + k_1(u'(L)),\\
f_e & :=  c_2(z_2) + d_2(u_t(L)) + k_2(u(L)).
\end{split}
\end{align}
Therefore, from \eqref{theta_def} it follows
\begin{align}
\begin{split}\label{theta_der}
(\tau_e)_t &\! =\! \nabla c_1(z_1)\!\cdot\!(z_1)_t \!+\! d_1'(u'_{t}(L)) u'_{tt}(L) \!+\! k_1'(u'(L)) u'_t(L),\\
(f_e)_t & \!= \!\nabla c_2(z_2)\!\cdot\! (z_2)_t\! +\! d_2'(u_{t}(L)) u_{tt}(L) \!+\! k_2'(u(L)) u_t(L),
\end{split}
\end{align}
and from \eqref{EBB_4} and \eqref{EBB_5}, we obtain
\begin{subequations}  \label{controller_time_der}
\begin{align}
 (z_1)_{tt} & =  [J_{a_1}(z_1) + u'_t(L) J_{b_1}(z_1) ](z_1)_t + b_1(z_1) u'_{tt}(L), \label{z1tt}\\
 (z_2)_{tt} & =  [J_{a_2}(z_2) + u_t(L) J_{b_2}(z_2) ](z_2)_t + b_2(z_2) u_{tt}(L), \label{z2tt}
\end{align}
\end{subequations}
where $J_{a_j}$, $J_{b_j}$ denote the Jacobian matrices of the functions $a_j$ and $b_j$, respectively.
Note that from Lemma~\ref{lem82} it follows that $z_j(.), u_t(.\, , L)=\frac\psi M, u'_t(.\, , L)=\frac\xi J \in L^2(\R^{+})$ (cf. \eqref{el2} for a similar conclusion). Therefore \eqref{EBB_4} and \eqref{EBB_5} imply $(z_j)_t \in L^2(\R^{+})$.

\medskip

\underline{Step 2 (Time derivative of $\tilde H(y_t)$)}:
We obtain
\begin{align}
\begin{split} \label{lyapunov_time_der}
 \frac{\d}{\d t}\tilde H(y_t) & =  \rho  \int_0^L u_{ttt} u_{tt} \d{}x+ \Lambda \int_0^L u^{\prime\prime}_{tt} u^{\prime\prime}_t  \d{}x \\
 &\quad +
J u'_{ttt}(L) {u}'_{tt}(L) + M u_{ttt}(L){u}_{tt}(L) \\
& =  {u}_{tt}(L) \big(M u_{ttt}(L)  - \Lambda u^{\prime\prime\prime}_t(L)\big)\\&\quad +  {u}'_{tt}(L) \big(  \Lambda u^{\prime\prime}_t(L)  + J u'_{ttt}(L)  \big)\\
& = - {u}_{tt}(L) \big( 
(z_2)_t ^{\top} \nabla c_2(z_2) +  k_2'(u(L)) u_t(L)\big) \\
&\quad  -  {u}'_{tt}(L) \big( 
(z_1)_t^{\top} \nabla c_1(z_1) + k_1'(u'(L)) u'_t(L)\big)\\
&\quad  - d_2'(u_{t}(L)) \big(u_{tt}(L)\big)^2 - d_1'(u'_{t}(L))\big( u'_{tt}(L)\big)^2,
\end{split}
\end{align}
\noindent{}where we have performed partial integration in $x$ twice, and then used \eqref{system_time_der} and \eqref{theta_der}. 
Integrating \eqref{lyapunov_time_der} on the time interval $[0,t]$, for some arbitrary $t \in \R^{+}$, we get with \eqref{d:c}
\begin{equation} \label{lyapunov_time_int}
\tilde H(y_t(t))  \le\tilde  H(y_t(0)) + I_1(t) + I_2(t),
\end{equation}
where
\begin{align*}
I_1(t) &:=  -\int_0^t \!\!{u}'_{tt}(L) \Big( 
 (z_1)_t^{\top} \nabla c_1(z_1) + k_1'(u'(L)) u'_t(L)\Big) \d{}s,\\
I_2(t) &:=  -\int_0^t \!\!{u}_{tt}(L) \Big( 
 (z_2)_t ^{\top} \nabla c_2(z_2) +  k_2'(u(L)) u_t(L)\Big) \d{}s.
\end{align*}

\medskip

\underline{Step 3 (Boundedness of $I_1$ and $I_2$)}: Next, we show uniform boundedness for each component of $I_2$ by using partial integration in time:
\begin{align*}
-\int_0^t u_{tt}(L) k_2^{\prime}(u(L)) u_t(L) \d{}s  
 &= -\frac12 \left(u_t(t,L)\right)^2 k_2^{\prime}(u_t(t,L)) + \frac12 \left(u_t(0,L)\right)^2 k_2^{\prime}(u_t(0,L))\\
&\quad+ \frac12 \int_0^t u_t(L)^3 k_2^{\prime\prime}(u(L)) \d{}s \le C, \quad \forall t\ge 0.
\end{align*}
Further, it holds that
\begin{align*}
\int_0^t{u}_{tt}(L) (z_2)_t^{\top} \nabla c_2(z_2)  \d s 
& = {u}_{t}(t,L) (z_2)_t(t)^{\top} \nabla c_2(z_2(t)) 
- {u}_{t}(0, L) (z_2)_t(0)^{\top} \nabla c_2(z_2(0))  \\
&\qquad- \int_0^t u_t(L)\big [ (z_2)_t^{\top} \big[\hess(c_2)(z_2)\big](z_2)_t
+  (z_2)_{tt}^{\top} \nabla c_2(z_2) \big]\d{}s.
\end{align*}
Since $c_2 \in C^2(\R^{n_2};\R)$ and $z_2(t)\in L^\infty(\R^+)$, it follows that \[ \int_0^t| u_t(L)(z_2)_t^{\top} \big[\hess(c_2)(z_2)\big](z_2)_t |\d s \le C \int_0^t {| (z_2)_t |^2 \d s},\]
and (with \eqref{controller_time_der})
\begin{align*}
\int_0^t u_{t}(L) (z_2)_{tt}^{\top} \nabla c_2(z_2)  \d{}s  
& =\int_0^t  u_{t}(L) [J_{a_2}(z_2) (z_2)_t + u_t(L) J_{b_2}(z_2) (z_2)_t]^{\top} \nabla c_2(z_2)  \d{}s\\
&\quad +  \int_0^t  \nabla c_2(z_2)^{\top} b_2(z_2) u_{tt}(L) u_{t}(L)\d{}s \\
 &=  \int_0^t u_{t}(L) [J_{a_2}(z_2) (z_2)_t + u_t(L) J_{b_2}(z_2) (z_2)_t]^{\top} \nabla c_2(z_2)  \d{}s\\
 &\quad+  \frac12 \nabla c_2(z_2(t))^{\top} b_2(z_2(t))  u_{t}(t,L)^2 \\
 &\quad- \frac12 \nabla c_2(z_2(0))^{\top} b_2(z_2(0))  u_{t}(0,L)^2 \\
 &\quad- \frac12 \int_0^t \Big( u_{t}(L)^2 (z_2)_t^{\top}\\
 &\qquad\qquad\cdot \big[J_{b_2}(z_2)^{\top} \nabla c_2(z_2) + \hess(c_2)(z_2)b_2(z_2)\big]\Big)\d{}s  \\
 & \le C \int_0^t |u_{t}(L)|^2 + |(z_2)_t |^2 \d{}s \\
 &\quad + \frac12 \nabla c_2(z_2(t))^{\top} b_2(z_2(t))  u_{t}(t,L)^2 \\
 &\quad- \frac12 \nabla c_2(z_2(0))^{\top} b_2(z_2(0))  u_{t}(0,L)^2.
\end{align*}
For the estimate of the second integral we have used the uniform boundedness of $(z_2)_t$, see the discussion before Step~1 of this proof. The uniform boundedness of $I_1$ follows analogously. Hence,  $\tilde H(y_t(t))$ is uniformly bounded in time. Furthermore, it can be seen that all the positive constants $C$ appearing in the above calculations depend continuously on the initial conditions. This concludes the proof.
\end{proof}

In order to extend this result to all classical solutions, we need the following density argument.

\begin{lem} \label{density}
For any $y \in D(\LL)$ there is a sequence $\{y_{n}\}_{n \in \N}$ in $D(\LL^2)$ such that $\lim_{n \to \infty}{y_n} =y$ and $\lim_{n \to \infty}{\LL y_n} = \LL y$.
\end{lem}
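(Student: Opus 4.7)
My plan is to approximate $y=[u,v,z_1,z_2,\xi,\psi]^\top\in D(\LL)$ in two stages: a linear resolvent step to land in $D(\A^2)$, followed by a small nonlinear correction supported near $x=L$ to reach $D(\LL^2)$. The key observation is that $D(\LL^2)=\{y\in D(\LL):\LL y\in D(\LL)\}$ demands, on top of $D(\LL)$, the higher regularity $u\in\tilde H_0^6$ (with $u^{\rom{4}}(0)=u^{\rom{5}}(0)=0$), $v\in\tilde H_0^4$, and the two \emph{nonlinear} compatibility conditions at $x=L$:
\begin{align*}
\tfrac{J\Lambda}{\rho}\,u^{\rom{5}}(L) &= \Lambda u''(L)+c_1(z_1)+d_1(\xi/J)+k_1(u'(L)),\\
-\tfrac{M\Lambda}{\rho}\,u^{\rom{4}}(L) &= \Lambda u'''(L)-c_2(z_2)-d_2(\psi/M)-k_2(u(L)).
\end{align*}
Elements of $D(\A^2)$ satisfy only the \emph{linearized} versions of these (with $C_j,D_j,K_j$ replacing $c_j,d_j,k_j$), so the gap between the two conditions is carried exactly by the quantities $\gamma_j,\delta_j,\kappa_j$.

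For Stage 1, I set $y^{(n)}:=n(n-\A)^{-1}y$. Standard $C_0$-semigroup theory gives $y^{(n)}\in D(\A^2)$, $y^{(n)}\to y$ in $\HH$, and $\A y^{(n)}=n(n-\A)^{-1}\A y\to\A y$ in $\HH$. For Stage 2, I construct a correction $\phi_n\in C^\infty([0,L])$ supported in $[L-\epsilon_n,L]$ with $\epsilon_n\to 0$, vanishing together with its first three derivatives at $L$, and with $\phi_n^{\rom{4}}(L),\phi_n^{\rom{5}}(L)$ chosen precisely so that $u^{(n)}+\phi_n$ satisfies the nonlinear compatibility conditions at the data of $y^{(n)}$. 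A short computation exploiting the linearized compatibility already satisfied by $y^{(n)}$ shows that these prescribed boundary values of $\phi_n$ equal the finite quantities $\gamma_j,\delta_j,\kappa_j$ evaluated at the convergent, hence bounded, data of $y^{(n)}$, and are therefore bounded in $n$. Constructing $\phi_n$ by rescaling two fixed $C^\infty$ templates on $[0,1]$ to the shrinking window of length $\epsilon_n$ then yields the estimate $\|\phi_n\|_{H^4}=O(\epsilon_n^{1/2})\to 0$.

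Finally, I set $y_n:=y^{(n)}+[\phi_n,0,0,0,0,0]^\top$. Since $\phi_n$ vanishes near $x=0$, the vanishing conditions at $0$ inherited from $y^{(n)}\in D(\A^2)$ are preserved, while the nonlinear compatibility at $L$ holds by construction, so $y_n\in D(\LL^2)$. As $\phi_n\to 0$ in $H^4$, we have $y_n\to y$ in $\HH$ and $\A(y_n-y^{(n)})\to 0$ in $\HH$ (its only nonzero entries are $-\tfrac{\Lambda}{\rho}\phi_n^{\rom{4}}$ in the second slot and the boundary traces $\phi_n^{(k)}(L)$ for $k\le 3$ in the last two slots, all vanishing via $H^4\inj C^3$). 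Hence $\A y_n\to\A y$, and continuity of $\mathcal N$ on $\HH$ then gives $\LL y_n\to\LL y$. The main obstacle is that the mismatch between $D(\A^2)$ and $D(\LL^2)$ is $O(1)$ in the high-order boundary derivatives at $L$, not small, so the correction is forced to carry order-$1$ values for $\phi_n^{\rom{4}}(L),\phi_n^{\rom{5}}(L)$; placing it on the shrinking support $[L-\epsilon_n,L]$ is exactly what makes its $H^4$-norm negligible nonetheless.
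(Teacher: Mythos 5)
Your proposal is correct, but it takes a genuinely different route from the paper. The paper builds the approximating sequence by hand: it approximates $v$ by functions in $\tilde C_0^\infty(0,L)$, keeps $z_1,z_2$ fixed, resets $\xi_n=Jv_n'(L)$, $\psi_n=Mv_n(L)$, and then constructs $u_n$ as $\tilde u_n+h_n$, where $h_n$ is an explicit polynomial (monomials of degrees $2,3,6,\dots,11$) solving a six-dimensional interpolation problem that encodes the traces at $x=0$ and $x=L$ together with the two nonlinear compatibility conditions, and $\tilde u_n\in C_0^\infty(0,L)$ approximates $u-h_n$ in $H^4_0(0,L)$; the argument hinges on the non-vanishing determinant of that linear system. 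You instead invoke the Yosida-type regularization $y^{(n)}=n(n-\A)^{-1}y$, which by standard contraction-semigroup theory lies in $D(\A^2)$ and converges to $y$ in the graph norm of $\A$, and you then repair only the two top-order boundary mismatches at $x=L$: since $y^{(n)}$ already satisfies the linearized compatibility conditions, the required values of $\phi_n^{\rom{4}}(L)$ and $\phi_n^{\rom{5}}(L)$ are exactly $\gamma_j+\delta_j+\kappa_j$ evaluated at the convergent data of $y^{(n)}$, hence bounded, and your rescaled boundary-layer templates give $\|\phi_n\|_{H^4}=\mathcal O(\epsilon_n^{1/2})\to0$ while leaving the traces up to third order at $L$ (and everything at $x=0$) untouched; continuity of $\mathcal N$ on $\HH$ then upgrades $\A y_n\to\A y$ to $\LL y_n\to\LL y$. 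Both arguments are complete; yours buys freedom from the explicit polynomial algebra and determinant check, cleanly isolates the nonlinearity as an $\mathcal O(1)$ mismatch concentrated in the fourth and fifth derivatives at $x=L$, and handles all components (including the needed upgrade $v\in\tilde H_0^2\to\tilde H_0^4$) in one stroke via the resolvent, at the price of relying on semigroup machinery and a scaling estimate; the paper's construction is more elementary and self-contained, using only density of smooth functions and explicit interpolation. When writing your version out in full, do record the characterizations of $D(\A^2)$ and $D(\LL^2)$ (the paper's conditions \eqref{night4}--\eqref{night6} and their linearized analogues) and the short subtraction that yields $\frac{\Lambda J}{\rho}\phi_n^{\rom{5}}(L)=\gamma_1(z_1^{(n)})+\delta_1(\xi^{(n)}/J)+\kappa_1((u^{(n)})'(L))$ and its counterpart for $\phi_n^{\rom{4}}(L)$, as these are the only places where the structure of the operator enters.
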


\begin{proof}
Let an arbitrary $y \in D(\LL)$ be fixed. Notice that it suffices to show that there exists a sequence $\{y_n\}_{n \in \N}$ with $y_n = [u_n \, v_n \, z_{1n} \, z_{2n} \, \xi_n \psi_n]^{\top}$ in $D(\LL^2)$ such that $\lim_{n \rightarrow \infty}{y_n} =y$ in the space $H^4(0,L) \times H^2(0,L) \times \R^{n_1} \times \R^{n_2} \times \R \times \R$.
The set $D(\LL^2)=\{y\in D(\LL):\LL y\in D(\LL)\}$ is equivalent to
\begin{align}
v &\in \tilde H_0^4(0,L),\label{night4}\\
u\in \tilde H_0^6(0,L) \,\wedge \, u^{\rom{4}}(0)&= u^{\rom{5}}(0)=0,\label{night5}\\
\xi&=Jv'(L),\label{night1}\\
\psi &=Mv(L),\label{night2}\\
\frac{\Lambda J}{\rho}u^{\rom{5}}(L)&= \Lambda u^{\prime\prime}(L) + \big[c_1(z_1) + d_1\Big(\frac{\xi}{J}\Big)+ k_1(u'(L))\big],\label{night3}\\
\frac{\Lambda M}{\rho}u^{\rom{4}}(L)&=-\Lambda u^{\prime\prime\prime}(L) + \big[c_2(z_2) 
+ d_2\Big(\frac{\psi}{M}\Big) + k_2(u(L))\big].\label{night6}
\end{align}
Since $\tilde{C}^{\infty}_0(0, L):=\{f \in C^{\infty}[0,L] \colon f^{(k)}(0) = 0, \forall k \in \N_0 \}$ is dense in $\tilde{H}^2_0(0,L)$ (see Theorem~3.17 in \cite{adams}), there exists a sequence $\{v_n\}_{n \in \mathbb{N}} \subset \tilde{C}^{\infty}_0(0, L)$ such that $\lim_{n \to\infty}{v_n} = v$ in $H^2(0,L)$. Also, $v_n$ satisfies \eqref{night4}, for all $n \in \N$. Defining $\xi_n := J v_n'(L)$ and $\psi_n := M v_n(L)$ ensures that $y_n$ satisfies \eqref{night1} and \eqref{night2}. Moreover, the Sobolev embedding $H^2(0,L) \inj C^1[0,L]$ implies that $\lim_{n \to\infty}{\xi_n} = \xi$ and $\lim_{n \to\infty}{\psi_n} = \psi$ as well. Next, let $z_{1n} := z_1$ and $z_{2n} := z_2$ for all $n \in \N$.

Finally, the sequence $\{ u_n \}_{n \in \mathbb{N}} \subset C^{\infty}[0,L]$ will be constructed such that $u_n$ satisfies \eqref{night5}, \eqref{night3}, and \eqref{night6} for all $n \in \mathbb{N}$, and $\lim_{n \to \infty}{u_n} = u$ in $H^4(0,L)$. To this end we introduce an auxiliary sequence of polynomial functions
\begin{align*}
h_n(x) &:= h_{2,n} x^2 + h_{3,n} x^3 + h_{6,n} x^6 + h_{7,n} x^7 + h_{8,n} x^8 \\
&\quad+ h_{9,n} x^9 + h_{10,n} x^{10} + h_{11,n} x^{11},
\end{align*}
for all $n \in \N$, where $h_{2,n}, \dots, h_{11,n} \in \R$ are to be determined. It immediately follows that
\begin{equation} \label{bc_0}
h_n(0) = h_n'(0) = h_n^{\rom{4}}(0) = h_n^{\rom{5}}(0) = 0.
\end{equation}  Let $h_{2,n} = \frac{u^{\prime\prime}(0)}{2}$ and $h_{3,n} = \frac{u^{\prime\prime\prime}(0)}{6}$, which is equivalent to
\begin{equation} \label{bc_00}
h_n^{\prime\prime}(0) = u^{\prime\prime}(0),\,\, h_n^{\prime\prime\prime}(0) = u^{\prime\prime\prime}(0).
\end{equation}
Further conditions are imposed on $h_n$:
\[\qquad h_n^{(k)}(L) = u^{(k)}(L), \qquad k \in \{0,1,2,3\}.\]
This can equivalently be written in terms of coefficients\footnote{The coefficient $k^{\underline{l}}$ (the Pochhammer symbol, see \cite{Knuth}) for $k,l \in \mathbb{N}$, $l \le k$ is defined by $k^{\underline{l}} := k\cdot (k-1) \cdots (k-l+1)$.}:
\begin{subequations} \label{cond1}
\begin{align}
r_1&=h_{6,n} + h_{7,n} L + h_{8,n} L^2 + h_{9,n} L^3
+ h_{10,n} L^{4} + h_{11,n} L^{5}, \\
r_2&= 6 h_{6,n} + 7 h_{7,n} L + 8 h_{8,n} L^2 + 9 h_{9,n} L^3 + 10 h_{10,n} L^{4} + 11 h_{11,n} L^{5},\\
r_3&= 6^{\underline{2}} h_{6,n} + 7^{\underline{2}} h_{7,n} L + 8^{\underline{2}} h_{8,n} L^2+ 9^{\underline{2}} h_{9,n} L^3  + 10^{\underline{2}} h_{10,n} L^{4} + 11^{\underline{2}} h_{11,n} L^{5}\\
r_4&=6^{\underline{3}} h_{6,n} + 7^{\underline{3}} h_{7,n} L + 8^{\underline{3}} h_{8,n} L^2+ 9^{\underline{3}} h_{9,n} L^3+ 10^{\underline{3}} h_{10,n} L^{4} + 11^{\underline{3}} h_{11,n} L^{5},
\end{align}
\end{subequations}
with
\begin{align*}
r_1 &=  \frac{u(L)}{L^6} - \frac{u^{\prime\prime}(0)}{2 L^4} - \frac{u^{\prime\prime\prime}(0)}{6 L^3},\qquad
r_2  =  \frac{u'(L)}{L^5} - \frac{u^{\prime\prime}(0)}{L^4} - \frac{u^{\prime\prime\prime}(0)}{2 L^3},\\
r_3  &=  \frac{u^{\prime\prime}(L)}{L^4} - \frac{u^{\prime\prime}(0)}{L^4} - \frac{u^{\prime\prime\prime}(0)}{L^3},\qquad
r_4  =  \frac{u^{\prime\prime\prime}(L)}{L^3} - \frac{u^{\prime\prime\prime}(0)}{L^3}.
\end{align*}
We further require that $h_n$ satisfies:
\begin{align}
\frac{\Lambda M }{\rho} h_n^{\rom{4}}(L) &= -\Lambda u^{\prime\prime\prime}(L) + \Big[c_2(z_2) +  d_2\Big(\frac{ \psi_n}{M}\Big) + k_2(u(L))\Big]=:r_5,\label{bc_1}\\
\frac{ \Lambda J}{\rho} h_n^{\rom{5}}(L) &= \Lambda u^{\prime\prime}(L) +\Big[c_1(z_1) + d_1\Big(\frac{\xi_n}{J}\Big)+ k_1(u'(L))\Big]:=r_6, \label{bc_11}
\end{align}
where \eqref{bc_1} and \eqref{bc_11} are equivalent to:
\begin{subequations} \label{cond2}
\begin{align}
 r_5\frac\rho{\Lambda M L^2}&=6^{\underline{4}} h_{6,n} + 7^{\underline{4}} h_{7,n} L + 8^{\underline{4}} h_{8,n} L^2 + 9^{\underline{4}} h_{9,n} L^3 + 10^{\underline{4}} h_{10,n} L^{4} + 11^{\underline{4}} h_{11,n} L^{5} ,\\
 r_6\frac\rho{\Lambda JL}&=6^{\underline{5}} h_{6,n} + 7^{\underline{5}} h_{7,n} L + 8^{\underline{5}} h_{8,n} L^2+ 9^{\underline{5}} h_{9,n} L^3 + 10^{\underline{5}} h_{10,n} L^{4} + 11^{\underline{2}} h_{11,n} L^{5} .
\end{align}
\end{subequations}
Such $h_n$ exists and is unique, due to the fact that linear system \eqref{cond1} and \eqref{cond2} has strictly positive determinant.
Consequently, \eqref{bc_0}, \eqref{bc_00}, and \eqref{cond1} imply that $u - h_n \in H^4_0(0,L)$, for all $n \in \N$. Since $C^{\infty}_0(0,L)$ is dense in $H^4_0(0,L)$, there exists a sequence $\{\tilde{u}_n\}_{n \in \mathbb{N}} \subset C^{\infty}_0(0,L)$ such that $\|\tilde{u}_n - (u - h_n)\|_{H^4} < \frac{1}{n}$, $\forall n \in \N$.  Now defining $u_n := \tilde{u}_n + h_n$, gives $\lim_{n \to \infty}{u_n} = u$ in $H^4(0,L)$. Obviously $u_n$ satisfies \eqref{night5} for all $n \in \mathbb{N}$. Also, due to \eqref{bc_1} and \eqref{bc_11}, $u_n$ satisfies \eqref{night3} and \eqref{night6}, as well. Hence, the statement follows.
\end{proof}

\begin{trm} \label{precompactness}
For all $y_0 \in D({\LL})$ the trajectory $\gamma(y_0)$ is precompact in $\HH$.
\end{trm}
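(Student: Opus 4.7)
My plan is to reduce the statement for $y_0 \in D(\LL)$ to the one already proved for $y_0 \in D(\LL^2)$ in Lemma~\ref{precomp_2}, by using the density result of Lemma~\ref{density} together with weak compactness in the Hilbert space $\HH$. The key observation is that, for a classical solution, $y_t(t) = \LL y(t)$, so a uniform bound on $\|y_t(t)\|_\HH$ is the same as a uniform bound on $\|\LL y(t)\|_\HH$, and the latter produces precompactness in $\HH$ via the compact Sobolev embeddings $\tilde H_0^4 \inj\inj \tilde H_0^2$ and $\tilde H_0^2 \inj\inj L^2$ acting componentwise.

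The first step is to pick $y_0 \in D(\LL)$, invoke Lemma~\ref{density} to obtain a sequence $\{y_{0,n}\}\subset D(\LL^2)$ with $y_{0,n}\to y_0$ and $\LL y_{0,n}\to \LL y_0$ in $\HH$, and denote by $y_n(t)$, $y(t)$ the corresponding classical solutions. Lemma~\ref{precomp_2} applied to $y_{0,n}$ gives $\|y_{n,t}(t)\|_\HH \le C_n$ for all $t\ge 0$, with $C_n$ depending \emph{continuously} on $\|y_{0,n}\|_\HH$ and $\|y_{n,t}(0)\|_\HH = \|\LL y_{0,n}\|_\HH$. Since both sequences $\|y_{0,n}\|_\HH$ and $\|\LL y_{0,n}\|_\HH$ are convergent, they are bounded, hence $\bar C := \sup_n C_n < \infty$, yielding the crucial uniform estimate
\begin{equation*}
 \|\LL y_n(t)\|_\HH = \|y_{n,t}(t)\|_\HH \le \bar C,\quad \forall n\in\N,\ \forall t\ge 0.
\end{equation*}

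The technical core is to transfer this bound from $y_n$ to $y$. For each fixed $t\ge 0$ we know from Proposition~\ref{prp_2} (and Remark~\ref{rem48}) that $y_n(t)\to y(t)$ in $\HH$, while $\{\LL y_n(t)\}_{n\in\N}$ is bounded. By reflexivity of $\HH$, a subsequence $\LL y_{n_k}(t)$ converges weakly to some $w\in \HH$ with $\|w\|_\HH \le \bar C$. Now $\LL = {\A}+\mathcal N$ with ${\A}$ closed (being a semigroup generator) and $\mathcal N:\HH\to\HH$ continuous; hence $\LL$ is closed, and in fact weakly sequentially closed (linear closedness of $\mathcal A$ plus continuity of $\mathcal N$ under strong convergence of $y_{n_k}(t)\to y(t)$). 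Therefore $y(t)\in D(\LL)$ and $\LL y(t) = w$, which gives $\|\LL y(t)\|_\HH \le \bar C$ for all $t\ge 0$. I expect this weak-closedness step to be the most delicate point, because $\LL y_n(t)$ is only bounded, not strongly convergent, so the argument must genuinely rely on reflexivity and on handling $\mathcal N$ through the strong convergence of $y_n(t)$.

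Finally, $\sup_t\|\LL y(t)\|_\HH < \infty$ translates via the explicit form of $\LL$ into uniform-in-time bounds on $u(t)$ in $\tilde H_0^4(0,L)$, on $v(t)$ in $\tilde H_0^2(0,L)$, and on the finite-dimensional components $z_j(t), \xi(t), \psi(t)$. The compact embeddings $\tilde H_0^4(0,L)\inj\inj \tilde H_0^2(0,L)$, $\tilde H_0^2(0,L)\inj\inj L^2(0,L)$, together with finite-dimensionality of the remaining components, then show that $\gamma(y_0)=\bigcup_{t\ge 0} S(t)y_0$ is precompact in $\HH$, completing the proof.
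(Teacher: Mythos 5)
Your proposal is correct, and its skeleton coincides with the paper's: approximate $y_0\in D(\LL)$ by $y_{0,n}\in D(\LL^2)$ via Lemma~\ref{density}, use Proposition~\ref{prp_2} for $y_n(t)\to y(t)$, and invoke the continuous dependence of the constant in \eqref{unif_bound} on $\|y_0\|_\HH$ and $\|y_t(0)\|_\HH=\|\LL y_0\|_\HH$ to get the $n$-uniform bound $\sup_{t\ge0}\|\LL y_n(t)\|_\HH\le\bar C$. Where you genuinely differ is the transfer of this bound to the limit trajectory. The paper views $\{(y_n)_t\}$ as a bounded set in $L^\infty(\R^+;\HH)$, extracts a weak-$*$ limit $w$ by Banach--Alaoglu, identifies $\int_0^t w(\tau)\d\tau = y(t)-y(0)$ by testing against arbitrary $z\in\HH$, and then differentiates (using that $y$ is a classical solution) to conclude $y_t=w\in L^\infty(\R^+;\HH)$. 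You instead argue pointwise in $t$: boundedness of $\LL y_{n}(t)$ in the Hilbert space $\HH$ gives a weakly convergent subsequence, the strong convergence $y_{n_k}(t)\to y(t)$ and continuity of $\mathcal N$ give $\mathcal N y_{n_k}(t)\to\mathcal N y(t)$, and then the weak closedness of the graph of the closed linear generator ${\A}$ identifies the weak limit as $\LL y(t)$, with $\|\LL y(t)\|_\HH\le\bar C$ by weak lower semicontinuity of the norm. Both mechanisms are valid; yours is arguably more elementary (no $L^\infty$ duality, no a.e./integral identification, no appeal to the differentiability of $t\mapsto y(t)$ beyond $y(t)\in D(\LL)$), while the paper's argument packages the whole time axis at once and directly yields $y_t\in L^\infty(\R^+;\HH)$. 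Two small points to make explicit: the split $\A y_{n_k}(t)=\LL y_{n_k}(t)-\mathcal N y_{n_k}(t)$ is what justifies your ``weak sequential closedness'' claim for the nonlinear $\LL$ (a general nonlinear operator need not be weakly closed), and in the final step the uniform $H^4\times H^2$ bounds require combining $\sup_t\|\LL y(t)\|_\HH<\infty$ with the uniform bound $\sup_t\|y(t)\|_\HH<\infty$ from Theorem~\ref{trm_1} (e.g.\ to control $u''$ and the finite-dimensional components), exactly as in the reduction stated at the beginning of the proof of Lemma~\ref{precomp_2}.
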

\begin{proof}
Let $y_0 \in D(\LL)$ be chosen arbitrarily, and let $\{y_{n0}\}_{n \in \N} \subset D(\LL^2)$ be an approximating sequence as in Lemma~\ref{density}. Then there holds:
\begin{equation} \label{stronger_conv}
\lim_{n \to \infty}{\LL y_{n0}}=\LL y_0.
\end{equation}
For an arbitrary $T>0$, and by applying Proposition~\ref{prp_2} it follows that the approximating solutions $y_n(t)$ converge to $y(t)$ in $C([0,T]; \HH)$.  Since $y_n(t) \in C^1([0, \infty);\HH)$ and solves \eqref{ivp} for all $n \in \N$, \eqref{stronger_conv} yields
\begin{equation} \label{derivative_conv}
\lim_{n \to\infty}{(y_{n})_t(0)} = \LL y_0 \,\, \text{ in } \,\, \HH.
\end{equation}
Hence, \eqref{unif_bound} and \eqref{derivative_conv} imply that there exists a constant $C>0$ such that for all $n \in \N$:
\[
\sup_{t\ge0}{\|(y_n)_t(t)\|_{\HH}} \le C(\|y_0\|_{\HH}, \|\LL y_0\|_{\HH}),
\]
where the constant $C$ does not depend on $n$.  From here it follows that $(y_n)_t$ is bounded in $L^{\infty}(\R^+ ; \HH)$. Hence, the Banach-Alaoglu Theorem (see Theorem~I.3.15 in \cite{Rudin}) implies that there exists $w \in L^{\infty}(\R^+ ; \HH)$ and a subsequence $\{y_{n_k}\}_{k \in \N}$ such that
\[ (y_{n_k})_t \stackrel{\ast}{\rightharpoonup} w \,\, \text{ in } \,\,L^{\infty}(\R^+ ; \HH).\] For arbitrary $z \in \HH$ and $t \ge 0$ there holds
\[\lim_{k \to \infty}{\int_0^t \la (y_{n_k})_t(\tau) , z \ra_{\HH} \d{}\tau} = \int_0^t \la w(\tau), z \ra_{\HH} \d{}\tau,\] which is equivalent to
\[\lim_{k \to \infty}  \la y_{n_k}(t) - y_{n_k}(0) , z \ra_{\HH}  =  \Big\la \int_0^t w(\tau) \d{}\tau, z\Big \ra_{\HH}. \] Since $\lim_{n \to \infty}y_n(\tau) = y(\tau)$ (in $\HH$) for all $\tau \in [0, \infty)$, it follows that
\[\la y(t) - y(0) , z \ra_{\HH}  = \Big\la \int_0^t w(\tau) \d{} \tau, z \Big\ra_{\HH}.\]
Since $z\in  \HH$ was arbitrary, we obtain
\begin{equation} \label{der}
y(t) - y(0) = \int_0^t w(\tau) \d{} \tau, \quad\forall t\ge 0.
\end{equation}
Due to continuous differentiability of $y$, the time derivative of \eqref{der} can be taken, which yields $y_t \equiv w$. This implies $y_t \in L^{\infty}(\R^+ ; \HH)$, i.e.~$\|y_t(\cdot)\|_{\HH}$ is uniformly bounded, which proves the theorem.
\end{proof}

\begin{cor}
For any $y_0\in D(\LL)$ there holds $\lim_{t\to\infty}=S(t)y_0=0$.
\end{cor}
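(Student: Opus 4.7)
The plan is to combine the three main ingredients we have assembled in this section with the analysis of the $\omega$-limit set from Section~\ref{S5}. Since $y_0\in D(\mathcal{L})$, Theorem~\ref{precompactness} tells us that the trajectory $\gamma(y_0) = \bigcup_{t\ge 0} S(t)y_0$ is precompact in $\mathcal H$. In particular, for any sequence $t_n \to \infty$, the sequence $\{S(t_n)y_0\}_{n\in\N}$ admits a convergent subsequence in $\mathcal H$, whose limit belongs to $\omega(y_0)$ by the very definition of the $\omega$-limit set. Thus $\omega(y_0)$ is non-empty.

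Next I invoke Theorem~\ref{om_lise_0}, which asserts that for any $y_0 \in \mathcal H$, either $\omega(y_0) = \emptyset$ or $\omega(y_0) = \{0\}$. Combined with the previous paragraph this forces $\omega(y_0) = \{0\}$. In other words, there exists a sequence $t_n \to \infty$ along which $S(t_n)y_0 \to 0$ in $\mathcal H$.

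To pass from convergence along a subsequence to convergence of the entire trajectory, I appeal to Corollary~\ref{i_r}: since $\omega(y_0)\neq\emptyset$, the continuity and monotonicity of $t\mapsto H(S(t)y_0)$ (Theorem~\ref{trm_1}) together with $H(S(t_n)y_0)\to H(0)=0$ yield $H(S(t)y_0)\to 0$ as $t\to\infty$, and then the coercivity estimate in Lemma~\ref{lem83} (via assumption~\eqref{v:c_2}) delivers $\|S(t)y_0\|_\mathcal{H} \to 0$.

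No step here is a serious obstacle, since the heavy lifting has already been performed in Theorems~\ref{om_lise_0} and~\ref{precompactness}; the corollary is essentially a one-line synthesis, with the only thing to verify being that precompactness of $\gamma(y_0)$ indeed implies non-emptiness of $\omega(y_0)$, which is immediate from the sequential characterization of precompact sets in a metric space.
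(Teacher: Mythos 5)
Your argument is correct and is exactly the synthesis the paper intends: precompactness of $\gamma(y_0)$ from Theorem~\ref{precompactness} gives $\omega(y_0)\neq\emptyset$, Theorem~\ref{om_lise_0} then forces $\omega(y_0)=\{0\}$, and Corollary~\ref{i_r} upgrades this to convergence of the whole trajectory via monotonicity of $H$. Nothing is missing; your added remarks on why $\omega(y_0)\neq\emptyset$ and on the mechanism inside Corollary~\ref{i_r} are consistent with the paper's proofs.
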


\section{Conclusions}

In this paper, we provide a rigorous stability proof of a lossless Euler-Bernoulli beam with tip mass which is feedback interconnected with a nonlinear spring-damper system and a strictly passive nonlinear dynamical system.  Such a configuration comes into play if the tip payload is interacting with a nonlinear passive environment, if the (nonlinear) dynamics of the torque and force actuators are also taken into account, or for a combination of these cases. It is well known that the feedback interconnection of passive systems is passive with a storage function that is the sum of the storage functions of all subsystems. In the finite-dimensional case, this property is advantageously utilized for the controller design where the storage function usually qualifies as an appropriate Lyapunov function candidate. For the infinite-dimensional system under consideration, the passivity property still ensures that the storage functional is non-increasing along classical solutions, however, it is well known that this does not directly entail asymptotic stability. In fact, a crucial step in the stability analysis is to prove the precompactness of the trajectories. For linear evolution problems this has been reported in many contributions in the literature, but when considering nonlinearities this is much more involved. Under rather mild conditions on the parameters and functions appearing in the resulting PDE--ODE model representing the overall closed-loop system, global-in-time wellposedness is proven by means of semigroup theory and the precompactness of the trajectories is shown by deriving uniform-in-time bounds on the solution and its time derivatives. With this, asymptotic stability of classical solutions can be guaranteed.


\appendix
\section{The Operator ${\A}_p$}

The system \eqref{int_verss} is the  mild formulation of the evolution problem $(y_p)_t={\A}_py_p$ with $y_p=[u,v]^\top\in \HH_p$. Thereby $\HH_p:=\tilde H_0^2(0,L)\times L^2(0,L)$, and \[{\A}_p:\begin{bmatrix}
                       u\\ v
                  \end{bmatrix}\mapsto\begin{bmatrix} v\\-\frac\Lambda\rho u^\rom{4}\end{bmatrix},\]
with the domain
\begin{align*}
D({\A}_p)=\big\{[u,v]^\top\!\!\in\HH_p&\colon u\in \tilde H_0^4(0,L), v\in \tilde H_0^2(0,L),  \\
&\phantom{\colon}\Lambda u^{\prime\prime}(L)+\tilde K_1 u'(L)= 0, \Lambda u^{\prime\prime\prime}(L)-\tilde K_2 u(L)=0\big\}.
\end{align*}

The space $\HH_p$ is equipped with the following inner product:
\begin{equation}\label{App1}
\begin{split}
\la y_p,\tilde y_p\ra_p &:= {\Lambda} \int_{0}^{L}{u^{\prime\prime} \tilde u^{\prime\prime} \d x} + {\rho} \int_{0}^{L}{v\tilde v \d x}\\
&\quad\quad+ \tilde K_1 u'(L) \tilde u'(L) + \tilde K_2 u(L) \tilde u(L).
\end{split}
\end{equation}

The constants $\tilde K_1,\tilde K_2$ are defined in \eqref{5_star} and depend, at first glance, on the fixed $y_0\in \Omega$ in the proof of Theorem~\ref{om_lise_0}. Hence, $D(\LL_p)$ and the above inner product also depend on $y_0$. But this does not cause any problems. Anyhow, Step 2 in the proof of Theorem~\ref{om_lise_0} shows that $u_0(L)=u_0'(L)=0$. Hence, $\tilde K_j=K_j$.

We have the following results:

\begin{lem}\label{invertdf}
The {operator ${\A}_p^{-1}:\HH_p\to D({\A}_p)$} exists and is a bijection. Furthermore, ${\A}^{-1}_p$ is compact in $\HH_p$.
\end{lem}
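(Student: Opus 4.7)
The plan is to construct $\A_p^{-1}$ explicitly. Given $[f,g]^\top \in \HH_p = \tilde H_0^2(0,L) \times L^2(0,L)$, the equation $\A_p [u,v]^\top = [f,g]^\top$ decouples into $v = f$ together with the fourth-order boundary value problem
\[
u^{\rom{4}} = -\tfrac{\rho}{\Lambda}\, g, \qquad u(0)=u'(0)=0, \qquad \Lambda u''(L) + \tilde K_1 u'(L) = 0, \qquad \Lambda u'''(L) - \tilde K_2 u(L) = 0.
\]
Thus $v$ is read off directly and lies in $\tilde H_0^2(0,L)$, and the substance of the lemma reduces to unique solvability of this BVP in $\tilde H_0^4(0,L)$ together with an a priori estimate.

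For uniqueness I would test the homogeneous BVP against $u$ and integrate by parts twice; using the clamped conditions at $0$ and the elastic conditions at $L$ yields
\[
0 = \int_0^L u^{\rom{4}}\, u \,\d x = \int_0^L |u''|^2 \,\d x + \tfrac{\tilde K_1}{\Lambda}\,|u'(L)|^2 + \tfrac{\tilde K_2}{\Lambda}\,|u(L)|^2,
\]
which forces $u \equiv 0$ since $\tilde K_1,\tilde K_2 > 0$ (recall that in the application to Theorem~\ref{om_lise_0} one has $u_0(L)=u_0'(L)=0$, hence $\tilde K_j=K_j>0$). For existence I would integrate the right-hand side four times, obtaining a particular solution $w \in H^4(0,L)$ with $w^{(k)}(0)=0$ for $k=0,\dots,3$, and look for $u$ in the form $u = w + c_2 x^2 + c_3 x^3$ (the clamped conditions at $x=0$ eliminate $c_0$ and $c_1$). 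Plugging into the two boundary conditions at $x=L$ produces a $2\times 2$ linear system for $(c_2,c_3)$ whose determinant evaluates to $12\Lambda^2 + 12\Lambda\tilde K_1 L + 4\Lambda\tilde K_2 L^3 + \tilde K_1\tilde K_2 L^4 > 0$, giving a unique $u\in H^4(0,L)$ that by construction lies in $\tilde H_0^4(0,L)$ and satisfies both remaining BCs at $L$. (Alternatively, one could apply Lax-Milgram to the bilinear form $a(u,\varphi) = \Lambda\int_0^L u''\varphi''\,\d x + \tilde K_1 u'(L)\varphi'(L) + \tilde K_2 u(L)\varphi(L)$ on $\tilde H_0^2(0,L)$, which is coercive, and then bootstrap to $H^4$-regularity via the equation.)

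Bounded invertibility, i.e.\ $\|u\|_{H^4} + \|v\|_{H^2} \le C(\|f\|_{H^2}+\|g\|_{L^2})$, is immediate from the construction, so $\A_p^{-1}:\HH_p\to \tilde H_0^4(0,L)\times \tilde H_0^2(0,L)$ is bounded. Compactness in $\HH_p$ then follows by composing with the compact Rellich-Kondrachov embeddings $\tilde H_0^4(0,L)\inj\tilde H_0^2(0,L)$ and $\tilde H_0^2(0,L)\inj L^2(0,L)$.

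The main (and only) mild obstacle I anticipate is verifying strict positivity of the determinant of the $2\times 2$ boundary system at $x=L$; this is a short algebraic check, but it is the step that actually uses the sign hypotheses on $\tilde K_1,\tilde K_2$ and deserves to be written out. Everything else amounts to standard ODE regularity and the compactness of Sobolev embeddings on a bounded interval.
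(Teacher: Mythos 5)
Your proposal is correct and takes essentially the same route as the paper, which simply refers to the argument for Theorem~\ref{teo73} and Section~4.2 of \cite{KT05}: one inverts ${\A}_p$ by explicitly solving the fourth-order boundary value problem (your $2\times2$ determinant $12\Lambda^2+12\Lambda\tilde K_1L+4\Lambda\tilde K_2L^3+\tilde K_1\tilde K_2L^4$ checks out, and the energy identity gives uniqueness), and compactness then follows from the bounded map into $\tilde H_0^4(0,L)\times\tilde H_0^2(0,L)$ composed with the compact Sobolev embeddings. The sign condition $\tilde K_j\ge 0$ (indeed $>0$) that you flag is the same tacit hypothesis the paper uses to make \eqref{App1} an inner product on $\HH_p$, so no additional argument is required beyond what the paper itself assumes.
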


\begin{proof}
The proof is analogous to the proof of Theorem~\ref{teo73}, see also Section 4.2 in \cite{KT05}.
\end{proof}

\begin{lem}\label{skew}
 The operator ${\A}_p$ is skew-adjoint.
\end{lem}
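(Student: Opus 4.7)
The plan is to deduce skew-adjointness of the unbounded $\A_p$ from skew-adjointness of its bounded inverse $\A_p^{-1}$, which is available by Lemma~\ref{invertdf}. Since $\A_p^{-1}\in B(\HH_p)$ has full domain, showing that $\A_p^{-1}$ is skew-adjoint reduces to the purely algebraic identity
\[
\la \A_p^{-1}f,g\ra_p = -\la f,\A_p^{-1}g\ra_p,\quad \forall f,g\in\HH_p.
\]
Inverting back, and using the standard relation $(T^{-1})^* = (T^*)^{-1}$ valid for any injective bounded $T$ with dense range, then yields $\A_p^* = -\A_p$.

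First I would verify that $\A_p$ is skew-symmetric on $D(\A_p)$. For $y_p=[u,v]^\top,\tilde y_p = [\tilde u,\tilde v]^\top\in D(\A_p)$, computing $\la \A_p y_p,\tilde y_p\ra_p$ and integrating the resulting term $-\Lambda\int_0^L u^{\rom{4}}\tilde v\,\d x$ by parts twice produces boundary contributions $-\Lambda u'''(L)\tilde v(L) + \Lambda u''(L)\tilde v'(L)$ at $x=L$; the corresponding boundary terms at $x=0$ vanish because $\tilde v(0) = \tilde v'(0) = 0$. Using the domain conditions $\Lambda u''(L) = -\tilde K_1 u'(L)$ and $\Lambda u'''(L) = \tilde K_2 u(L)$ turns these into $-\tilde K_1 u'(L)\tilde v'(L) - \tilde K_2 u(L)\tilde v(L)$. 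Repeating the computation with the roles of $y_p$ and $\tilde y_p$ swapped and adding, the $\int v''\tilde u''$ and $\int u''\tilde v''$ integrals cancel pairwise, and each of the four $\tilde K_j$-boundary terms is matched with opposite sign by the corresponding $\tilde K_j$-boundary terms from \eqref{App1} of the other expression. Hence $\la \A_p y_p,\tilde y_p\ra_p + \la y_p,\A_p\tilde y_p\ra_p = 0$.

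Next I would transfer this to $\A_p^{-1}$. For $f,g\in\HH_p$, set $y_p := \A_p^{-1}f$, $\tilde y_p := \A_p^{-1}g$, both lying in $D(\A_p)$ by Lemma~\ref{invertdf}. The skew-symmetry of $\A_p$ then gives
\[
\la \A_p^{-1}f,g\ra_p = \la y_p,\A_p\tilde y_p\ra_p = -\la \A_p y_p,\tilde y_p\ra_p = -\la f,\A_p^{-1}g\ra_p,
\]
so $(\A_p^{-1})^* = -\A_p^{-1}$, i.e.~$\A_p^{-1}$ is skew-adjoint. It is also injective since it is a bijection $\HH_p\to D(\A_p)$, so $\ker(\A_p^{-1})^* = \ker(-\A_p^{-1}) = \{0\}$ and hence $D(\A_p) = \ran\A_p^{-1}$ is dense in $\HH_p$. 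Applying $(T^{-1})^* = (T^*)^{-1}$ with $T = \A_p^{-1}$ yields $\A_p^* = ((\A_p^{-1})^{-1})^* = ((\A_p^{-1})^*)^{-1} = (-\A_p^{-1})^{-1} = -\A_p$, completing the proof.

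I do not anticipate any significant obstacle. The integration by parts is routine, with boundary contributions designed precisely to match the $\tilde K_j$-terms in $\la\cdot,\cdot\ra_p$, and the inversion step for passing from a bounded skew-adjoint operator to an unbounded one is a classical functional-analytic fact. The density of $D(\A_p)$ --- the only mildly non-trivial point --- falls out for free from the skew-adjointness and injectivity of $\A_p^{-1}$.
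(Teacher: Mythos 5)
Your proposal is correct and follows essentially the same route as the paper: the identical integration-by-parts computation (with the domain conditions $\Lambda u''(L)=-\tilde K_1u'(L)$, $\Lambda u'''(L)=\tilde K_2 u(L)$ absorbing the boundary terms) establishes skew-symmetry, and the passage to skew-adjointness uses the same ingredient, namely the bijectivity of ${\A}_p^{-1}$ from Lemma~\ref{invertdf}. The only difference is that the paper cites the Corollary of Theorem~VII.3.1 in Yosida (a symmetric operator with range all of the space is self-adjoint, applied here in skew form via $\ran{\A}_p=\HH_p$), whereas you reproduce the standard proof of that criterion by transferring skew-symmetry to the bounded inverse and invoking $(T^{-1})^*=(T^*)^{-1}$; there is no gap in your argument.
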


\begin{proof}
First we show that ${\A}_p$ is skew-symmetric, i.e.~for all $y,\tilde y\in D({\A}_p)$ there holds $\la {\A}_p y, \tilde y\ra_p=-\la y,{\A}_p\tilde y\ra_p$:
\begin{align*}
 \la {\A}_p y, \tilde y\ra_p &= \Lambda\int_0^L v^{\prime\prime}\tilde u^{\prime\prime}\d x-\Lambda \int_0^L u^\rom{4}\tilde v\d x +\tilde K_1v'(L)\tilde u'(L)+\tilde K_2v(L)\tilde u(L)\\
&=\Lambda\Big(\int_0^L v\tilde u^\rom{4}\d x+ v'(L)\tilde u^{\prime\prime}(L)-v(L)\tilde u^{\prime\prime\prime}(L)\\
&\quad- \int_0^L u^{\prime\prime}\tilde v^{\prime\prime}\d x-u^{\prime\prime\prime}(L)\tilde v(L)+u^{\prime\prime}(L)\tilde v'(L)\Big)\\
&\phantom{=\,}+\tilde K_1v'(L)\tilde u'(L)+\tilde K_2v(L)\tilde u(L).
\end{align*}
Using the boundary conditions $ \Lambda u^{\prime\prime}(L)+\tilde K_1 u'(L)=0$ and $\Lambda u^{\prime\prime\prime}(L)-\tilde K_2 u(L)=0$ from $D({\A}_p)$ we obtain:
\begin{align*}
  \la {\A}_p y, \tilde y\ra_p &=\Lambda\int_0^L v\tilde u^\rom{4}\d x- \tilde K_1v'(L)\tilde u'(L)-\tilde K_2v(L)\tilde u(L)\\
  &\quad- \Lambda\!\int_0^L\!\!\! u^{\prime\prime}\tilde v^{\prime\prime}\d x-\tilde K_2u(L)\tilde v(L)-\tilde K_1u'(L)\tilde v'(L)\\
  &\quad+\tilde K_1v'(L)\tilde u'(L)+\tilde K_2v(L)\tilde u(L)\\
&=-\la y,{\A}_p\tilde y\ra_p.
\end{align*}
So ${\A}_p$ is skew-symmetric. Furthermore, due to Lemma~\ref{invertdf} we know that $\ran {\A}_p=\HH_p$. So we can apply the Corollary of Theorem~VII.3.1 in \cite{yosida}, which proves the skew-adjointness of ${\A}_p$.
\end{proof}

\begin{lem}\label{ston}
 ${\A}_p$ generates a $C_0$-semigroup of unitary operators in $\HH_p$.
\end{lem}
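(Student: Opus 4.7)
The plan is to invoke Stone's theorem on one-parameter unitary groups. Lemma~\ref{skew} has already established that $\mathcal{A}_p$ is skew-adjoint, i.e.\ $\ii \mathcal{A}_p$ is self-adjoint on $D(\mathcal{A}_p)$. Stone's theorem then produces a unique strongly continuous one-parameter group $(\e^{t\mathcal{A}_p})_{t\in\R}$ of unitary operators on $\HH_p$ whose infinitesimal generator is $\mathcal{A}_p$. Restricting to $t\ge 0$ yields the $C_0$-semigroup of unitaries claimed in the lemma.

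An equivalent route, closer in spirit to the proof of Theorem~\ref{teo73}, is a double application of the Lumer-Phillips theorem. Skew-symmetry immediately gives $\Re\la \mathcal{A}_p y, y\ra_p = 0$ for every $y\in D(\mathcal{A}_p)$, so both $\mathcal{A}_p$ and $-\mathcal{A}_p$ are dissipative. By Lemma~\ref{invertdf}, $0$ belongs to the resolvent set of $\mathcal{A}_p$, and a standard Neumann-series perturbation of $(\lambda I\mp\mathcal{A}_p)=\mp\mathcal{A}_p(I\mp\lambda\mathcal{A}_p^{-1})\cdot(\text{sign})$ shows $\ran(\lambda I\mp\mathcal{A}_p)=\HH_p$ for all sufficiently small $\lambda>0$. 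Lumer-Phillips then furnishes two contraction $C_0$-semigroups $(\e^{t\mathcal{A}_p})_{t\ge 0}$ and $(\e^{-t\mathcal{A}_p})_{t\ge 0}$, and their concatenation gives a $C_0$-group. The contraction estimates on both sides then force isometry: for every $y\in\HH_p$,
\[
\|y\|_p = \|\e^{-t\mathcal{A}_p}\e^{t\mathcal{A}_p} y\|_p \le \|\e^{t\mathcal{A}_p}y\|_p \le \|y\|_p,
\]
and combined with bijectivity this yields unitarity.

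There is no genuine obstacle here: the substantive work has already been carried out in Lemmas~\ref{invertdf} and~\ref{skew}, and the present lemma is essentially a citation of a classical result. The only small housekeeping point is that since $\mathcal{A}_p$ is skew-adjoint it is automatically closed and densely defined, so both hypotheses of Stone's theorem (or of Lumer-Phillips) are satisfied without further ado.
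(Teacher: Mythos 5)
Your primary argument is exactly the paper's proof: Lemma~\ref{skew} gives skew-adjointness, and Stone's theorem then yields the unitary $C_0$-(semi)group, so the proposal is correct and takes the same approach. The alternative double Lumer--Phillips route you sketch is also sound but is not needed beyond this one-line citation.
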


\begin{proof}
 Since ${\A}_p$ is skew-adjoint, this follows from Stone's theorem \cite[Theorem~II.3.24]{engel}.
\end{proof}

\section*{Acknowledgment}

This research was supported by the FWF-doctoral school ``Dissipation
and dispersion in nonlinear partial differential equations'' and the
FWF-project I395-N16.  Two authors (AA, MM) acknowledge a sponsorship by \emph{Clear Sky Ventures}.


\end{document}